\title[$\tau$-tilting and $\mathbf{g}$-tameness for posets and concealed algebras]{$\tau$-tilting finiteness and $\mathbf{g}$-tameness: Incidence algebras of posets and concealed algebras}
\author[E. D. Børve]{Erlend D. Børve}
\address[E.D.B.]{Institut Fourier, Université Grenoble Alpes, 100 rue des mathématiques, 38610 Gières, France (Current address: Institut für Mathematik und Wissenschaftliches Rechnen, Universität Graz, Mozartgasse 14, 8010 Graz, Austria)}
\email[E.D.B.]{erlend.d.borve@gmail.com}
\thanks{E.D.B. acknowledges support from the French ANR grant CHARMS (ANR-19-CE40-0017-02).}
\thanks{J.F.G. acknowledges support from The Research Council of Norway (RCN 301375).}
\author[J. F. Grevstad]{Jacob Fjeld Grevstad$^{\ast}$}
\address[J.F.G.]{Department of Mathematical Sciences, NTNU, NO-7491 Trondheim, Norway}
\email[J.F.G.]{jacob.f.grevstad@ntnu.no}
\author[E. S. Rundsveen]{Endre S. Rundsveen}
\address[E.S.R.]{Department of Mathematical Sciences, NTNU, NO-7491 Trondheim, Norway}
\address{$^{\ast}$Corresponding author}
\email[E.S.R.]{endre.s.rundsveen@ntnu.no}
\thanks{E.S.R. is funded by internal funds.}
\date{}
\keywords{partially ordered set, incidence algebra, tilting theory, concealed algebra, $\tau$-tilting theory, $\mathbf{g}$-vector fan, $\mathbf{g}$-tame algebra, $\tau$-tilting reduction}
\subjclass[2020]{16G20, 16G60, 16G70}
\begin{document}
\begin{abstract}
    We prove that any $\tau$-tilting finite incidence algebra of a finite poset is representation-finite, and that any $\mathbf{g}$-tame incidence algebra of a finite simply connected poset is tame. As the converse of these assertions are known to hold, we obtain characterizations of $\tau$-tilting finite incidence algebras and $\mathbf{g}$-tame simply connected incidence algebras. Both results are proved using the theory of concealed algebras. The former will be deduced from the fact that tame concealed algebras are $\tau$-tilting infinite, and to prove the latter, we show that wild concealed algebras are not $\mathbf{g}$-tame. We conjecture that any incidence algebra of a finite poset is wild if and only if it is not $\mathbf{g}$-tame, and prove a result showing that there are relatively few possible counterexamples. In the appendix, we determine the representation type of a $\tau$-tilting reduction of a concealed algebra of hyperbolic type.
\end{abstract}
	\maketitle
 \vspace{-1em}
    \tableofcontents
    \newpage
\section{Introduction}
\introheaderOne{Historical notes and motivation}
\introheaderTwo{Representation type.}
Representation theory of finite-dimensional algebras is at its core a tool to better understand algebras. Even though it can be traced back further, its current state was formed by Gabriel \cite{Gab72,Gab73,Gab80} using his language of quivers. The quiver-theoretic manner of approaching algebras provides the means of translating questions of algebraic structures into questions of linear algebra. However, the main selling point was arguably Gabriel's classification of finite-dimensional hereditary algebras with finitely many indecomposable modules in terms of Dynkin diagrams.

The question of how many indecomposable modules an algebra admits, has historically been a driving factor in the study of algebras. The well known Brauer--Thrall conjectures (see e.g. \cite{Jan57,Rin80}) are directly tied to this question. With regards to this question, algebras fall into three main categories of increasing complexity: finite, tame and wild type. Drozd famously showed that any algebra is either tame or wild, but never both \cite{Dro80}. Hence, there is a dichotomy of algebras by representation type. 

Around the same time as Gabriel proved his aforementioned result, the \emph{Kyiv-school} studied and classified poset-representations of finite type \cite{NR72,Kle72,NR75}. This result was highly influential and valuable in classifications of more general algebras, as one could often reduce to the case of posets. The main example of interest for us is Loupias's classification of representation-finite incidence algebras of posets \cite{Lou75,Lou75fr}.  

\introheaderTwo{($\tau$-)tilting.} Inspired by the methods used in Gabriel's classification, a new object was introduced, namely tilting modules \cite{GP72,BGP73,APR79,BB80,HR82}. This gave a method of constructing a new algebra from another while carrying over known structure. It paved the way for a staggering amount of progress, and is still an important tool for modern representation theorists. It is indeed fundamental to the arguments in this paper.

In an effort to close gaps in the theory of tilting when the underlying algebra is not hereditary, Adachi, Iyama and Reiten \cite{AIR14} generalized it to that of $\tau$-tilting theory. Their approach took inspiration from the theory of cluster algebras \cite{FZ02i,FZ02ii,BFZ05,FZ07}, as well as cluster tilting \cite{BMR07}.
By having a bijection with functorially finite torsion classes and providing a module-theoretic interpretation of $2$-term silting objects, $\tau$-tilting was soon accepted as an interesting and worthwhile direction of study.

\introheaderTwo{\g-representation type: $\tau$-tilting finiteness and $\g$-tameness.}
In a similar fashion as indecomposable modules, the amount of $\tau$-rigid indecomposable modules over an algebra has been closely studied. In fact, for hereditary algebras this amount is directly related to the algebras representation type. We say that an algebra is \textit{$\tau$-tilting finite} (or \textit{\g-finite}) if it only admits a finite number of $\tau$-rigid indecomposable modules \cite{DIJ19}. One easily observes that an infinite directed component in the AR-quiver provides infinitely many $\tau$-rigid modules, whence hereditary algebras of infinite representation type are $\tau$-tilting infinite, since they admit two such components (one postprojective and one preinjective).

Further, we say that an algebra is \textit{$\g$-tame} if the $\g$-vector fan is dense in the real Grothendieck group \cite{AY23}. It has been shown that tame representation type is sufficient for $\g$-tameness \cite{PY23}, and for hereditary algebras it \revised{is} in fact also necessary \cite{Hil06}. We do however have examples of $\g$-tame algebras of wild representation type (for instance wild local algebras and certain Jacobian algebras \cite{Yur20}). At the time of writing, the authors are not familiar with a definition of $\g$-wildness of an algebra, apart from simply negating the notion of $\g$-tameness. 

Having, for the hereditary case, two equivalent notions to finite and tame representation type, it is natural to ask how big of a defect we might get for less well-behaved algebras. We look at the slightly more rowdy class of incidence algebras in this paper. Both the classification of minimally representation-infinite incidence algebras by Loupias \cite{Lou75,Lou75fr} and the criterion of tameness for incidence algebras by Leszczyński \cite{Les03} will be prominent in the following.

\introheaderOne{Original results}

\introheaderTwo{$\tau$-tilting finiteness.} Following the motivation outlined above we first set forth on investigating when incidence algebras are $\tau$-tilting finite. The strategy for our investigation relies on three observations. The first being that representation-infinite concealed $\K$-algebras are $\tau$-tilting infinite, see \Cref{lem:concealed_tau_inf}. The second being that the twelve classes of minimal representation infinite incidence algebras provided by Loupias (see \Cref{tab:Loupias frames} on page \pageref{tab:Loupias frames}) are tame concealed, see \Cref{cor:Lou75concealed}. The third and final being that the reduction techniques of Loupias preserves $\tau$-tilting finiteness, see \Cref{lem:Pla19}. 
With this strategy we prove our first main result.

\begin{reptheorem}{thm:tautfiniteposets}
    Let $\K$ be a field and let $(P,\leq)$ be a finite poset. The incidence algebra $\incidencealgebra{\K}{P}$ is $\tau$-tilting finite if and only if it is representation-finite.
\end{reptheorem}

\introheaderTwo{$\g$-tameness.} Inspired by the situation for hereditary algebras and the fact that $\tau$-tilting finite incidence algebras are representation finite, we wondered whether $\g$-tame incidence algebras were tame as well. When the underlying poset is simply connected, we answer this in the affirmative. In the multiply connected case we conjecture that the same will be true, based on some worked examples.

Mirroring the strategy for our first result we develop a means of reduction here as well. We begin by slightly expanding a result of Happel and Unger \cite{HU89} telling us that for a wild concealed algebra we can find a suitable factor algebra which is concealed of \emph{hyperbolic type}, see \Cref{definition:hyperbolic}. 

\begin{reptheorem}{thm:red_to_hyperbolic}
    Let $\K$ be an algebraically closed field.
    If $B$ is a concealed $\K$-algebra of wild type, then there exists an idempotent $e\in B$ such that $B/(e)$ is concealed of hyperbolic type.
\end{reptheorem}

This is significant for us since $\g$-tameness is inherited when passing to factor algebras by idempotent ideals \cite[Proposition 3.11]{PY23} (see \Cref{lem:convidem}\eqref{lem.PY23.3.11}). Following this we show that for hyperbolic hereditary algebras $H$ we can carry information from the $\g$-vector fan over to their concealed algebras, \Cref{lem:negative_cone_and_postprojectives}. This, together with Asai's description of $\g$-vector fans through the wall and chamber structure of $K_0(\proj \Lambda)$ \cite{BT19, Asa21}, allows us to prove the following novel result. 

\begin{reptheorem}{thm:notgtame}
Let $\K$ be an algebraically closed field and let $B$ be a wild concealed $\K$-algebra. 
Then $B$ is not $\mathbf{g}$-tame.
\end{reptheorem}

This result is noteworthy in itself, but for us it is mainly in connection with Leszczyński's classification of wild incidence algebras \cite[Theorem~1.4]{Les03} where it really shines. Specifically, it leads to the following corollary of \Cref{thm:notgtame}.

\begin{repcorollary}{cor:concealed_gvecfan}
    Suppose that the field $\K$ is algebraically closed.
    Let $(P,\leq)$ be a finite simply connected poset. Then the incidence $\K$-algebra $\incidencealgebra{\K}{P}$ is $\mathbf{g}$-tame if and only if it is tame.
\end{repcorollary}

In \Cref{conj:g_tame_poset}, we postulate that this result holds for any incidence algebra of a finite poset. To justfy that our conjecture is a reasonable one, we include \Cref{prop:conj_lim}, where we argue that there are relatively few counterexamples, if any at all.

\introheaderTwo{$\tau$-tilting reduction.} At the end of the paper we include an application of our results. That is, in \Cref{sec:red}, we use \Cref{thm:red_to_hyperbolic} to prove results on the $\tau$-tilting reduction \cite{Jas15} of concealed algebras. Our main result in this appendix will be the following.

 \begin{reptheorem}{prop:MinimalLemJassoRed}
    Let $\K$ be an algebraically closed field, let $Q$ be a hyperbolic quiver and let $B$ be a concealed $\K$-algebra of type $Q$.
    For $(M,\,R)\in\taurigidpair{B}$\revised{, where $M$ is postprojective, }the following assertions hold:
    \begin{enumerate}
    \setcounter{enumi}{1}
        \item If $|M|+|R|\geq 1$, then the $\tau$-tilting reduction of $B$ with respect to $(M,\,R)$ is tame.
        \item If $|M|+|R|\geq 2$, then the $\tau$-tilting reduction of $B$ with respect to $(M,\,R)$ is representation-finite.
    \end{enumerate}
\end{reptheorem}

\introheaderOne{Notation and conventions.} A field $\K$ is fixed throughout. All modules over a finite-dimensional $\K$-algebra will be left modules, as opposed to right modules. For a finite-dimensional $\K$-algebra $\Lambda$, let $\Mod{\Lambda}$ denote the category of left $\Lambda$-modules, and let $\Modf(\Lambda)$ denote the \revised{full} subcategory of finite-dimensional $\Lambda$-modules. Arrows in a quiver are composed from right to left. A \textit{$\K$-linear category} is an enriched category over $\Mod{\K}$. It need not admit direct sums.

\subsection*{Acknowledgements.} We are grateful to Aslak Buan, Eric Hanson, Kaveh Mousavand, Steffen Oppermann, Pierre-Guy Plamondon and Håvard Terland for insightful discussions. \revised{We thank an anonymous reviewer for helpful comments.} E.D.B acknowledges support from the French ANR grant CHARMS (ANR-19-CE400017). J.F.G. acknowledges support from The Research Council of Norway (RCN 301375). Some of our diagrams were made with the help of \url{https://q.uiver.app/}.

\section{Generalities on incidence algebras of posets}
A \textit{partially ordered set}, or \textit{poset} for short, is a tuple $(P,\leq)$, where $P$ is a set and $\leq$ is a binary relation on $P$ which is reflexive, transitive and anti-symmetric. When the binary relation $\leq$ is clear from context, or if the poset in question is arbitrary, we may omit $\leq$ from the notation, so a reference to the underlying set $P$ is meant as reference to the entire structure. 
An \textit{interval} in a poset $P$ is a subposet of the form
\begin{equation*}
    [a,b] \defeq \{x \sth  a\leq x\leq b\} \subseteq P,
\end{equation*}
where $a,b\in P$. We denote the set of non-empty intervals in $P$ by $\intervals{P}$.
A poset is \textit{locally finite} if every interval therein is finite. 

\begin{definition}[{\cite[§3]{Rot64}}]
Let $(P,\leq)$ be a locally finite poset. Denote the set of functions from $\intervals{P}$ to $\K$ by $\K^{\intervals{P}}$, and endow this set with a $\K$-vector space structure using pointwise operations.
The \textit{incidence $\K$-algebra} of $P$ has $\K^{\intervals{P}}$ as the underlying $\K$-vector space, and multiplication
\begin{equation*}
    \begin{tikzcd}
        \K^{\intervals{P}} \otimes_{\K} \K^{\intervals{P}} \arrow[r,"\ast"] & \K^{\intervals{P}} 
    \end{tikzcd}
\end{equation*}
given by $(f\ast g)([a,b]) \defeq \sum\limits_{t\in [a,b]} f([t,b])g([a,t])$. We denote the incidence $\K$-algebra of $P$ by $\incidencealgebra{\K}{P}$.
\end{definition}
It is easy to see that the incidence algebra of $P$ is finite-dimensional precisely when $P$ is finite. Indeed, it is equivalent to the assertion that $\intervals{P}$ is a finite set.
In this case, one can present the $\K$-algebra $\incidencealgebra{\K}{P}$ as the quotient of the free $\K$-algebra generated by $\intervals{P}$ modulo the relations $\{[c,d][a,b] - \delta_{b,c}[a,d] \}$, where $\delta_{\mhyphen,\mhyphen}$ is the Kronecker delta symbol. In the following, all posets we consider will be finite.

We consistently refer to oriented multigraphs as \textit{quivers}. Recall that we compose paths in a quiver from right to left. The \textit{Hasse quiver} of a poset $P$ is the quiver where the vertices are the elements in $P$, and we have an arrow $x\rightarrow y$ if $x\leq y$ and no element lies strictly between $x$ and $y$. We will denote this quiver by $\HasseQuiver{P}$. Letting $I_P$ denote the ideal of the path $\K$-algebra $\K \HasseQuiver{P}$ generated by the relations $\{p_1-p_2\}_{p_1,p_2}$, where we index over all pairs of paths with the same start and endpoints, one easily proves that $\incidencealgebra{\K}{P}$ is isomorphic to the bound path $\K$-algebra $\K \HasseQuiver{P} / I_P$. As a consequence, a left module over the incidence algebra $\incidencealgebra{\K}{P}$ can be regarded as a covariant representation of the bound quiver $(\HasseQuiver{P},I_P)$ \cite[Lemma~2.7]{Lad08}. Finite-dimensional representations of $(Q_P,I_P)$ correspond exactly to finite-dimensional $\incidencealgebra{\K}{P}$-modules. %\todo{End: Et sted før her bør vi tydeliggjøre at vi faktisk kun ser på endelige poset. Erl: La til en setning over.}

\begin{remark}\label{rem:posetreps}
    We will refrain from referring to $\incidencealgebra{\K}{P}$-modules as ``representations of $P$.''
    For many authors, a \textit{representation} of a poset $P$ in a $\K$-vector space $V$ amounts to an order-preserving map from $P$ into the poset of subspaces of $V$ 
    \cite[§8]{Rot64} \cite[§1]{Dro74} \cite{Naz74}. The term \textit{$P$-space} has also been used for this notion of representation. Equivalently, they can be regarded as socle-projective modules of the incidence $\K$-algebra of $P^{\star}$, where $P^{\star}$ is obtained from $P$ by adding a maximal element. The maps in such representations of the bound quiver $(\HasseQuiver{P},I_P)$ are all injective, which we, in contrast, will not require. 
\end{remark}

\begin{example}
    Let $n\geq 1$. A poset is of \textit{type $\qA_n$} if it has $n$ elements and the underlying unoriented graph of its Hasse quiver takes the form
    \begin{equation}\label{eq:An}\tag{$\qA_n$}
    \begin{tikzcd}
        \bullet \arrow[r,no head] & \bullet \arrow[r,no head] & \cdots \arrow[r,no head] & \bullet.
    \end{tikzcd}
    \end{equation}
    The incidence $\K$-algebra of such a poset is isomorphic to the path algebra of the correspondingly oriented $\qA_n$-quiver. Examples include the totally ordered sets, whose Hasser quivers are given by
\begin{equation}\label{eq:n}\tag{$\totallyordered{n}$}
    \begin{tikzcd}[every arrow/.append style={-latex}]
        1 \arrow[r] & 2 \arrow[r] & \cdots \arrow[r] & n.
    \end{tikzcd}
    \end{equation}
    Their incidence $\K$-algebras are isomorphic to path $\K$-algebras of linearly ordered $\qA_n$-quivers. 
\end{example}

Given a connected bound quiver $(Q,I)$, let $N(Q,I)$ be the classifying space of the category defined as the quotient of the free category of $Q$, modulo the ideal identifying two paths $p_1$ and $p_2$ if there exists a minimal relation in $I$ of the form $\sum_{p\in L} \lambda_p p$, where $p_1,p_2\in L$. If the bound quiver $(Q_P,I_P)$ is the bound Hasse quiver of a poset $P$, then $N(Q_P,I_P)$ is simply the classifying space of $P$. A bound quiver $(Q,I)$ is \textit{connected} if $N(Q,I)$ is connected as a topological space. 
We define the \textit{fundamental groupoid} $\Pi_1(Q,I)$ of $(Q,I)$ as the fundamental groupoid of $N(Q,I)$. A \textit{connected component} of $Q$ can be defined as a component of $\Pi_1(Q,I)$. One says that $(Q,I)$ is \textit{simply connected} (resp. \textit{multiply connected}) if it is connected and the fundamental groupoid $\Pi_1(Q,I)$ is equivalent to the trivial groupoid (resp. not equivalent to the trivial groupoid). We say that a finite poset $P$ is \textit{connected} (resp. \textit{simply connected}, resp. \textit{multiply connected}) if the bound quiver $(Q_P,I_P)$ is {connected} (resp. {simply connected}, resp. {multiply connected}). Posets admitting a unique maximal element, or a unique minimal element, are simply connected.

\section{\texorpdfstring{$\tau$}{tau}-tilting finiteness of incidence algebras}

This section is concerned with $\tau$-tilting finiteness of incidence algebras. We begin by recalling basic definitions of $\tau$-tilting theory and concealed algebras in \Cref{sec:2.2tau tilting} and \Cref{sec:2.2.5concealed algebras}. The main result of this section is the following, which will be shown in \Cref{sec:2.3tau tilting finite incidence}.
\begin{reptheorem}{thm:tautfiniteposets}
    Let $P$ be a finite poset. The incidence $\K$-algebra $\incidencealgebra{\K}{P}$ is $\tau$-tilting finite if and only if it is representation-finite.
\end{reptheorem}  

\subsection{\texorpdfstring{$\tau$}{tau}-tilting theory}\label{sec:2.2tau tilting}
$\tau$-tilting theory was developed by Adachi--Iyama--Reiten \cite{AIR14}. We recall the elementary definitions. 

Let $\Lambda$ be a finite-dimensional $\K$-algebra. Recall that a $\Lambda$-module is \textit{basic} if no indecomposable direct summand is repeated in its decomposition into indecomposable direct summands. A pair $(M,P)$ of $\Lambda$-modules is \textit{basic} if both $M$ and $P$ are basic as $\Lambda$-modules.
\begin{definition}\label{def:tautilt}
Let $\Lambda$ be a finite-dimensional algebra over a field $\K$.
\begin{enumerate}
    \item\label{def:tautilt.rigid} A finitely generated left $\Lambda$-module $M$ is said to be \textit{rigid} if $\Ext_{\Lambda}^{1}({M},\,{M})=0$.
    \item A finitely generated left $\Lambda$-module $M$ is said to be \textit{$\tau$-rigid} if $\Hom_{\Lambda}({M},\,{\tau M})=0$, where $\tau$ is the Auslander--Reiten translation of $\Lambda$. The set of isomorphism classes of basic $\tau$-rigid $\Lambda$-modules is denoted by $\taurigid{\Lambda}$.
    \item A pair $(M,\,P)$ is \textit{support $\tau$-rigid} if $M$ is a $\tau$-rigid module and $P$ is a finitely generated projective left $\Lambda$-module such that $\Hom_{\Lambda}({P},\,{M})=0$. The set of isomorphism classes of basic $\tau$-rigid pairs in $\Modf(\Lambda)$ is denoted by $\taurigidpair{\Lambda}$.
    \item A support $\tau$-rigid pair $(M,\,P)$ is \textit{support $\tau$-tilting} if $|M|+|P|=|\Lambda|$, where $|X|$ is the number of non-isomorphic indecomposable direct summands of a left $\Lambda$-module $X$. The set of isomorphism classes of basic support $\tau$-tilting pairs in $\Modf(\Lambda)$ is denoted by $\stautiltpair{\Lambda}$.
    \item A left $\Lambda$-module $M$ is \textit{support $\tau$-tilting} if there exists a projective left $\Lambda$-module $P$ such that $(M,\,P)$ is a support $\tau$-tilting pair. The set of isomorphism classes of basic support $\tau$-tilting $\Lambda$-modules is denoted by $\stautilt{\Lambda}$.
    \item A support $\tau$-rigid pair $(N,\,Q)$ is a \textit{direct summand} of the support $\tau$-rigid pair $(M,\,P)$ if $N$ is a direct summand of $M$ and $Q$ is a direct summand of $P$. 
\end{enumerate}
\end{definition} 

\begin{comment}
If $(M,\,P)$ and $(M,\,Q)$ are support $\tau$-tilting pairs, then $\add(P)=\add(Q)$ \cite[Proposition~2.3(b)]{AIR14}. Consequently, the function 
\begin{equation*}
    \taurigidpair{\Lambda} \rightarrow \taurigid{\Lambda},
\end{equation*} 
truncating a pair $(M,\,Q)$ to $M$, restricts to a bijection\todo{Erl: Mulig at denne bijeksjonen ikke lenger er strengt nødvendig å minne om. Om den er nødvendig, flytt def. av $\add(M)$ opp hit.}
\begin{equation*}
    \stautiltpair{\Lambda} \rightarrow \stautilt{\Lambda}.
\end{equation*} 
\end{comment}

\begin{definition}[{\cite[Definition 1.1]{DIJ19}}]\label{def:tautiltfinite}
    We say that a finite-dimensional algebra $\Lambda$ is \textit{$\tau$-tilting finite} if the set $\stautilt{\Lambda}$ is finite (or equivalently that $\taurigid{\Lambda}$ is finite). If $\Lambda$ is not {$\tau$-tilting finite}, it is said to be \textit{$\tau$-tilting infinite}.
\end{definition}

\subsection{Tilted and concealed algebras}\label{sec:2.2.5concealed algebras}
For a finite-dimensional $\K$-algebra $\Lambda$ and a finite-dimensional $\Lambda$-module $M$, let $\add(M)$ denote the smallest additive subcategory of $\Modf(\Lambda)$ \revised{which is closed under direct summands and} \revised{contains} $M$.

\begin{definition}
    Let $\Lambda$ be a finite-dimensional $\K$-algebra.
   We say that $T\in\Modf(\Lambda)$ is a \textit{tilting $\Lambda$-module} if the following hold:
    \begin{enumerate}
        \item The $\Lambda$-module $T$ is rigid.
        \item There exists a short-exact sequence in $\Modf(\Lambda)$
        \begin{equation*}
            \begin{tikzcd}
                0 \arrow[r] & P^{T}_1  \arrow[r] & P^{T}_0  \arrow[r,] & T  \arrow[r] & 0
            \end{tikzcd}
        \end{equation*}
        where the terms $P^{T}_1$ and $P^{T}_0$ are finite-dimensional projective $\Lambda$-modules.
        \item There exists a short-exact sequence in $\Modf(\Lambda)$
        \begin{equation*}
            \begin{tikzcd}
                0 \arrow[r] & \Lambda  \arrow[r] & {T}_0  \arrow[r] & T_1  \arrow[r] & 0
            \end{tikzcd}
        \end{equation*}
        where the terms $T_0$ and $T_1$ are in $\add(T)$. 
    \end{enumerate}
\end{definition}

Let $Q$ be an acyclic quiver. Recall that if the path $\K$-algebra $\K Q$ is representation-infinite, the set of indecomposable $\K Q$-modules can be partitioned into three distinct classes:
\begin{itemize}
    \item the \textit{postprojective $\K Q$-modules}, namely the $\K Q$-modules $M$ for which there exists an indecomposable finite-dimensional projective $\K Q$-module $P$ and a non-negative integer $i$ such that $M\simeq \tau^{-i}P$.
    \item the \textit{preinjective $\K Q$-modules}, namely the $\K Q$-modules $M$ for which there exists an indecomposable finite-dimensional injective $\K Q$-module $I$ and a non-negative integer $i$ such that $M\simeq \tau^{i}I$.
    \item the \textit{regular $\K Q$-modules}, that are neither postprojective nor preinjective. 
\end{itemize}

\begin{definition}
    Let $\Lambda$ be a finite-dimensional $\K$-algebra.
    \begin{enumerate}
        \item We say that $\Lambda$ \textit{has a postprojective component} if the Auslander--Reiten quiver of $\Modf(\Lambda)$ has an acyclic connected component in which every indecomposable module is isomorphic to a module of the form $\tau^{-i}P$ for some indecomposable finite-dimensional projective $\Lambda$-module $P$ and some non-negative integer $i$.
        \item We say that $\Lambda$ \textit{has a preinjective component} if the Auslander--Reiten quiver of $\Modf(\Lambda)$ has an acyclic connected component in which every indecomposable module is isomorphic to a module of the form $\tau^{i}I$ for some indecomposable finite-dimensional injective $\Lambda$-module $I$ and some non-negative integer $i$.
    \end{enumerate}
\end{definition}

Path $\K$-algebras of quivers admit postprojective and preinjective components.
For each connected component of $Q$ there is a postprojective component in the AR-quiver of $\K Q$, and likewise for the preinjectives. Such a connected component $Q'$ of $Q$ yields an infinite postprojective (resp. preinjective) component if and only if $Q'$ is a representation-infinite quiver.
%\todo{Erl: Det finnes nok referanse på alt dette, men man slipper nok unna uten.}

\begin{definition}
    Let $Q$ be an acyclic quiver. 
    \begin{enumerate}
        \item  A \textit{tilted $\K$-algebra of type $Q$} is isomorphic to $\End_{\K Q}(T)\op$ for some tilting $\K Q$-module $T$.
        \item If $T$ is a postprojective tilting $\K Q$-module, we say that the tilted algebra (and any $\K$-algebra to which it is isomorphic) is a \textit{concealed $\K$-algebra of type $Q$}. 
        \item Let $B = \End_{\K Q}(T)\op$ be a concealed $\K$-algebra of type $Q$. We say that $B$ is \textit{tame concealed} (resp. \textit{wild concealed}) if $Q$ is a tame and representation-infinite (resp. wild) quiver.
    \end{enumerate} 
\end{definition}

Let $Q$ be a representation-infinite acyclic quiver, and let $B$ be a concealed $\K$-algebra of type $Q$.
Then $B$ admits one postprojective component (resp. preinjective component) for every connected component of $Q$, and at least one which contains (countably) infinitely many indecomposable $B$-modules.

The following lemma is well known and immediately proved (cf. \cite[Remark~2.9]{Mou23}).
\begin{lemma}\label{lem:concealed_tau_inf}
    Let $B$ be a tame concealed or a wild concealed $\K$-algebra. Then $B$ is $\tau$-tilting infinite. 
\end{lemma}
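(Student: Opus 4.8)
The plan is to produce infinitely many pairwise non-isomorphic indecomposable $\tau$-rigid $\Lambda$-modules; by \Cref{def:tautiltfinite} this forces $\taurigid{\Lambda}$ to be infinite, so that $\Lambda$ is $\tau$-tilting infinite. Since $\Lambda$ is tame concealed or wild concealed, it is a concealed $\K$-algebra of type $Q$ for some representation-infinite acyclic quiver $Q$. By the paragraph immediately preceding the statement, the Auslander--Reiten quiver of $\Modf(\Lambda)$ then possesses a postprojective component $\mathcal{P}$ containing countably infinitely many pairwise non-isomorphic indecomposable modules, each isomorphic to one of the form $\tau^{-i}P$ for an indecomposable finite-dimensional projective $P$ and some integer $i\geq 0$.

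The heart of the argument is to show that every indecomposable $M\in\mathcal{P}$ is $\tau$-rigid. First I would record that $\mathcal{P}$ is a \emph{directed} component: it contains no oriented cycles, and its indecomposables are directing, so that for $X,Y\in\mathcal{P}$ a nonzero morphism $X\to Y$ forces the existence of a path $X\rightsquigarrow Y$ in the Auslander--Reiten quiver. If $M$ is projective, then $\tau M=0$ and $\Hom_{\Lambda}(M,\tau M)=0$ trivially. Otherwise $\tau M$ again lies in $\mathcal{P}$, and the almost split sequence ending in $M$ supplies a path $\tau M\rightsquigarrow M$. Were $\Hom_{\Lambda}(M,\tau M)$ nonzero, the directing property would also yield a path $M\rightsquigarrow\tau M$; composing the two paths produces an oriented cycle through $M$, contradicting that $\mathcal{P}$ is directed. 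Hence $\Hom_{\Lambda}(M,\tau M)=0$, i.e. $M$ is $\tau$-rigid.

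Putting this together, each of the infinitely many indecomposables of $\mathcal{P}$ is a basic $\tau$-rigid $\Lambda$-module, so $\taurigid{\Lambda}$ is infinite and $\Lambda$ is $\tau$-tilting infinite, as claimed. The only delicate point, and the step I expect to require the most care, is the directedness and directing property of the postprojective component $\mathcal{P}$; for concealed algebras this follows from the fact that postprojective components are standard (the infinite radical vanishes between their modules), equivalently one may transport the statement across the tilting functor from the postprojective $\K Q$-modules, where the analogous $\tau$-rigidity of directing modules is classical.
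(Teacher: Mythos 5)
Your proposal is correct and follows essentially the same route as the paper: both extract a countably infinite family of $\tau$-rigid indecomposables from an infinite postprojective component, using acyclicity (directedness) of that component to rule out nonzero maps $M\to\tau M$. You simply spell out the "follows easily from acyclicity" step that the paper leaves implicit, correctly flagging that the directing/standardness property of postprojective components is the point requiring justification.
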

\begin{proof}
    Tame concealed and wild concealed $\K$-algebras admit at least one infinite postprojective (and at least one infinite preinjective) component. 
    It follows easily from directedness that the indecomposable modules constituting postprojective or preinjective components are all $\tau$-rigid.
    Thus, the postprojective (or preinjective) component of $B$ provides a (countably) infinite set of $\tau$-rigid $B$-modules.
\end{proof}

Let $\Lambda$ be a finite dimensional algebra. For a full subcategory $\mathcal{X}$ of $\Modf(\Lambda)$, let $\Gen\mathcal{X}$ denote the smallest full subcategory of $\Modf(\Lambda)$ containing $\mathcal{X}$ which is closed under factor modules, and let $\mathcal{X}^{\perp}$ (resp. ${^{\perp}\mathcal{X}}$) denote the full subcategory of $\Modf(\Lambda)$ spanned by the $\Lambda$-modules $Y$ for which $\Hom_{\Lambda}(X,Y)=0$ (resp. $\Hom_{\Lambda}(Y,X)=0$) for all $X\in\mathcal{X}$. When $\mathcal{X}$ is of the form $\add(T)$ for some $\Lambda$-module $T$, we simplify the notation to $\Gen{T}$ and $T^{\perp}$, respectively.
Recall that a \textit{torsion class} (resp. \textit{torsion-free class}) of $\Modf(\Lambda)$ is a full subcategory of $\Modf(\Lambda)$ which is closed under factor modules (resp. submodules) and extensions. A \textit{torsion pair} in $\Modf(\Lambda)$ is a pair $(\mathcal{T},\,\mathcal{F})$ of full subcategories of $\Modf(\Lambda)$ such that $\mathcal{F} = \mathcal{T}^{\perp}$ and $\mathcal{T} = {^{\perp}\mathcal{F}}$. In a torsion pair $(\mathcal{T},\,\mathcal{F})$, we have that $\mathcal{T}$ is a torsion class and that $\mathcal{F}$ is a torsion-free class \cite{Dic66}. A torsion pair $(\mathcal{X},\,\mathcal{Y})$ in $\Modf(\Lambda)$ is \textit{split} if all indecomposable finite-dimensional $\Lambda$-modules are either in $\mathcal{X}$ or in $\mathcal{Y}$.

Tilting theory lets us compute the module categories of tilted $\K$-algebras. This is thanks to the Brenner--Butler Theorem, namely the following.
\begin{theorem}[Brenner--Butler equivalences, {\cite{BB80,Bon81,HR82}}]\label{setup:concealed}
    Let $Q$ be an acyclic quiver, and let $T$ be a tilting $\K Q$-module. Let $\Lambda=\End_{\K Q}(T)\op$ denote the {tilted algebra} obtained from $T$.
    There is a torsion pair $(\mathcal{T}_T,\,\mathcal{F}_T)\defeq (\Gen{T},\,T^{\perp})$ in $\mod{\K Q}$, and a split torsion pair $(\mathcal{X}_T,\,\mathcal{Y}_T)$ in $\Modf(\Lambda)$, such that the following functors are mutually inverse additive equivalences:
    \[
    \begin{tikzpicture}
        \node (A) at (0,0) []{$\cT_T$};
        \node (B) at (2,0) [] {$\cY_T$};
        \draw[->] (A) to[out=30,in=150] node[midway,above,scale=.7]{$\Hom_{\K Q}(T,\,-)$} (B);
        \draw[->] (B) to[out=210,in=-30] node[midway,below,scale=.7]{$T\otimes_{\Lambda}-$} (A);
        
        \node (a) at (3.5,0) [] {and};
        \node (C) at (5,0) [] {$\cF_T$};
        \node (D) at (7,0) [] {$\cX_T$.};
        \draw[->] (C) to[out=30,in=150] node[midway,above,scale=.7]{$\Ext_{\K Q}^1(T,\,-)$} (D);
        \draw[->] (D) to[out=210,in=-30] node[midway,below,scale=.7]{$\Tor_1^\Lambda(T,\,-)$} (C);
    \end{tikzpicture}
    \]
\end{theorem}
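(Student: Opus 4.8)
The statement is the classical Brenner--Butler tilting theorem, so the plan is to follow the standard module-theoretic route, specialised to the hereditary situation. Write $A = \K Q$; since $A$ is hereditary, every tilting $A$-module has projective dimension at most $1$, so $\Ext_A^i(T,-)$ and $\Tor_i^\Lambda(T,-)$ vanish for all $i \geq 2$ and only the degrees $0$ and $1$ play a role. I would regard $T$ as an $A$-$\Lambda$-bimodule via the right action of $\Lambda = \End_A(T)$; this makes $T \otimes_\Lambda -$ left adjoint to $\Hom_A(T,-)$ between $\Modf(\Lambda)$ and $\mod A$, with a unit and counit to be analysed below.

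First I would record the two torsion pairs. On the $A$-side one checks that $\Gen T = \{M : \Ext_A^1(T,M) = 0\}$ — the nontrivial inclusion uses that $T$ is tilting, via its two defining short exact sequences — so that $(\mathcal{T}_T, \mathcal{F}_T) = (\Gen T, T^\perp)$ is the torsion pair $(\{M : \Ext_A^1(T,M)=0\},\, \{M : \Hom_A(T,M)=0\})$. On the $\Lambda$-side I would set $\mathcal{Y}_T = \{N : \Tor_1^\Lambda(T,N) = 0\}$ and $\mathcal{X}_T = \{N : T \otimes_\Lambda N = 0\}$, and verify directly that $(\mathcal{X}_T, \mathcal{Y}_T)$ is a torsion pair in $\Modf(\Lambda)$, with $\mathcal{X}_T$ torsion and $\mathcal{Y}_T$ torsion-free.

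The heart of the argument is the unit/counit computation. Applying the adjoint functors to the two defining short exact sequences of $T$ and reading off the resulting long exact sequences — invoking the vanishing in degrees $\geq 2$ at each step — I would establish the tilting identities: for $M \in \mathcal{T}_T$ the counit $T \otimes_\Lambda \Hom_A(T,M) \to M$ is an isomorphism and $\Hom_A(T,M) \in \mathcal{Y}_T$; dually, for $N \in \mathcal{Y}_T$ the unit $N \to \Hom_A(T, T\otimes_\Lambda N)$ is an isomorphism and $T \otimes_\Lambda N \in \mathcal{T}_T$. Taken together these say exactly that $\Hom_A(T,-)$ and $T \otimes_\Lambda -$ are mutually inverse equivalences $\mathcal{T}_T \simeq \mathcal{Y}_T$. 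The entirely analogous computation, now tracking the degree-$1$ derived functors fed by the same two sequences, shows that $\Ext_A^1(T,-)$ and $\Tor_1^\Lambda(T,-)$ restrict to mutually inverse equivalences $\mathcal{F}_T \simeq \mathcal{X}_T$.

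It remains to see that $(\mathcal{X}_T, \mathcal{Y}_T)$ is \emph{split}, i.e. that every indecomposable $\Lambda$-module lies in $\mathcal{X}_T$ or in $\mathcal{Y}_T$; equivalently $\Ext_\Lambda^1(\mathcal{Y}_T, \mathcal{X}_T) = 0$. Transporting this extension group back across the equivalences (objects of $\mathcal{Y}_T$ coming from $\mathcal{T}_T$ in degree $0$ and objects of $\mathcal{X}_T$ from $\mathcal{F}_T$ in degree $1$) expresses it as $\Ext_A^2(M, M')$, which vanishes because $A = \K Q$ is hereditary. I expect the main obstacle to be the bookkeeping in the unit/counit step: one must apply the non-exact functors $\Hom_A(T,-)$ and $T \otimes_\Lambda -$ to the tilting sequences, splice the resulting long exact sequences, and cancel the higher derived terms at precisely the right positions. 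The hereditary hypothesis is exactly what keeps these sequences short enough to control, and it is also what ultimately forces the splitting.
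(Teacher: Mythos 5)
The paper states this theorem as a classical result with citations to Brenner--Butler, Bongartz and Happel--Ringel and gives no proof of its own, so there is no internal argument to compare against. Your sketch correctly reproduces the standard proof from those references: the identification $\Gen T=\{M \mid \Ext^1_{\K Q}(T,M)=0\}$, the unit/counit computations yielding the two pairs of mutually inverse equivalences, and the splitting of $(\mathcal{X}_T,\mathcal{Y}_T)$ via $\Ext^1_\Lambda(Y,X)\cong\Ext^2_{\K Q}(M,M')=0$ for $A=\K Q$ hereditary.
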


An immediate corollary of \Cref{setup:concealed} is the following: if $Q$ is a Dynkin quiver, then any tilted $\K$-algebra of type $Q$ is representation-finite.

\subsection{\texorpdfstring{$\tau$}{tau}-tilting finite incidence algebras are representation-finite} \label{sec:2.3tau tilting finite incidence}
Recall that a finite-dimensional $\K$-algebra $\Lambda$ is called \textit{representation-finite} if there are only finitely many isomorphism classes of indecomposable left $\Lambda$-modules.

\begin{remark}\label{rem:repfin_allfields}
For incidence algebras of posets, the choice of field does not matter when it comes to representation-finiteness. Indeed, the incidence $\K$-algebra $\incidencealgebra{\K}{P}$ of a poset $P$ is representation-finite for some field $\K$ precisely when it is representation-finite for all fields $\K$. This is shown by setting up a correspondence between posets with representation-finite incidence $\K$-algebras and hammocks \cite[Corollary 4]{RV87}, the latter being independent of the field $\K$.
\end{remark}

Loupias devises two reduction procedures of incidence algebras that preserve representation-finiteness \cite[Propositions 1.2 and 1.3]{Lou75}. 
Given a poset $P$, a subposet of $P$ is also called a \textit{subtraction} of $P$.
A poset $L$ is a \textit{contraction} of $P$ if there exists a surjective order-preserving map $P\rightarrow L$ with connected fibers. An \textit{elementary contraction} $c\colon P\to R$ is a contraction for which there exists exactly one element $x\in R$ such that $|c^{-1}(x)|=2$ and $|c^{-1}(r)|=1$ for all $r\in R\setminus \{x\}$. Informally, an elementary contraction identifies two adjacent elements in $P$. Thus, there are as many elementary contractions as there are arrows in the Hasse quiver of $P$. 
It can be shown that all contractions are composites of elementary contractions.

\begin{comment}
\begin{lemma}\label{lem:elementarycontractions}
    Every contraction of finite posets can be written as a composite of elementary contractions.
\end{lemma}
\begin{proof}
    Let $c\colon P\to Q$ be a contraction. We define a non-negative integer 
    \[w(c) = \sum_{q\in Q}|c^{-1}(q)|-1. \]
    Clearly, we have that $w(c)=0$ if and only if $c$ is an identity map, and $w(c)=1$ if and only if $c$ is elementary. 
    
    We prove our claim by induction on $w(c)$. The anchor steps $w(c)=0$ and $w(c)=1$ are trivial to establish. Let $\ell\geq 2$. Suppose that we can prove the claim for all contractions $d$ with $w(d)<\ell$ and let $c$ be a contraction such that $w(c)=\ell$. Fix an element $q\in Q$ with $|c^{-1}(q)|>1$. We define a poset $Q'$ whose underlying set is $(Q\setminus \{q\})\sqcup |c^{-1}(q)|$ in which $x\leq y$ if 
    \begin{itemize}
        \item $x,y\in Q\setminus \{q\}$ or $x,y\in c^{-1}(q)$,
        \item $x\leq q$ and $y\in c^{-1}(q)$,
        \item or $x\in c^{-1}(q)$ and $y\geq q$.
    \end{itemize}
    We can then factorize $c$ as $P\to^{c_1} Q'\to^{c_2} Q$ where $w(c_1)$ and $w(c_2)$ are less than $\ell$. The proof has been completed by the principle of induction. 
\end{proof}
\end{comment}

We say that a poset $P$ \textit{can be reduced to $L$} if it is the case that $L$ can be obtained from $P$ by subtraction and contraction in a finite number of steps. We may assume that each step either removes a single vertex or contracts an arrow in the Hasse quiver.

If $L$ is a subposet of $P$, one sets up a $\K$-algebra isomorphism 
\begin{equation*}
    \incidencealgebra{\K}{L} \simeq e\incidencealgebra{\K}{P}e,
\end{equation*}
where $e$ is the idempotent $\sum_{i \in L}e_i$ and the terms $e_i$ in the sum are primitive idempotents corresponding to the elements in $L$. Consequently, we have a fully faithful functor
\begin{equation}\label{eq:fullyfaithful_subtr}
    \Modf(\incidencealgebra{\K}{L})\simeq \Modf(e\incidencealgebra{\K}{P}e)\xhookrightarrow{\Hom_{e\Lambda e}({e\Lambda},\,{-})} \Modf(\incidencealgebra{\K}{P}).
\end{equation}
If $R$ is a contraction of $P$, we have, by definition, an epimorphism of posets $P\twoheadrightarrow R$. 
Since the module category $\Modf(\incidencealgebra{\K}{P})$ is equivalent to the functor category $\Modf(\K)^{P}$, and similarly $\Modf(\incidencealgebra{\K}{R})\cong \Modf(\K)^{R}$, one can induce a  functor
\begin{equation}\label{eq:fullyfaithful_contr}
   \Modf(\incidencealgebra{\K}{R})\longhookrightarrow\Modf(\incidencealgebra{\K}{P})
\end{equation}
directly from the map $P\twoheadrightarrow R$. Since this is an epimorphism with connected fibers, it follows that the functor \eqref{eq:fullyfaithful_contr} is fully faithful \cite[Proposition 8]{Lou75fr}. 
The existence of the fully faithful functors displayed in \eqref{eq:fullyfaithful_subtr} and \eqref{eq:fullyfaithful_contr} is all one needs in order to prove the following lemma.
\begin{lemma}[{\cite[Propositions 1.2 and 1.3]{Lou75}}]\label{lem:Lou75}
    Let $P$ be a poset such that the incidence $\K$-algebra $\incidencealgebra{\K}{P}$ is representation-finite.
    \begin{enumerate}
        \item\label{lem:Lou75.1} Any subposet of $P$ has a representation-finite incidence $\K$-algebra.
        \item\label{lem:Lou75.2} Any contraction of $P$ has a representation-finite incidence $\K$-algebra.
    \end{enumerate}
\end{lemma}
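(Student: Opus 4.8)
The plan is to establish both statements by transporting the representation-finiteness of $\incidencealgebra{\K}{P}$ to the relevant smaller incidence algebra using the fully faithful functors recorded in \eqref{eq:fullyfaithful_subtr} and \eqref{eq:fullyfaithful_contr}, together with the standard fact that a fully faithful $\K$-linear functor $F\colon \Modf(A)\to \Modf(B)$ preserves indecomposability and reflects isomorphisms, hence embeds the indecomposable $A$-modules into the indecomposable $B$-modules up to isomorphism. Since $\incidencealgebra{\K}{P}$ is assumed representation-finite, the target category $\Modf(\incidencealgebra{\K}{P})$ has only finitely many isomorphism classes of indecomposables, and so the source must as well.

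For part \ref{lem:Lou75.1}, let $L$ be a subposet of $P$ and let $e=\sum_{i\in L}e_i$ be the corresponding sum of primitive idempotents, so that $\incidencealgebra{\K}{L}\simeq e\incidencealgebra{\K}{P}e$. First I would invoke the functor in \eqref{eq:fullyfaithful_subtr}, which is fully faithful. A fully faithful additive functor sends non-isomorphic indecomposables to non-isomorphic indecomposables: indecomposability is detected by the (non-)existence of non-trivial idempotents in the endomorphism ring, and full faithfulness gives a ring isomorphism $\End_{\incidencealgebra{\K}{L}}(M)\cong \End_{\incidencealgebra{\K}{P}}(F M)$, so $M$ is indecomposable precisely when $FM$ is; similarly $FM\simeq FN$ forces $M\simeq N$. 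Thus the assignment $M\mapsto FM$ is an injection from isomorphism classes of indecomposable $\incidencealgebra{\K}{L}$-modules into those of $\incidencealgebra{\K}{P}$-modules, and finiteness of the latter yields finiteness of the former.

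For part \ref{lem:Lou75.2}, let $R$ be a contraction of $P$, witnessed by the epimorphism of posets $P\twoheadrightarrow R$ with connected fibers. The argument is formally identical: the induced functor in \eqref{eq:fullyfaithful_contr} is fully faithful by \cite[Proposition 8]{Lou75fr} (this is exactly where connectedness of the fibers is used), so the same endomorphism-ring isomorphism argument embeds the indecomposable $\incidencealgebra{\K}{R}$-modules into the indecomposable $\incidencealgebra{\K}{P}$-modules, and representation-finiteness descends.

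The substantive input, namely that the two functors are fully faithful, is already supplied by the discussion preceding the lemma, so the only remaining work is the elementary categorical observation that a fully faithful additive functor between module categories reflects indecomposability and isomorphism classes. I expect this step to be the main (but very mild) obstacle: one must be slightly careful to note that the functor in \eqref{eq:fullyfaithful_subtr}, being $\Hom_{e\Lambda e}(e\Lambda,-)$, is additive and $\K$-linear so that it induces the claimed endomorphism-ring isomorphisms, and likewise for \eqref{eq:fullyfaithful_contr}. With that in hand both statements follow immediately, so the proof is short and the two parts run in complete parallel.
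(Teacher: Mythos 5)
Your proposal is correct and follows exactly the route the paper intends: the text immediately preceding the lemma states that the existence of the fully faithful functors \eqref{eq:fullyfaithful_subtr} and \eqref{eq:fullyfaithful_contr} is all one needs, and your elaboration (full faithfulness induces isomorphisms of endomorphism rings, hence preserves indecomposability and reflects isomorphism classes, so representation-finiteness descends) is precisely the omitted elementary step. No gaps.
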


\begin{example}\label{ex:AnDntilde}
    Let $P$ be a poset of \emph{type \ref{Antilde}}, where $n\geq 3$, namely a poset with Hasse quiver as shown below, where the undirected arrows may go in either direction:
\begin{equation}\label{Antilde}\tag{$\qAtilde_n$}
    \begin{tikzpicture}[tips=proper,scale=.8,baseline=(current  bounding  box.center)]
        \node[nodeDots] (A) at (0,0)  {};
        \node[left=2pt of A] [scale=.85] {$m_1$};
        \node[nodeDots] (B) at (1,1) {};
        \node[above=2pt of B] [scale=.85] {$a_1$};
        \node (C) at (2.5,1) []{\tikzdots};
        \node[nodeDots] (D) at (4,1) {};
        \node[above=2pt of D] [scale=.85] {$a_l$};
        \node[nodeDots] (E) at (5,0) {};
        \node[right=2pt of E] [scale=.85] {$m_2$};
        \node[nodeDots] (F) at (4,-1) {};
        \node[below=2pt of F] [scale=.85] {$a_{l+1}$};
        \node (G) at (2.5,-1) []{\tikzdots};
        \node[nodeDots] (H) at (1,-1) {};
        \node[below=2pt of H] [scale=.85] {$a_{n-1}$};
        
        \draw[-latex, shorten <=5pt, shorten >=5pt] 
            {(A.center) edge (B.center)}
            {(A.center) edge (H.center)}
            {(E.center) edge (D.center)}
            {(E.center) edge (F.center)}
        ;
        \draw[shorten <=5pt, shorten >=5pt]
            {(B.center) edge[shorten >=0pt] (C.west)}
            {(C.east)   edge[shorten <=0pt] (D.center)}
            
            {(H.center) edge[shorten >=0pt] (G.west)}
            {(G.east)   edge[shorten <=0pt] (F.center)}
        ;
    \end{tikzpicture}
    \end{equation}
    As a matter of definition, posets of type \ref{Antilde} have circular Hasse quivers.
Note that there are at least two minimal elements, denoted here by $m_1$ and $m_2$. Such posets can be contracted onto a poset of type \ref{egA3tilde}, namely a poset of the following form:
\begin{equation}\label{egA3tilde}\tag{$\qAtilde_3$}
    \begin{tikzpicture}[tips=proper,scale=.85, baseline=(current bounding  box.center)]
    \node[nodeDots] (A) at (0,0) {};
    \node[left=2pt of A] [scale=.85] {$m_1$};
    \node[nodeDots] (B) at (1,1) {};
    \node[above=2pt of B] [scale=.85] {$\{a_1,\,\ldots,\,a_l\}$};
    \node[nodeDots] (C) at (2,0) {};
    \node[right=2pt of C] [scale=.85] {$m_2$};
    \node[nodeDots] (D) at (1,-1) {};
    \node[below=2pt of D] [scale=.85] {$\{a_{l+1},\,\ldots,\,a_{n-1}\}$};

    \draw[-latex, shorten <=5pt, shorten >=5pt] 
        {(A.center) edge (B.center)}
        {(A.center) edge (D.center)}
        {(C.center) edge (B.center)}
        {(C.center) edge (D.center)}
    ;
\end{tikzpicture}
\end{equation}
Similarly, a poset of \emph{type \ref{Dntilde}}, whose underlying undirected graph is of the form
\begin{equation}\label{Dntilde}\tag{$\qDtilde_n$}
    \begin{tikzpicture}[ baseline=(current bounding  box.center)]
    \node[nodeDots, label=left:{$a_1$}] (A1) at (0,.5) {};
    \node[nodeDots, label=left:{$a_2$}] (A2) at (0,-.5) {};
    \node[nodeDots, label=below:{$d_1$}] (D1) at (1.5,0) {};
    \node (dots) at (3,0) [] {\tikzdots};
    \node[nodeDots, label=below:{$d_{n-3}$}] (Dn-3) at (4.5,0) {};
    \node[nodeDots, label=right:{$b_1$}] (B1) at (6,.5) {};
    \node[nodeDots, label=right:{$b_2$}] (B2) at (6,-.5) {};

    \draw[shorten <=5pt, shorten >=5pt]
        {(A1.center) edge (D1.center)}
        {(A2.center) edge (D1.center)}
        
        {(D1.center) edge[shorten >=0pt] (dots.west)}
        {(dots.east) edge[shorten <=0pt] (Dn-3.center)}
    
        {(Dn-3.center) edge (B1.center)}
        {(Dn-3.center) edge (B2.center)}
    ;
    \end{tikzpicture}
\end{equation}
where $n\geq 4$, can be contracted onto a poset of type \ref{egD4tilde}, namely a poset for which the underlying unoriented graph of the Hasse quiver admits the following shape:
\begin{equation}\label{egD4tilde}\tag{$\qDtilde_4$}
    \begin{tikzpicture}[baseline=(current bounding  box.center)]
        \node[nodeDots, label=left:{$a_1$}] (A1) at (0,.5) {};
        \node[nodeDots, label=left:{$a_2$}] (A2) at (0,-.5) {};
        \node[nodeDots] (D) at (1.5,0){};
        \node at (D) [yshift=-.5cm,scale=.7]{$\{d_1,\,\ldots,\,d_{n-3}\}$};
        \node[nodeDots, label=right:{$b_1$}] (B1) at (3,.5) {};
        \node[nodeDots, label=right:{$b_2$}] (B2) at (3,-.5) {};
        
        \draw[shorten <=5pt, shorten >=5pt] 
            (A1) edge (D) 
            (A2) edge (D)
            (D) edge (B1)
            (D) edge (B2)
        ;
    \end{tikzpicture}
\end{equation}
\end{example}

We say that an incidence $\K$-algebra of a poset is \textit{minimally representation-infinite} if it is representation-infinite and all of its proper subposets and non-trivial contractions have representation-finite incidence $\K$-algebras. Now that the reduction techniques in \Cref{lem:Lou75} are established, it is clear that all representation-infinite incidence $\K$-algebras can be reduced to a minimal one, and that it suffices to classify the minimally representation-infinite incidence $\K$-algebras in order to distinguish representation-finiteness from representation-infiniteness. 
 \begin{theorem}[{\cite[Theorems 1.4 and 1.5]{Lou75}}]\label{thm:Lou75}
    A finite poset $P$ has a representation-finite incidence $\K$-algebra if and only if it cannot be reduced to a poset whose Hasse quiver is taken from the list in \Cref{tab:Loupias frames}, nor from the list of the opposites of these. In other words, any minimally representation-infinite incidence $\K$-algebra falls into one of these twelve types.
    
\captionsetup[table]{skip=10pt}
\begin{table}[ht!]
    \centering

    \begin{tabular}{cc|cc}
         $\qAtilde_3$& \QuiverAthreeDraw & $\qDtilde_4$  & \QuiverDfourDraw \\[3pt] \hline
         &&& \\[-6pt]
         $\qEtilde_6$& \QuiverEsixDraw & $\qEtilde_7$ & \QuiverEsevenDraw \\[3pt] \hline
         &&& \\[-6pt]
         $\qEtilde_8$& \QuiverEeightDraw & $\qR_1$ & \QuiverRoneDraw \\[3pt]  \hline
         &&& \\[-6pt]
         $\qR_2$& \QuiverRtwoDraw & $\qR_3$ & \quiverRthreeDraw \\[3pt] \hline
         &&& \\[-6pt]
         $\qR_4$& \QuiverRfourDraw & $\qR_5$ & \QuiverRfiveDraw \\[3pt] \hline
         &&& \\[-6pt]
         $\qR_6$ & \QuiverRsixDraw & $\qR_7$ &  \QuiverRsevenDraw \\[3pt] 
    \end{tabular}
    \caption{Hasse quivers of posets having minimally representation-infinite incidence $\K$-algebras. Undirected edges indicate that the arrows may go in either direction.}
    \label{tab:Loupias frames}
\end{table}
\end{theorem}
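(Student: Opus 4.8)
The plan is to reduce the statement to a classification of the minimally representation-infinite incidence algebras, and then to identify these by means of the Tits quadratic form. By \Cref{lem:Lou75}, both subtraction and contraction preserve representation-finiteness; hence any poset with representation-infinite incidence algebra admits a reduction to a minimally representation-infinite one, while conversely any poset that reduces to a representation-infinite poset is itself representation-infinite. Combined with \Cref{rem:repfin_allfields}, which lets us fix an algebraically closed field, this shows the theorem is equivalent to the assertion that the twelve frames of \Cref{tab:Loupias frames} together with their opposites exhaust the minimally representation-infinite incidence algebras.

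For the identification I would use the Tits quadratic form $q_P$ of $\incidencealgebra{\K}{P}$, read off from the Euler form of the bound quiver $(\HasseQuiver{P},I_P)$. Bongartz's criterion asserts that a simply connected algebra is representation-finite exactly when $q_P$ is weakly positive, i.e. $q_P(\mathbf d)>0$ for every nonzero $\mathbf d\in\mathbb{Z}_{\geq 0}^{|P|}$. For a multiply connected poset---such as one of type $\qAtilde_n$, whose Hasse quiver contains a cycle---one first passes to the universal cover through covering theory: $\incidencealgebra{\K}{P}$ is representation-finite precisely when its universal cover is locally representation-finite, which returns us to the simply connected setting. Under this dictionary, a minimally representation-infinite incidence algebra is a poset whose Tits form is not weakly positive while the form attached to every proper subtraction and contraction is weakly positive.

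The decisive input is the classification of the borderline forms: the \emph{critical} unit forms---those failing weak positivity although every proper restriction does not---are exactly the connected positive semidefinite unit forms of corank one, with sincere positive radical generator, and these are precisely the Tits forms of the tame concealed algebras of the Happel--Vossieck list. Translating this back to posets, and keeping track of the fact that subtraction removes a vertex whereas contraction identifies two comparable elements, a finite combinatorial enumeration of the posets realizing such forms yields precisely the twelve frames of \Cref{tab:Loupias frames}: the extended Dynkin shapes $\qAtilde_3,\qDtilde_4,\qEtilde_6,\qEtilde_7,\qEtilde_8$ and the seven exceptional tame concealed frames $\qR_1,\dots,\qR_7$. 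The opposites enter because $\incidencealgebra{\K}{P\op}\cong\incidencealgebra{\K}{P}\op$ shares the representation type of $\incidencealgebra{\K}{P}$. I expect this enumeration to be the principal obstacle, as it demands a complete and non-redundant case analysis over all admissible Hasse-quiver shapes, and this is the combinatorial heart of Loupias' original argument.

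To conclude, I would verify directly that each of the twelve frames is representation-infinite---its Tits form is semidefinite of corank one, hence not weakly positive---and that every proper subtraction or contraction of it yields a weakly positive form, hence is representation-finite. This finite verification establishes the minimality of each frame and closes both directions of the equivalence.
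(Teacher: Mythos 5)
First, a point of comparison: the paper does not prove \Cref{thm:Lou75} at all. It is imported verbatim from Loupias, and the surrounding text only sets up the reduction machinery (\Cref{lem:Lou75}) and later verifies, via the Happel--Vossieck list, that the twelve frames are tame concealed (\Cref{cor:Lou75concealed}). There is therefore no internal proof to compare against; what you have written is an attempted proof of the external classification itself.

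As such a proof, your proposal has a genuine gap: everything hinges on the ``finite combinatorial enumeration of the posets realizing such forms,'' which you explicitly defer. For a classification theorem this enumeration \emph{is} the theorem; without it you have only traded one classification problem for another (that of critical unit forms / tame concealed algebras) plus an unperformed case analysis. The reduction is also not quite tight. Criticality of the Tits form in Ovsienko's sense controls minimality under \emph{restriction}, i.e.\ under subtraction of vertices, but contraction does not correspond to restricting the unit form: a poset can have a critical Tits form and every proper subposet representation-finite, yet still contract onto a smaller representation-infinite poset, and hence fail to be minimal in Loupias' sense. Indeed many frames on the Happel--Vossieck list are incidence algebras of posets with critical Tits form but do not appear among the twelve for exactly this reason, so a second, separate elimination argument for contractions is required, which you mention only in passing. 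Finally, Bongartz's weak-positivity criterion is a statement about simply connected algebras, and your appeal to the universal Galois cover for multiply connected posets needs the standardness caveat (the equivalence between representation-finiteness of the algebra and local representation-finiteness of its cover is delicate in characteristic 2); \Cref{rem:repfin_allfields} can be invoked to sidestep this, but that step should be made explicit. The overall strategy --- reduce to minimal frames, identify them via critical forms, match against Happel--Vossieck --- is a reasonable modern route to Loupias' theorem, but as written it is a roadmap rather than a proof.
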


\begin{corollary}\label{cor:Lou75concealed}
    A finite poset $P$ has a representation-infinite incidence $\K$-algebra if and only if it can be reduced to a poset $L$ for which the incidence algebra $\incidencealgebra{\K}{L}$ is tame concealed. 
\end{corollary}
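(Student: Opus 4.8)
The plan is to deduce the statement from Loupias' classification (\Cref{thm:Lou75}), the only new ingredient being that each of the twelve Hasse quivers in \Cref{tab:Loupias frames}, as well as each of their opposites, yields a tame concealed incidence algebra. By \Cref{thm:Lou75}, the algebra $\incidencealgebra{\K}{P}$ is representation-infinite exactly when $P$ can be reduced to a poset $L$ whose Hasse quiver occurs in \Cref{tab:Loupias frames} or is the opposite of such a quiver; so once I know that every such $L$ has $\incidencealgebra{\K}{L}$ tame concealed, the forward implication of the corollary is immediate, with $L$ as the required target of the reduction.

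For the converse implication I would argue contrapositively using only soft facts. A tame concealed algebra is by definition concealed of tame representation-infinite type, and hence representation-infinite, since it carries an infinite postprojective component (cf. the discussion preceding \Cref{lem:concealed_tau_inf}). Suppose now that $P$ reduces to $L$ with $\incidencealgebra{\K}{L}$ tame concealed. If $\incidencealgebra{\K}{P}$ were representation-finite, then applying \Cref{lem:Lou75} once for each step of the reduction of $P$ to $L$ (each being a subtraction or a contraction) would force $\incidencealgebra{\K}{L}$ to be representation-finite, a contradiction; thus $\incidencealgebra{\K}{P}$ is representation-infinite.

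The remaining, and genuinely new, task is the frame-by-frame verification, which I would organise in two cases. For the five frames of types $\qAtilde_3$, $\qDtilde_4$, $\qEtilde_6$, $\qEtilde_7$ and $\qEtilde_8$, the underlying graph of the Hasse quiver is the corresponding Euclidean diagram, and it contains no pair of parallel paths: the $\qDtilde$ and $\qEtilde$ shapes are trees, while the $\qAtilde_3$ shape is a four-cycle with two sources and two sinks, so that every path has length at most one. Hence the commutativity relations defining $\incidencealgebra{\K}{L}$ vanish, so $\incidencealgebra{\K}{L}\cong\K\HasseQuiver{L}$ is tame hereditary, and a tame hereditary algebra is concealed of its own type via the identity tilting module, thus tame concealed. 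For the seven frames $\qR_1,\dots,\qR_7$, whose Hasse quivers contain genuine commutative squares and are therefore not hereditary, the plan is to match each of the resulting bound quiver algebras to an entry of the Happel--Vossieck list of tame concealed algebras \cite{HV83} (which is precisely the source of the terminology ``frames''); equivalently, one can exhibit for each an explicit postprojective tilting module over a Euclidean quiver realising it as a concealed algebra. Finally, since the opposite of a concealed algebra of type $Q$ is concealed of type $Q\op$ and tameness is preserved under $(-)\op$, the opposites of all twelve frames are tame concealed as well. The hard part will be this last case: identifying the $\qR_i$ within the Happel--Vossieck list (or producing the tilting modules) is the only step that requires an explicit computation rather than a formal deduction.
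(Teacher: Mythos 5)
Your proposal is correct and follows essentially the same route as the paper: reduce via Theorem \ref{thm:Lou75} to one of the twelve frames (or an opposite), identify these as tame concealed via the Happel--Vossieck list, and obtain the converse from the representation-infiniteness of tame concealed algebras together with Lemma \ref{lem:Lou75}. The only (harmless) difference is that you verify the five frames of types $\qAtilde_3$, $\qDtilde_4$, $\qEtilde_6$, $\qEtilde_7$, $\qEtilde_8$ directly as tame hereditary algebras rather than citing \cite{HV83} for all twelve, which is a slightly more self-contained treatment of those cases.
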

\begin{proof}
    Suppose that $P$ has a representation-infinite incidence $\K$-algebra. Then by \cref{thm:Lou75} $P$ can be reduced to one of the posets listed in \Cref{tab:Loupias frames}.
    It can be shown that the incidence $\K$-algebras of the listed posets are all tame concealed. Indeed, all of them (and their opposites) appear on Happel--Vossieck's list of tame concealed algebras \cite{HV83}, where we find that the incidence algebra of type $\qR_i$, where $1\leq i\leq 7$, is tame concealed of type $\qEtilde$. This proves necessity.
    
    Suppose that a finite number of subtractions and contractions of $P$ yields a poset $L$ such that $\incidencealgebra{\K}{L}$ is tame concealed. Since tame concealed algebras are representation-infinite, it follows from \Cref{lem:Lou75} that $P$ has a representation-infinite incidence $\K$-algebra. This proves the converse.
\end{proof}

\begin{example}\label{ex:repfiniteAnTensorAm}
    To familiarize the reader with the notions recalled above, we will now apply \Cref{thm:Lou75} to rediscover when the posets of the following form
    \begin{equation*}
        \prod\limits_{i=1}^\ell \totallyordered{n_i}
    \end{equation*}
    have representation-finite incidence $\K$-algebras, as originally shown by Leszczyński \cite[Proposition 2.1 and Theorem 2.4]{Les94}. We remind the reader that the poset $\totallyordered{n_i}$ is totally ordered and has $n_i$ elements.
    
    We consider posets of the form $\totallyordered{n}\times \totallyordered{m}$ first, where $n\leq m$. If $n\geq 3$, then $\totallyordered{n}\times \totallyordered{m}$ has a subposet of type $\qDtilde_4$, and consequently $\totallyordered{n}\times \totallyordered{m}$ is of infinite representation type. The poset $\totallyordered{2}\times \totallyordered{5}$; also has a representation-infinite incidence $\K$-algebra, since the subposet displayed by the solid nodes and arrows below
    \begin{equation*}\begin{tikzpicture}
        \node (P-1-2) at (0,0) []{$\bullet$};
        \node (P-2-2) at (1,0) [] {$\star$};
        \node (P-3-2) at (2,0) [] {$\star$};
        \node (P-4-2) at (3,0) [] {$\bullet$};
        \node (P-5-2) at (4,0) [opacity=.4] {$\circ$};

        \node (P-1-1) at (0,-1) [opacity=.4] {$\circ$}; 
        \node (P-2-1) at (1,-1) [] {$\bullet$};
        \node (P-3-1) at (2,-1) [] {$\star$};
        \node (P-4-1) at (3,-1) [] {$\star$};
        \node (P-5-1) at (4,-1) [] {$\bullet$};
        
        \foreach \x [evaluate=\x as \xx using int(\x+1)] in {1,2,3}{
            \draw[-latex] (P-\x-2)--(P-\xx-2);
            \draw[-latex] (P-\xx-1)--(P-\xx-2);
        }
        \foreach \x [evaluate=\x as \xx using int(\x+1)] in {2,3,4}{
            \draw[-latex] (P-\x-1)--(P-\xx-1);
        }
        \draw[-latex,opacity=.4,dashed] (P-1-1)--(P-2-1);
        \draw[-latex,opacity=.4,dashed] (P-1-1)--(P-1-2);
        \draw[-latex,opacity=.4,dashed] (P-5-1)--(P-5-2);
        \draw[-latex,opacity=.4,dashed] (P-4-2)--(P-5-2);
    \end{tikzpicture}
    \end{equation*}
    can be contracted onto the poset
    \begin{equation*}
        \begin{tikzpicture}
            \node (a) at (0,0) [] {$\star$};
            \node (b) at (-1,.5) [] {$\bullet$};
            \node (c) at (-1,-.5) [] {$\bullet$};
            \node (d) at (1,.5) [] {$\bullet$};
            \node (e) at (1,-.5) [] {$\bullet$};
            \draw[-latex] (b)--(a);
            \draw[-latex] (c)--(a);
            \draw[-latex] (a)--(d);
            \draw[-latex] (a)--(e);
        \end{tikzpicture}
    \end{equation*}
    which is of type $\qDtilde_4$. A poset of the form $\totallyordered{2}\times\totallyordered{m}$ can be reduced to $\totallyordered{2}\times\totallyordered{5}$ as long as $m\geq 5$, whence all of these posets have representation-infinite incidence $\K$-algebras. 
    One checks that $\totallyordered{2}\times \totallyordered{4}$ is iterated tilted of type $\qE_8$, so it has a representation-finite incidence $\K$-algebra. If $n=2$ and $m\leq 4$, one uses \Cref{thm:Lou75} to argue that $\totallyordered{n}\times \totallyordered{m}$ has a representation-finite incidence $\K$-algebra.
    The remaining cases are those where $n=1$, which yields the class of finite totally ordered posets, and all of these certainly have a representation-finite incidence $\K$-algebra. 
    %subposets of $\totallyordered{2}\times \totallyordered{4}$, whence \Cref{lem:Lou75}\eqref{lem:Lou75.1} shows that they are all representation-finite.

    The poset $\totallyordered{2}\times \totallyordered{2} \times \totallyordered{2}$ has a representation-infinite incidence $\K$-algebra; indeed, the subposet obtained by removing the maximal and minimal elements can be contracted onto a poset of type $\qAtilde_3$, as shown below. 
    \[
    \begin{tikzpicture}[scale=.9,every node/.style={scale=.9}]
        \node (A) at (0,0) [] {\begin{tikzpicture}
        \node (P-0-0-0) at (0,0) [opacity=.4] {$\circ$};
        \node (P-1-0-0) at (2,0) [] {$\bullet$};
        \node (P-0-1-0) at (1,.5) [] {$\bullet$};
        \node (P-0-0-1) at (0,2) [] {$\star$};
        
        \node (P-1-1-0) at (3,.5) [] {$\bullet$};
        \node (P-0-1-1) at (1,2.5) [] {$\star$};
        \node (P-1-0-1) at (2,2) [] {$\star$};
        \node (P-1-1-1) at (3,2.5) [opacity=.4]{$\circ$};

        \draw[-latex,opacity=.4,dashed] 
        {(P-0-0-0) edge (P-1-0-0)} 
        {(P-0-0-0) edge (P-0-1-0)} 
        {(P-0-0-0) edge (P-0-0-1)}
        ;
        \draw[-latex] 
        {(P-0-1-0) edge (P-1-1-0)}
        {(P-0-1-0) edge (P-0-1-1)}
        ;
        \draw[line width=3pt,white] 
        {(P-1-0-0) edge (P-1-1-0)}
        {(P-1-0-0) edge (P-1-0-1)}
        ;
        \draw[-latex] 
        {(P-1-0-0) edge (P-1-1-0)}
        {(P-1-0-0) edge (P-1-0-1)}
        ;
        \draw[line width=3pt,white] 
        {(P-0-0-1) edge (P-1-0-1)}
        {(P-0-0-1) edge (P-0-1-1)}
        ;
        \draw[-latex] 
        {(P-0-0-1) edge (P-1-0-1)}
        {(P-0-0-1) edge (P-0-1-1)}
        ;
        \draw[-latex,opacity=.4,dashed]
        {(P-1-1-0) edge (P-1-1-1)}
        {(P-1-0-1) edge (P-1-1-1)}
        {(P-0-1-1) edge (P-1-1-1)}
        ;
    \end{tikzpicture}};
    \node (B) at (6,0) [] {\begin{tikzpicture}
        \node (minimal1) at (1,1) [] {$\bullet$};
        \node (minimal2) at (1,-1) [] {$\bullet$};
        \node (maximal1) at (0,0) [] {$\star$};
        \node (maximal2) at (2,0) [] {$\bullet$};
        \draw[-latex]
        {(minimal1) edge (maximal1)}
        {(minimal1) edge (maximal2)}
        {(minimal2) edge (maximal1)}
        {(minimal2) edge (maximal2)}
        ;
    \end{tikzpicture}};
    \draw[decorate,decoration={zigzag,post=lineto,
    post length=2pt,pre length=2pt},->] (A)--(B);
    \end{tikzpicture}
    \]
    In fact, by \cite[Lemma 3.5]{CR19} (see also \cite[Example 18.6.2]{Len99}) we have that $\totallyordered{2}\times\totallyordered{2}\times\totallyordered{2}$ has a tame incidence $\K$-algebra.
    More generally, if we take any tensor product of three or more totally ordered sets, all with cardinality greater than or equal to 2, the result will always have a representation-infinite incidence $\K$-algebra. Indeed, they can all be reduced to $\totallyordered{2} \times \totallyordered{2}\times \totallyordered{2}$.
\end{example}

We now turn to the question of $\tau$-tilting finiteness of incidence algebras. 
\emph{A priori}, the $\tau$-tilting finiteness of an incidence algebra $\incidencealgebra{\K}{P}$ depends both on the field $\K$ and the poset $P$. However, when \Cref{thm:tautfiniteposets} has been proven, we will have established that it only depends on $P$; by \Cref{rem:repfin_allfields}, the representation-finiteness of incidence $\K$-algebras does not depend on the field.

The proof of \Cref{thm:tautfiniteposets} will rely on a variation of \Cref{lem:Lou75}. The statement in \Cref{lem:Pla19}\eqref{lem:Pla19.0} is slightly more general than the cited reference \cite[Corollary 2.2]{Pla19}, but the proof will remain identical, \emph{mutatis mutandis}. The key ingredient in the proof is the fact that a $\K$-algebra is $\tau$-tilting finite precisely when it has finitely many isomorphism classes of bricks \cite[Theorem~1.4]{DIJ19}.  Recall that an object $B$ in a $\K$-linear category $\mathcal{U}$ is a \textit{brick} if its endomorphism algebra is a division algebra over $\K$. 

\begin{lemma}\label{lem:Pla19}
    Let $\K$ be a field and let $\Lambda$ be a $\tau$-tilting finite $\K$-algebra. 
    \begin{enumerate}
    \setcounter{enumi}{-1}
        \item\label{lem:Pla19.0} Let $\cU\hookrightarrow \Modf(\Lambda)$ be a fully faithful functor of $\K$-linear categories. Then $\cU$ contains finitely many bricks up to isomorphism. In particular, if $\cU$ is equivalent to a module category $\Modf(\Lambda')$, then $\Lambda'$ is $\tau$-tilting finite.
    \end{enumerate}
    Let $P$ be a finite poset such that the incidence $\K$-algebra $\incidencealgebra{\K}{P}$ is $\tau$-tilting finite. 
    %\todo{Esr: Her bytter vi om på hvorvidt vi refererer til posetet eller insidensalgebraen som $\tau$-endelig. Hvilken ønsker vi?}
    \begin{enumerate}
        \item\label{lem:PlaLou75.1} Any subposet of $P$ has a $\tau$-tilting finite incidence $\K$-algebra.
        \item\label{lem:PlaLou75.2} Any contraction of $P$ has a $\tau$-tilting finite incidence $\K$-algebra.
    \end{enumerate}
    \begin{proof}
        As noted above, the assertion in \eqref{lem:Pla19.0} is a slight generalization of work of Plamondon \cite[Corollary~2.2]{Pla19}, and is  proved in an identical manner.
        The other assertions then follow from the existence of the fully faithful functors in \eqref{eq:fullyfaithful_subtr} and \eqref{eq:fullyfaithful_contr}.
    \end{proof}
\end{lemma}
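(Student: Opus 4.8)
The plan is to reduce everything to assertion \eqref{lem:Pla19.0} and to invoke the brick criterion for $\tau$-tilting finiteness \cite[Theorem~1.4]{DIJ19}, according to which a finite-dimensional $\K$-algebra is $\tau$-tilting finite if and only if it admits only finitely many isomorphism classes of bricks. Once \eqref{lem:Pla19.0} is in hand, the statements \eqref{lem:PlaLou75.1} and \eqref{lem:PlaLou75.2} will follow by feeding the fully faithful functors \eqref{eq:fullyfaithful_subtr} and \eqref{eq:fullyfaithful_contr} into it.

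First I would prove \eqref{lem:Pla19.0}. Write $F\colon\cU\hookrightarrow\Modf(\Lambda)$ for the given fully faithful $\K$-linear functor. The two properties I would extract are that $F$ sends bricks to bricks and that $F$ is injective on isomorphism classes of objects. For the first, if $B\in\cU$ is a brick, then fullness and faithfulness furnish a $\K$-algebra isomorphism $\End_{\cU}(B)\xrightarrow{\sim}\End_{\Lambda}(F(B))$, so the right-hand side is again a division algebra and $F(B)$ is a brick. For the second, I would use that a fully faithful functor reflects isomorphisms: if $F(B)\simeq F(B')$, then the isomorphism and its inverse lift uniquely along $F$, and faithfulness forces the lifts to be mutually inverse, so $B\simeq B'$. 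Combining the two, $F$ induces an injection from the isomorphism classes of bricks in $\cU$ into those in $\Modf(\Lambda)$; since $\Lambda$ is $\tau$-tilting finite, the target is finite by \cite[Theorem~1.4]{DIJ19}, and hence so is the source. If moreover $\cU\simeq\Modf(\Lambda')$, then $\Modf(\Lambda')$ has finitely many bricks, and the same theorem returns that $\Lambda'$ is $\tau$-tilting finite.

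With \eqref{lem:Pla19.0} established, the remaining assertions are immediate. For \eqref{lem:PlaLou75.1} I would take a subposet $L$ of $P$ and apply \eqref{lem:Pla19.0} to the fully faithful functor $\Modf(\incidencealgebra{\K}{L})\hookrightarrow\Modf(\incidencealgebra{\K}{P})$ of \eqref{eq:fullyfaithful_subtr}, with $\Lambda=\incidencealgebra{\K}{P}$; for \eqref{lem:PlaLou75.2} I would do the same with a contraction $R$ of $P$ and the functor \eqref{eq:fullyfaithful_contr}. The argument presents no genuine obstacle: it is entirely formal once the brick criterion is invoked, and the only point demanding a little care is the verification that fully faithful functors preserve bricks and reflect isomorphisms. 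This is precisely the mechanism through which Plamondon's argument \cite[Corollary~2.2]{Pla19} carries over \emph{mutatis mutandis} to the slightly more general setting of an arbitrary fully faithful $\K$-linear functor.
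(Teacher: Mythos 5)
Your proposal is correct and follows essentially the same route as the paper: the paper's proof simply cites Plamondon's argument as carrying over verbatim and then feeds the fully faithful functors \eqref{eq:fullyfaithful_subtr} and \eqref{eq:fullyfaithful_contr} into assertion \eqref{lem:Pla19.0}, exactly as you do. You have merely made explicit the two formal facts (fully faithful functors preserve bricks and reflect isomorphisms) that the paper leaves implicit in the phrase ``proved in an identical manner.''
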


\begin{theorem}\label{thm:tautfiniteposets}
    Let $P$ be a finite poset. Then $\incidencealgebra{\K}{P}$ is $\tau$-tilting finite if and only if it is representation-finite.
\end{theorem}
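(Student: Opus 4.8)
The plan is to prove the two implications separately, the forward one being essentially formal and the reverse one assembling the three ingredients advertised in the introduction.

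For the easy direction, suppose $\incidencealgebra{\K}{P}$ is representation-finite. Then it admits only finitely many isomorphism classes of indecomposable modules, so in particular only finitely many indecomposable $\tau$-rigid modules, whence $\taurigid{\incidencealgebra{\K}{P}}$ is finite. By \Cref{def:tautiltfinite} this is precisely the assertion that $\incidencealgebra{\K}{P}$ is $\tau$-tilting finite. This argument uses nothing special about incidence algebras and works for any finite-dimensional $\K$-algebra.

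For the converse I would argue by contraposition, showing that a representation-infinite incidence algebra is $\tau$-tilting infinite. Assume $\incidencealgebra{\K}{P}$ is representation-infinite. By \Cref{cor:Lou75concealed}, the poset $P$ can be reduced---by a finite sequence of subtractions and contractions---to a poset $L$ for which $\incidencealgebra{\K}{L}$ is tame concealed. By \Cref{lem:concealed_tau_inf} such an algebra is $\tau$-tilting infinite. It then remains to transport $\tau$-tilting infiniteness back up the chain of reductions, which is exactly the contrapositive of \Cref{lem:Pla19}: if a subposet or contraction of a poset has $\tau$-tilting infinite incidence algebra, then so does the original. Inducting on the number of reduction steps carrying $P$ to $L$, we conclude that $\incidencealgebra{\K}{P}$ is itself $\tau$-tilting infinite, as desired.

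The genuine mathematical content has already been isolated into the three results cited, so the remaining work is mostly bookkeeping. The one point requiring care is the logical direction of \Cref{lem:Pla19}, which is phrased as ``$\tau$-tilting finiteness descends to subtractions and contractions''; I must invoke it contrapositively, and should check that the reduction produced by \Cref{cor:Lou75concealed} is indeed built from exactly the subtraction and contraction steps to which \Cref{lem:Pla19} applies---it is, since a reduction is by definition a finite composite of such steps. It is also worth emphasising why tame concealed algebras furnish the needed obstruction: their postprojective (equivalently preinjective) component is infinite and acyclic, hence consists of infinitely many $\tau$-rigid indecomposables, which is precisely the input to \Cref{lem:concealed_tau_inf}. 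No honest difficulty remains beyond correctly chaining these facts.
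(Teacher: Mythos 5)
Your proposal is correct and follows essentially the same route as the paper: the forward direction is the standard observation that representation-finite algebras are $\tau$-tilting finite, and the converse is proved by contraposition via \Cref{cor:Lou75concealed}, \Cref{lem:concealed_tau_inf}, and the contrapositive of \Cref{lem:Pla19}. Your explicit induction on the reduction steps is just a slightly more spelled-out version of the paper's direct appeal to \Cref{lem:Pla19}\eqref{lem:Pla19.0}.
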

\begin{proof}
    All representation-finite algebras are $\tau$-tilting finite. We prove that the converse assertion holds for incidence $\K$-algebras of finite posets.

    Suppose that $P$ has a representation-infinite incidence $\K$-algebra. By \Cref{cor:Lou75concealed}, one can reduce $P$ to a poset $L$ such that the incidence $\K$-algebra $\incidencealgebra{\K}{L}$ is tame concealed. Since tame concealed algebras are $\tau$-tilting infinite, as we remarked in \Cref{lem:concealed_tau_inf}, it follows directly from \Cref{lem:Pla19}\eqref{lem:Pla19.0} that $P$ has a $\tau$-tilting infinite incidence $\K$-algebra.
    %By \Cref{lem:PlaLou75}, it suffices to show that the minimally representation-infinite incidence $\K$-algebras listed in \Cref{thm:Lou75} are $\tau$-tilting infinite.  Since the minimally representation-infinite incidence $\K$-algebras are tame concealed, , the proof is complete. 
    %(one can for example appeal to the fact that the preprojective component contains infinitely many bricks \cite[VIII.2.3 and VIII.2.7]{ASS06})
\end{proof}

There are other approaches to proving \Cref{thm:tautfiniteposets}. For instance, one can appeal to the fact that a simply connected $\K$-algebra is $\tau$-tilting finite if and only if it is representation-finite \cite[Theorem 1.1]{Wan22}. Since all minimally representation-infinite incidence $\K$-algebras, except $\qAtilde_3$, are simply connected, we immediately deduce the $\tau$-tilting infiniteness for eleven of the twelve cases listed in \Cref{tab:Loupias frames}. The exception, namely the poset $\qAtilde_3$, has a hereditary and representation-infinite incidence $\K$-algebra, and such $\K$-algebras are $\tau$-tilting infinite (cf. \cref{lem:concealed_tau_inf}), whereby we conclude the alternative proof.

The key insight leading to \Cref{thm:tautfiniteposets} was \Cref{cor:Lou75concealed}, where we showed that minimally representation-infinite incidence $\K$-algebras are concealed. Since any proper quotient of a tame concealed $\K$-algebra is representation-finite \cite[Proposition 6]{Bon17}, we deduce that these incidence $\K$-algebras are \emph{minimally $\tau$-tilting infinite $\K$-algebras}, which is to state that they are $\tau$-tilting infinite $\K$-algebras for which all proper quotients are $\tau$-tilting finite \cite{MP23}. 

\begin{example}\label{ex:tautfAnTensorAm}
Mitamoto and Wang have previously classified posets of the form $\totallyordered{n}\times \totallyordered{m}$ that have $\tau$-tilting finite incidence $\K$-algebras \cite{MW21}. This is achieved by appealing to the fact that path $\K$-algebras of the $\qA_n$-quivers are simply connected.
In \Cref{ex:repfiniteAnTensorAm} it was stated that $\totallyordered{n}\times \totallyordered{m}$ is representation-finite if and only if
\begin{equation}\label{eq:tautfAnTensorAm}
       1\in  \{n,m\} \quad \text{or} \quad \{n,m\} \in \big\{\{2,2\}, \{2,3\},\{2,4\} \big\}.
    \end{equation}
    By \Cref{thm:tautfiniteposets}, it follows that the condition stated in \eqref{eq:tautfAnTensorAm} is necessary and sufficient for $\totallyordered{n}\times \totallyordered{m}$ to have a $\tau$-tilting finite incidence $\K$-algebra.   
\end{example}

\begin{remark}
    A related concept to $\tau$-tilting finiteness is that of silting discreteness \cite[Definition~3.6]{Aihara13}. A silting discrete $\K$-algebra is $\tau$-tilting finite, but the converse is not true in general.
    There are incidence $\K$-algebras that are $\tau$-tilting finite, but not silting discrete. For example, consider the incidence $\K$-algebra of the poset with the Hasse quiver displayed below.
    \[
    \begin{tikzpicture}
        \node (A) at (2.5,0) [scale=.7] {$\bullet$};
        \node (B) at (1.5,.75) [scale=.7]{$\bullet$};
        \node (C) at (1.5,0) [scale=.7]{$\bullet$};
        \node (D) at (1.5,-.75) [scale=.7]{$\bullet$};
        \node (E) at (.5,0) [scale=.7]{$\bullet$};
        %\draw[-latex] (E)--(B);
        %\draw[-latex] (E)--(C);
        %\draw[-latex] (E)--(D);
        %\draw[-latex] (B)--(A);
        %\draw[-latex] (C)--(A);
        %\draw[-latex] (D)--(A);
        \draw[-latex] (D)--(E);
        \draw[-latex] (D)--(C);
        \draw[-latex] (D)--(A);
        \draw[-latex] (E)--(B);
        \draw[-latex] (C)--(B);
        \draw[-latex] (A)--(B);
    \end{tikzpicture}
    \]
    \revised{One observes that the incidence $\K$-algebra has a representation-finite incidence $\K$-algebra, which is hence also $\tau$-tilting finite. It is, however, tilted of type $\qDtilde_4$. Since silting discreteness only depends on the derived category, and a path $\K$-algebra of a quiver of type $\qDtilde_4$ has a $\tau$-tilting infinite incidence $\K$-algebra, we may conclude that the incidence $\K$-algebra is not silting discrete.}
\end{remark}

\section{\texorpdfstring{$\mathbf{g}$}{g}-tameness of incidence algebras and concealed algebras}\label{sec:g}
For the remainder of this text, we assume the field $\K$ to be algebraically closed. The main result in this section will be the following. 
\begin{reptheorem}{thm:notgtame}
    Let $B$ be a wild concealed $\K$-algebra. Then $B$ is not \g-tame.
\end{reptheorem}
We then deduce \Cref{cor:concealed_gvecfan}, asserting that the incidence $\K$-algebra of a finite simply connected poset is $\mathbf{g}$-tame if and only if it is tame.
We first recall the notion of $\mathbf{g}$-tameness, so that we may develop the proof of \Cref{thm:notgtame} in Sections \ref{sec:3.1-concealed_typeA} and
\ref{sec:3.2-wild_not_gtame} below. 

\subsection{Preliminaries on $\g$-vector fans}

Before recalling the definition of the $\mathbf{g}$-vector fan of a finite-dimensional $\K$-algebra, we give a brief exposition on Grothendieck groups, as well as the combinatorial notion of polyhedral fans \cite{Zie95}.

\introheaderTwo{Grothendieck groups.} Let $(\mathcal{C},\mathcal{E})$ denote an essentially small Quillen exact category, let $\mathrm{iso}(\mathcal{C})$ denote the set of isomorphism classes of objects in $\mathcal{C}$, \revised{and let $[A]$ denote the isomorphism class of an object $A\in \mathcal C$.} The \textit{Grothendieck group} of $(\mathcal{C},\mathcal{E})$ is defined as follows:
\begin{equation*}
\mathrm{K}_0(\mathcal{C},\mathcal{E}) \defeq \left. \Z^{\oplus \mathrm{iso}(\mathcal{C})} \middle/ \big\langle[B] - [A] - [C] \sth
	 \begin{tikzcd}
			A \arrow[r,tail] & B \arrow[r,two heads] & C
	\end{tikzcd}
	\textit{ is an $\mathcal{E}$-exact sequence} \big\rangle.\right.
\end{equation*}
%Let $\proj(\Lambda)$ denote the full subcategory of $\Modf(\Lambda)$ consisting of finite-dimensional projective $\Lambda$-modules. Define the \emph{Grothendieck group} of $\proj(\Lambda)$ as the free abelian group spanned by the isomorphism classes of objects in $\proj(\Lambda)$. It will be denoted by $\mathrm{K}_0(\proj\Lambda)$. 
\revised{By abuse of notation we will write $[A]$ to refer to the image of $[A]$ in $K_0(\mathcal C, \mathcal E)$.}

For a finite-dimensional $\K$-algebra $\Lambda$, we consider two examples.  The first is $\Modf(\Lambda)$, the abelian category of finite-dimensional $\Lambda$-modules. The second is $\proj(\Lambda)$, the full subcategory of $\Modf(\Lambda)$ consisting of finite-dimensional projective $\Lambda$-modules, which is equipped with the split-exact structure. One sees that $\mathrm{K}_0(\proj\Lambda)$ is a free abelian group of rank $n$, where $n$ is the number of isomorphism classes of indecomposable projective $\Lambda$-modules. 
Using the indecomposable finite-dimensional projective $\Lambda$-modules as a basis for $\mathrm{K}_0(\proj\Lambda)$, we can identify the group $\mathrm{K}_0(\proj\Lambda)$ with $\mathbb{Z}^n$.

There is an injection
\begin{equation}\label{eq:gvectormasp}
    \begin{tikzcd}
        \g^{(-)}\colon \taurigidpair{\Lambda}\arrow[r,hook] & \mathrm{K}_0(\proj\Lambda)
    \end{tikzcd}
\end{equation}
from the set of $\tau$-rigid pairs in $\Modf(\Lambda)$ into the Grothendieck group of $\proj(\Lambda)$ \cite[Theorem 5.5]{AIR14}.
This injection can be explicitly defined as follows: let $(M,Q)$ be a $\tau$-rigid pair and let
\(
P_1 \longrightarrow P_0
\)
be a minimal projective presentation of $M$. Set $\g^{(M,Q)}\defeq [P_0]-[P_{1}]-[Q]\in \mathrm{K}_0(\proj\Lambda)$. 
We define the \textit{real Grothendieck group} of $\Lambda$ by 
$$
    \mathrm{K}_0(\proj\Lambda)_{\mathbb{R}} \defeq \mathrm{K}_0(\proj\Lambda)\otimes_{\mathbb{Z}}\mathbb{R}.
$$ 
As an $\R$-vector space, it is isomorphic to $\R^n$, where $n$ is the rank of the algebra.
For a $\tau$-rigid pair $(M,Q)$, we call the element $\g^{(M,Q)}\otimes 1\in \mathrm{K}_0(\proj\Lambda)_{\R}$ its \textit{$\g$-vector}. Since the map in \eqref{eq:gvectormasp} injects, we have that $\g$-vectors determine $\tau$-rigid pairs uniquely.
\revised{An indecomposable} $\tau$-rigid $\Lambda$-module can thus be identified with the ray of \revised{its \g-vector} in $\mathrm{K}_0(\proj\Lambda)_{\mathbb{R}}$. Equivalently, we can regard it as a point on the unit $(n-1)$-sphere $\S^{n-1}$. 

\introheaderTwo{Polyhedral fans.} Fixing a positive integer $n$, consider an $n$-dimensional vector space $V$.
Given a set of vectors $S=\{v_1 , \dots , v_{\ell}\}\subseteq V$, its positive span $C_S\subseteq \R^n$ is called a \textit{polyhedral cone} in $V$, or simply a \textit{cone}. Such a cone is said to be \textit{simplicial} if it is of the form $C_S$, where $S$ is a set of linearly independent vectors in $V$. The \textit{dimension} of a simplicial cone is the dimension of the subspace spanned by it. 
The faces of a cone can be defined through intersecting the cone with certain half-spaces \cite[Defintion~2.1]{Zie95}. When $C_S$ is a simplicial cone, the faces are given by $C_T$ for subsets $T \subseteq S$.
A \textit{polyhedral fan} in $V$ is given by a family of polyhedral cones in $V$ denoted $\mathcal{S} = \{{C}_X\}_X$, enjoying the following two properties:
\begin{enumerate}
    \item The family $\mathcal{S}$ is closed under the formation of faces.
    \item The intersection of two elements $C_1$ and $C_2$ in $\mathcal{S}$ is in $\mathcal{S}$, and it is a face to both $C_1$ and $C_2$.
\end{enumerate} 
A polyhedral fan $\mathcal{S}$ is \textit{simplicial} if all of its cones are simplicial. 
We say that $\mathcal{S}$ is \textit{finite} if the number of elements in $\mathcal{S}$ is finite, and \textit{complete} if every vector in $V$ lies inside some cone $C\in \mathcal{S}$.

\introheaderTwo{$\g$-vector fans and $\g$-tameness.} For a (possibly decomposable) $\tau$-rigid pair $(M,Q)$ in $\Modf(\Lambda)$, let $C_{(M,Q)}$ denote the cone in $\mathrm{K}_0(\proj\Lambda)_{\mathbb{R}}$ positively spanned by the \g-vectors of the indecomposable direct summands of $(M,Q)$. 
The \textit{$\g$-vector fan} of $\Lambda$ is defined as a the polyhedral fan consisting of the cones $C_{(M,Q)}$, where $(M,Q)$ is a $\tau$-rigid pair $(M,Q)$ in $\Modf(\Lambda)$. We denote the $\g$-vector fan of $\Lambda$ by $\gvectorfan{\Lambda}$. It is a simplicial and essential fan. 

\begin{theorem}[{\cite[Theorem~4.7]{Asa21}}]\label{thm:CompleteGfan}
Let $\Lambda$ be a finite-dimensional $\K$-algebra. 
The $\g$-vector fan $\gvectorfan{\Lambda}$ is finite precisely when it is complete. Equivalently, the $\K$-algebra $\Lambda$ is $\tau$-tilting finite if and only if the \g-vector fan $\gvectorfan{\Lambda}$ is complete.
\end{theorem}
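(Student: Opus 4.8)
The plan is to route all the asserted equivalences through $\tau$-tilting finiteness, using as the one external input the criterion, already recalled above, that $\Lambda$ is $\tau$-tilting finite if and only if it has finitely many bricks \cite[Theorem~1.4]{DIJ19}. I would organize the argument into three implications: (1) $\gvectorfan{\Lambda}$ is finite if and only if $\Lambda$ is $\tau$-tilting finite; (2) $\tau$-tilting finite implies complete; (3) complete implies $\tau$-tilting finite. Together with (1) these give both the ``finite iff complete'' statement and the ``complete iff $\tau$-tilting finite'' reformulation.

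For step (1), which is elementary, I would use the injection $\g^{(-)}$ of \eqref{eq:gvectormasp}. Each cone $C_{(M,Q)}$ is the positive span of the $\g$-vectors of the indecomposable summands of $(M,Q)$, so it is determined by a finite subset of $\mathrm{ind}(\taurigidpair{\Lambda})$. If $\Lambda$ is $\tau$-tilting finite then $\mathrm{ind}(\taurigidpair{\Lambda})$ is finite, hence there are only finitely many such subsets and the fan is finite. Conversely, a finite fan has finitely many rays; since these rays are precisely the images under the injection $\g^{(-)}$ of the indecomposable $\tau$-rigid pairs, there are finitely many of the latter, i.e.\ $\Lambda$ is $\tau$-tilting finite. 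Note that the $n$-dimensional (maximal) cones correspond bijectively to the basic support $\tau$-tilting pairs, since a simplicial cone recovers its extreme rays and the injectivity of $\g^{(-)}$ recovers the summands.

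For step (2), assume $\Lambda$ is $\tau$-tilting finite, so by step (1) the fan is finite and, by \cite[Theorem~1.4]{DIJ19}, there are only finitely many bricks $S_1,\dots,S_r$. Invoking the wall-and-chamber description of $\mathrm{K}_0(\proj\Lambda)_{\mathbb{R}}$ \cite{BT19,Asa21}, each brick $S_i$ cuts out a wall contained in the hyperplane $\{\theta : \langle \theta, [S_i]\rangle = 0\}$, and the interiors of the maximal cones of $\gvectorfan{\Lambda}$ are exactly the chambers, i.e.\ the connected components of the complement of the union of these walls. That union lies inside a finite union of hyperplanes, so it is closed and nowhere dense; hence the union of the open maximal cones is dense. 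As there are finitely many maximal cones, the union of the corresponding \emph{closed} cones is a finite union of closed sets containing a dense subset, so it equals all of $\mathrm{K}_0(\proj\Lambda)_{\mathbb{R}}$. Thus the fan is complete.

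Step (3) is where the real work lies, and I expect it to be the main obstacle. I would prove the contrapositive: if $\Lambda$ is $\tau$-tilting infinite, then $\gvectorfan{\Lambda}$ is not complete. By \cite[Theorem~1.4]{DIJ19} there are now infinitely many bricks and hence infinitely many walls, but this alone does not force incompleteness, since an infinite simplicial fan can still cover the whole space on purely topological grounds. The crux is to produce an \emph{explicit} uncovered direction $\theta_0$: one must show that the accumulation of the walls yields a functional $\theta_0$ lying in the interior of no maximal cone, the prototypical example being the ``central'' direction of the Kronecker algebra, where the $\g$-vectors of the postprojective and preinjective modules accumulate to a limiting ray that is never itself a $\g$-vector and is left uncovered. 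The general mechanism is the characterization, via King stability and the reduction to $\theta_0$-semistable wide subcategories \cite{Asa21}, that a direction fails to be covered exactly when its semistable subcategory is brick-infinite; the obstacle is to locate such a $\theta_0$ from the global brick-infiniteness of $\Lambda$, and this genuinely requires the wall-and-chamber machinery rather than the bare $\g$-vector injection. Once $\theta_0$ is shown to lie outside every cone, completeness fails, and combining (1)–(3) establishes all the stated equivalences.
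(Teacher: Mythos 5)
The paper itself does not prove this statement: \Cref{thm:CompleteGfan} is imported verbatim from \cite[Theorem~4.7]{Asa21}, so there is no internal argument to compare yours against. Judged on its own terms, your step (1) is correct and elementary, and step (2) is a legitimate route to ``$\tau$-tilting finite implies complete'' (though note that to conclude $\overline{\mathrm{Walls}}$ lies in finitely many hyperplanes you must first reduce \emph{all} walls $\Theta_M$ to walls of bricks, via King's observation that a $\theta$-semistable module has a $\theta$-stable, hence brick, subfactor $S$ with $\theta\in\Theta_S$; you gesture at this but it should be said).

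The genuine gap is step (3). You correctly identify ``complete implies $\tau$-tilting finite'' as the hard direction, but what you write there is a description of what would need to be proved, not a proof: you say one ``must show'' that brick-infiniteness produces an uncovered direction $\theta_0$, call this ``the main obstacle,'' and stop. The gap is real and not merely cosmetic, because completeness of an infinite simplicial fan is not a formal contradiction: in $\R^2$ the fan whose rays sit at angles $0$ and $1/n$ for $n\geq 1$ (together with cones filling the rest of the circle) is infinite, simplicial, and complete in the sense of the definition used here, since the ray at angle $0$ is itself a cone of the fan. So one cannot argue purely combinatorially that ``infinitely many cones forces an uncovered point''; the argument must use algebra-specific input. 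Asai's actual proof of this direction runs through his theory of TF-equivalence and the reduction of $\mathrm{K}_0(\proj\Lambda)_{\R}$ at a rational $\theta$ to the wide subcategory of $\theta$-semistable modules, showing that $\tau$-tilting infiniteness forces the existence of a rational $\theta_0$ whose TF-equivalence class is not contained in any cone $C_{(M,Q)}$. Until you supply that construction (or an equivalent one), your proposal establishes only one of the two implications in the stated equivalence.
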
 

Having classified all $\tau$-tilting finite incidence $\K$-algebras of finite posets in \Cref{thm:tautfiniteposets}, we deduce from \Cref{thm:CompleteGfan} that we can characterize the incidence $\K$-algebras of finite posets admitting a complete \g-vector fan. We now move on to the case where the \g-vector fan may be incomplete. 

\begin{definition}\cite{AY23}
    We say that a finite-dimensional $\K$-algebra $\Lambda$ is \textit{$\g$-tame} if its $\g$-vector fan is dense in $\R^{n}$, i.e. for any $x\in \R^n$ and for all $\varepsilon>0$, the $\varepsilon$-ball centered at $x$ intersects a cone in $\gvectorfan{\Lambda}$.
\end{definition}

By \Cref{thm:CompleteGfan}, all $\tau$-tilting finite $\K$-algebras are \g-tame. Furthermore, any tame $\K$-algebra is $\mathbf{g}$-tame \cite{PY23}. However, the converses of these two assertions are false; there are examples of both tame $\tau$-tilting finite $\K$-algebras (for example a tame local $\K$-algebra) and wild $\K$-algebras that are \g-tame (for example a wild local $\K$-algebra). For some nice classes of $\K$-algebras, the notions of tameness and \g-tameness coincide.

\begin{proposition}[{\cite[Theorem 5.1(4)]{Hil06}, \cite[Theorem 1.2]{Yur23}}]\label{prop:g_tam}
    Let $Q$ be a finite quiver. Then the path algebra $\K Q$ is $\g$-tame if and only if it is tame.
\end{proposition}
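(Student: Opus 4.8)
The plan is to split the biconditional and reduce the substantive half to an analysis of the fan $\gvectorfan{H}$.

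\emph{Reductions and the easy implication.} First I would reduce to $H$ connected: for a product $H\cong H_1\times\cdots\times H_r$ the fan $\gvectorfan{H}$ is the product of the $\gvectorfan{H_i}$, so $H$ is $\g$-tame if and only if each $H_i$ is, and the same holds for tameness. As $\K$ is algebraically closed, a connected basic hereditary algebra is $\K Q$ for a connected acyclic quiver $Q$, and the trichotomy for hereditary algebras reads: $\K Q$ is representation-finite exactly when $Q$ is Dynkin, tame-infinite exactly when $Q$ is Euclidean ($\qAtilde_n,\qDtilde_n,\qEtilde_6,\qEtilde_7,\qEtilde_8$), and wild otherwise; here ``tame'' in the sense of the statement means ``not wild'', and so includes the Dynkin case. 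The implication ``tame $\Rightarrow$ $\g$-tame'' is then immediate: in the Dynkin case $H$ is $\tau$-tilting finite, so $\gvectorfan{H}$ is complete by \Cref{thm:CompleteGfan} and a fortiori dense, while the tame-infinite case is \cite{PY23}. It remains to prove that a connected \emph{wild} hereditary algebra is not $\g$-tame, that is, that $\gvectorfan{H}$ omits a nonempty open subset of $\mathrm{K}_0(\proj H)_{\R}\cong\R^n$.

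\emph{Setting up the fan.} For hereditary $H$ the Auslander--Reiten formula $\Hom_H(M,\tau M)\cong \duality\Ext^1_H(M,M)$ shows that the $\tau$-rigid modules are exactly the rigid (exceptional) ones, and that every indecomposable has projective dimension at most one; hence the $\g$-vector of an indecomposable $M$ with minimal projective presentation $0\to P_1\to P_0\to M\to 0$ is simply $[P_0]-[P_1]$. I would organise the indecomposable $\tau$-rigid pairs into the postprojective, preinjective, and regular rigid families, and track the rays they span under the Coxeter transformation $\Phi$, which realises the action of $\tau$ on $\mathrm{K}_0$. The controlling invariant is the spectral radius $\rho$ of $\Phi$: it equals $1$ in Dynkin and Euclidean type, whereas $\rho>1$ precisely in the wild case, where $\Phi$ has a real Perron eigenvalue $\rho$ with positive eigenvector $y^{+}$ and a reciprocal eigenvalue $\rho^{-1}$ with eigenvector $y^{-}$.

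\emph{The gap.} The crux is that, for $\rho>1$, the postprojective $\g$-vectors (attached to $\tau^{-i}P_j$) accumulate projectively onto the single ray $\R_{\geq0}\,y^{+}$ while the preinjective $\g$-vectors accumulate onto a \emph{distinct} ray $\R_{\geq0}\,y^{-}$; since $\rho\neq\rho^{-1}$ these limits genuinely differ. I would first check this in the rank-two model $H=\K K_m$ with $m\geq3$ arrows, where one sees outright an open angular sector of $\R^2$ meeting no $\g$-vector, in contrast with the tame Kronecker $m=2$, where $y^{+}=y^{-}$ and the sector collapses to a single ray. For general wild $Q$ the task is to upgrade this: one must show that the cones of the postprojective and preinjective families, together with the cones of the \emph{regular} rigid modules, all stay within a closed region bounded away from some open cone $U$ lying ``between'' $y^{+}$ and $y^{-}$, so that $U\cap\gvectorfan{H}=\varnothing$.

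\emph{Main obstacle and alternative route.} The principal difficulty is the regular part: unlike tame type, where the regular rigid modules occupy finitely many non-homogeneous tubes and are finite in number, a wild hereditary algebra carries infinitely many regular exceptional modules, and one must rule out that their $\g$-vectors, or the higher cones they span with postprojectives and preinjectives, invade $U$. I expect the most transparent route is the dual, wall-and-chamber description used later for \Cref{thm:notgtame}: a direction lies outside the closure of $\gvectorfan{H}$ precisely when it has a neighbourhood every point of which supports a nonzero semistable module, so that non-$\g$-tameness amounts to producing an open set of stability functions all of which make some (regular) module semistable. For wild $H$ the indefiniteness of the Euler form supplies such an open cone, with the walls of the regular bricks accumulating to fill it; this is exactly what makes $\rho>1$ decisive. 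The same phenomenon is quantified in \cite[Theorem~5.1(4)]{Hil06} through the spherical volumes of the maximal tilting cones and reproved in \cite[Theorem~1.2]{Yur23}; alternatively I would follow Hille's estimate, showing that the total volume swept by the tilting cones on $\S^{n-1}$ is strictly below that of the whole sphere exactly when $Q$ is wild.
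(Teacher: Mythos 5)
The paper does not actually prove this proposition: it is imported from Hille and Yurikusa, and the only related argument in the text is the proof of \Cref{thm:notgtame}, which establishes the hard direction in the greater generality of wild concealed algebras. Measured against that, your reductions and your easy direction (Dynkin via completeness of the fan from \Cref{thm:CompleteGfan}, Euclidean via Plamondon--Yurikusa) are fine, and of your two routes for the hard direction only the second one lines up with an argument that actually closes. The Coxeter--Perron analysis you open with is essentially Hille's original approach; as you yourself concede, it stalls on the regular exceptional modules, and you supply no estimate keeping their $\g$-vectors, or the higher-dimensional cones they span with postprojectives and preinjectives, out of the putative gap $U$. That is not a small omission: it is precisely the content of Hille's volume computation, so as written this route is a restatement of the difficulty rather than a proof.

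Your fallback route --- exhibit a nonempty open cone contained in $\overline{\mathrm{Walls}}$ and invoke Asai's identification of the complement of $\overline{\mathrm{Walls}}$ with the union of the interiors of the maximal cones --- is exactly the mechanism of the paper's proof of \Cref{thm:notgtame}, specialised to $B=H$: the open cone is the negative cone $C^{<0}_H$ of the Tits form, which is nonempty precisely when $Q$ is wild, and one checks that every rational point of it lies on some wall $\Theta_M$ before passing to closures. The step you leave implicit, and which should be made explicit, is why each such rational direction supports a nonzero semistable module; note that $\theta$ proportional to $\g^N$ for a regular indecomposable $N$ with $q_H([N])<0$ does \emph{not} satisfy $\theta(N)=0$, so $N$ itself is not the witness. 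The paper sidesteps this by first reducing to hyperbolic type via \Cref{thm:red_to_hyperbolic}, where Kac's description of imaginary roots makes the claim $C^{<0}_H\cap \mathrm{K}_0(\proj H)_{\Q}\subseteq \mathrm{Walls}$ transparent; for your statement you would either perform the analogous vertex-deletion reduction or appeal directly to King's criterion for the existence of $\theta$-semistable modules. With that step filled in, your second route is a correct proof and coincides with the paper's machinery; without it, the argument has the same gap in both of your routes, merely relocated.
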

Our contribution to the theory of $\g$-tameness in \Cref{thm:notgtame} will build on known results about hereditary $\K$-algebras. Indeed, we will generalize \Cref{prop:g_tam} to the context of concealed $\K$-algebras.

\subsection{Reduction from wild to hyperbolic type}
In order to prove \Cref{thm:notgtame}, we devise a reduction technique from concealed $\K$-algebras to smaller concealed $\K$-algebras. Most of the current section is due to Happel--Unger \cite{HU89}, but there are some edge cases that we need to handle more carefully. Specifically, concealed $\K$-algebras of type $\qAtildetilde$ will need special attention.

The following lemma will be paramount. Recall that a subposet $P'$ of a poset $P$ is \textit{convex in $P$} if the following condition holds: if $x,y\in P'$ then $[x,y]\subseteq P'$, i.e. $z\in P'$ provided that $x\leq z\leq y$. Note that the assertion in \eqref{lem.PY23.3.11} below can be strengthened to include general ideal quotients \cite[Corollary 7.8]{AY23}.

\begin{lemma}\label{lem:convidem}
    \begin{enumerate}
        \item[] 
        \item\label{lem.PY23.3.11} Let $\Lambda$ be a $\g$-tame $\K$-algebra and let $e\in\Lambda$ be an idempotent. Then the factor algebra $\Lambda/(e)$ is $\g$-tame \cite[Proposition 3.11]{PY23}.
        \item\label{lem:convidem2} Let $P$ be a finite poset, and let $L$ be a convex subposet of $P$. 
    If $P$ has a $\g$-tame incidence $\K$-algebra, so does $L$. 
    \end{enumerate}
\end{lemma}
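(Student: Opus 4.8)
The plan is to exhibit $\incidencealgebra{\K}{L}$ as a \emph{factor algebra} of $\incidencealgebra{\K}{P}$ and then invoke part \eqref{lem.PY23.3.11}, which guarantees that factor algebras inherit $\g$-tameness. Whereas a general subposet is captured by an idempotent subalgebra $e\Lambda e$ as in \eqref{eq:fullyfaithful_subtr}, a \emph{convex} subposet should instead correspond to a quotient, and it is precisely this quotient description that makes part \eqref{lem.PY23.3.11} applicable.

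Writing $\Lambda = \incidencealgebra{\K}{P}$, I would set $e \defeq \sum_{i \in P \setminus L} e_i$, the idempotent supported on the vertices outside $L$, and first identify the two-sided ideal $(e) = \Lambda e \Lambda$. Using the interval basis $\{[a,b]\}_{[a,b]\in\intervals{P}}$ together with the multiplication rule $[c,d][a,b] = \delta_{b,c}[a,d]$, one checks that $\Lambda e_i \Lambda$ is spanned by those intervals $[a,b]$ with $a \le i \le b$, since $[i,b]\,e_i\,[a,i] = [a,b]$ for any such $a,b$. Hence $(e)$ is spanned by the intervals $[a,b]$ that meet $P \setminus L$, i.e. those for which there is some $i \notin L$ with $a \le i \le b$. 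Consequently the quotient $\Lambda/(e)$ has as a basis the images of the intervals $[a,b]$ that are \emph{entirely contained} in $L$.

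This is where convexity enters. For $a,b \in L$ with $a \le b$, convexity of $L$ forces the interval $[a,b]$ formed in $P$ to lie inside $L$, so that it coincides with the interval $[a,b]$ formed in $L$; conversely, any interval of $P$ contained in $L$ arises in this way. This gives a bijection between the surviving basis of $\Lambda/(e)$ and $\intervals{L}$, and it is compatible with the multiplication, again because convexity ensures that whenever $[a,b]$ and $[b,c]$ survive, so does $[a,c]$. Therefore $\Lambda/(e) \cong \incidencealgebra{\K}{L}$ as $\K$-algebras, and applying part \eqref{lem.PY23.3.11} to the $\g$-tame algebra $\Lambda$ shows that $\incidencealgebra{\K}{L}$ is $\g$-tame, as desired.

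The main point demanding care is the bookkeeping of which intervals are killed in $\Lambda/(e)$, and the verification that convexity is exactly the condition ensuring the quotient is again an incidence algebra rather than something strictly larger. Indeed, were $L$ not convex there would exist $a,b \in L$ and $z \notin L$ with $a \le z \le b$, so the interval $[a,b]$ would be annihilated in $\Lambda/(e)$ while remaining a nonzero basis element of $\incidencealgebra{\K}{L}$; the isomorphism would then fail. This confirms that the convexity hypothesis is genuinely needed for this argument, and distinguishes the present situation from the idempotent-subalgebra description used for arbitrary subposets.
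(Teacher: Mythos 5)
Your proposal is correct and follows essentially the same route as the paper: both realise $\incidencealgebra{\K}{L}$ as the quotient of $\incidencealgebra{\K}{P}$ by the ideal generated by the idempotent supported on $P\setminus L$ (the paper's $1-e$ is your $e$) and then invoke part~\eqref{lem.PY23.3.11}. The only difference is that the paper asserts the isomorphism $\incidencealgebra{\K}{L}\simeq \incidencealgebra{\K}{P}/(1-e)$ as an immediate consequence of convexity, whereas you verify it explicitly on the interval basis; that verification is accurate.
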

\begin{proof}
    We deduce assertion \eqref{lem:convidem2} from assertion \eqref{lem.PY23.3.11}.
    Let $e = \sum_{x\in L} e_x$ be the sum of the primitive idempotents corresponding to the elements in $L$. Since $L$ is convex, it follows that $\incidencealgebra{\K}{L}$ is isomorphic to $\incidencealgebra{\K}{P}/(1-e)$. Since $1-e$ is an idempotent in the $\g$-tame algebra $\incidencealgebra{\K}{P}$, we have by \eqref{lem.PY23.3.11} that the algebra $\incidencealgebra{\K}{P}/(1-e)$ is $\g$-tame. We conclude that $\incidencealgebra{\K}{L}$ is $\g$-tame.
\end{proof}

\begin{definition}\label{definition:hyperbolic}
    A finite-dimensional $\K$-algebra $\Lambda$ is called \textit{hyperbolic} if the following conditions hold.
    \begin{enumerate}
        \item The $\K$-algebra $\Lambda$ is wild.
        \item For all primitive idempotents $e\in\Lambda$, the factor algebra $\Lambda/(e)$ is tame.
    \end{enumerate}
    An acyclic quiver $Q$ is \textit{hyperbolic} if the path algebra $\K Q$ is hyperbolic, or equivalently if $Q$ is wild and any proper full subquiver of $Q$ is tame. 
    If $Q$ is a hyperbolic quiver, a concealed $\K$-algebra of type $Q$ is called \emph{hyperbolically concealed of type $Q$}.\footnote{Our terminology is that of Kac \cite{Kac83} rather than that of Happel--Unger \cite{HU89} and Kerner \cite{Ker88}, who refer to hyperbolic quivers as \textit{minimally wild}.}
\end{definition}

\begin{remark}\label{rem:Kerner}
     It does not follow directly from the definition that hyperbolically concealed $\K$-algebras are hyperbolic, but this is the case \cite[Proposition 4.1]{Ker88}. Kerner provides an exhaustive list of hyperbolic quivers \cite[Remark following Theorem 4.1]{Ker88}, which enables a classification of hyperbolically concealed $\K$-algebras by listing all concealed $\K$-algebras that arise from these quivers. Such a classification is given by Unger \cite{unger_concealed_1990}.
\end{remark}

In this section we prove the following result.
\begin{theorem}\label{thm:red_to_hyperbolic}
    Let $Q$ be a wild quiver, and let $B$ be a concealed $\K$-algebra of type $Q$. Then there exists an idempotent $e\in B$ such that $B/(e)$ is concealed of hyperbolic type.
    \end{theorem}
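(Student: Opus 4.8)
The plan is to argue by induction on the number of vertices of $Q$, reducing the bulk to the work of Happel--Unger \cite{HU89} while treating the cyclic types by hand. If $Q$ is already hyperbolic in the sense of \Cref{definition:hyperbolic}, there is nothing to prove, so I may assume that $Q$ is wild but not hyperbolic. The argument naturally splits into two parts: a purely combinatorial step, which locates a full subquiver one vertex smaller on which to induct, and an algebraic step, which realises the passage to that subquiver as a quotient of the concealed algebra $B$ by an idempotent. Composing the idempotents produced along the way will give the single idempotent $e$ in the statement.

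The combinatorial core I would establish first: a connected wild acyclic quiver $Q$ that is not hyperbolic admits a vertex $x$ for which the full subquiver $Q' = Q\setminus\{x\}$ is again connected and wild. To see this, fix a minimal wild (hence hyperbolic) full subquiver $Q_0\subseteq Q$; since $Q$ is connected, the vertices of $Q\setminus Q_0$ can be enumerated so that each is adjacent to the union of $Q_0$ with the vertices chosen before it. Every quiver occurring in this enumeration is then connected, acyclic, and contains the wild full subquiver $Q_0$, so it is wild as well, because for a full subquiver one has $\K Q_0\cong f(\K Q)f$ for the corresponding idempotent $f$, and an idempotent subalgebra of a tame algebra is tame. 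Deleting the last vertex added furnishes the desired $x$, and iterating connects $Q$ to a hyperbolic quiver through a chain of single-vertex deletions, each of whose members is connected and wild.

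The algebraic step lifts a single such deletion to the concealed level: given $B=\End_{\K Q}(T)$ with $T$ a postprojective tilting $\K Q$-module and $x$ as above, I would produce a primitive idempotent $e\in B$ with $B/(e)$ concealed of type $Q'$. The mechanism, following Happel--Unger, is to remove from $T$ the indecomposable summand $T_x$ dictating the deletion and to pass to the perpendicular category of the exceptional module $T_x$ in $\Modf(\K Q)$; for a suitable choice of $T_x$ this category is equivalent to $\Modf(\K Q')$ for the hereditary algebra on the $(n-1)$-vertex quiver $Q'$, and the remaining summands of $T$ assemble into a postprojective tilting object there whose endomorphism algebra is exactly the factor $B/(e)$, with $e$ the idempotent at $x$. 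The points demanding care are that postprojectivity is preserved under the perpendicular equivalence and that the resulting $(n-1)$-vertex quiver really is the intended connected subquiver $Q'$; both are arranged by choosing $T_x$ appropriately among the summands of $T$.

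The main obstacle, and the place where the argument genuinely extends \cite{HU89}, is the behaviour of this reduction for the types $\qAtildetilde$, whose underlying graphs contain a cycle. For these the reduction designed for tree-shaped quivers can fail: removing a summand sitting over a cyclic vertex may disconnect the perpendicular category or drop its representation type to tame, so the clean one-vertex reduction above need not keep us simultaneously connected, wild, and concealed. I would dispatch these cases directly, using Kerner's list of hyperbolic quivers \cite{Ker88} together with Unger's classification of the associated concealed algebras \cite{unger_concealed_1990}: one deletes a vertex lying off the cycle (a source or sink on a tail) rather than on it and checks, case by case against these lists, that the resulting quotient remains concealed of a cyclic hyperbolic type. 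Combining the combinatorial chain with the lifted quotients and composing the idempotents obtained at each step into a single idempotent $e$ then completes the induction, yielding $B/(e)$ concealed of hyperbolic type.
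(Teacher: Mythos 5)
Your overall architecture (induct on the number of vertices, realise each single-vertex deletion as a quotient of the concealed algebra by an idempotent à la Happel--Unger, treat the cyclic types separately) matches the paper's, but two essential points are not actually established, and one of them is resolved incorrectly. First, the algebraic step only works if the postprojective tilting module $T$ has an indecomposable direct summand in the $\tau$-orbit of $P(x)$ for the \emph{specific} vertex $x$ you wish to delete: that is the hypothesis under which the Happel--Unger construction yields $B/(e)$ concealed of type $Q\setminus\{x\}$. This existence is guaranteed by \cite{HU89} only for tips (when $Q$ is not twice edge-connected) and for one particular vertex (when it is); for an arbitrary vertex produced by your combinatorial enumeration it can fail. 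Writing ``for a suitable choice of $T_x$\ldots arranged by choosing $T_x$ appropriately'' conceals the entire difficulty rather than addressing it; the paper devotes \Cref{lem:at_most_two_summands_in_orbit_of_P(a)} and \Cref{lem:postprojective_tilting_modules_of_type_Atildetilde} to an explicit analysis of the postprojective component in type $\qAtildetilde$ precisely to prove that a postprojective tilting module meets the $\tau$-orbit of $P(c)$ for every vertex $c$ other than the branch vertex $b$ (and it may genuinely miss the orbit of $P(b)$).

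Second, your resolution of the $\qAtildetilde$ case does not work. A quiver of type $\qAtildetilde$ has exactly one vertex off the cycle, namely the unique tip $a$, and deleting it yields a quiver of type $\qAtilde$, which is tame --- so ``deleting a vertex lying off the cycle'' terminates the induction at a tame quiver and cannot produce a hyperbolically concealed quotient. The correct move, as in the paper, is to delete a vertex \emph{on} the cycle sufficiently far from $b$ when $Q$ has at least $10$ vertices, so that the remaining tree contains $\qEtildetilde_7$ and is therefore wild; the substantive content is then exactly the $\tau$-orbit statement above for that cycle vertex, which neither Kerner's list of hyperbolic quivers nor Unger's classification supplies on its own. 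Until you prove that orbit-hitting lemma and repair the choice of deleted vertex in the cyclic case, the argument has a genuine gap.
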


\begin{definition}
    Let $Q$ be a quiver.
    \begin{enumerate}
        \item A vertex $v$ in a quiver $Q$ is called a \textit{tip} if it has precisely one neighboring vertex, to which it is connected by exactly one edge.\footnote{Clearly, a tip must either be a sink or a source of the quiver.} 
        \item The quiver $Q$ is \textit{twice edge-connected} if every vertex of $Q$ is adjacent to at least two edges, or equivalently if $Q$ does not have a tip.
    \end{enumerate}
\end{definition}

The following results of Happel--Unger will be instrumental in our proof of \Cref{thm:red_to_hyperbolic}.
\begin{lemma}\cite{HU89}\label{lem:HU89}
    Let $Q$ be a connected acyclic wild quiver with more than two vertices, let $T$ be a postprojective tilting $\K Q$-module and let $B=\End_{\K Q}(T)\op$ be the corresponding concealed $\K$-algebra of type $Q$.
    \begin{enumerate}
        \item\label{lem:HU89_1.1} If $Q$ is not twice edge-connected and $v$ is a tip of $Q$, then the subquiver $Q' = Q \setminus \{v\}$ is connected and representation-infinite \cite[§1.1]{HU89}.
        \item\label{lem:HU89_1.2} If $Q$ is twice edge-connected, there exists a vertex $v$ in $Q$ such that the subquiver $Q'= Q\setminus \{v\}$ is connected and representation-infinite \cite[§1.2]{HU89}.
        \item\label{lem:HU89_1.1+2} Let $v$ be as in \eqref{lem:HU89_1.1} or \eqref{lem:HU89_1.2}, depending on whether $Q$ is twice edge-connected. There exists an indecomposable direct summand $T_1$ of $T$ belonging to the $\tau$-orbit of the projective $\K Q$-module $P(v)$ \cite[§1.1 and §1.2]{HU89}.
    \end{enumerate}
\end{lemma}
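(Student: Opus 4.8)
The plan is to treat connectedness and representation-infiniteness of $Q'$ separately, and then to extract the summand $T_1$ from the combinatorics of the postprojective component. Connectedness is the soft part: deleting a tip never disconnects a connected quiver with more than one vertex, which settles the connectedness claim in \eqref{lem:HU89_1.1}; and every finite connected graph with at least two vertices has at least two non-cut vertices (for instance the leaves of any spanning tree), so in the twice edge-connected situation of \eqref{lem:HU89_1.2} there is no obstruction to choosing $v$ with $Q'$ connected. The genuine content is to guarantee, in both cases, that $Q'$ stays representation-infinite while respecting these connectivity constraints.

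For representation-infiniteness I would use the characterisation of representation type by the Tits form: a connected quiver is representation-finite exactly when it is Dynkin, and every representation-infinite quiver contains a connected Euclidean full subquiver, the Euclidean quivers being precisely the minimal (\emph{critical}) representation-infinite ones. Since $Q$ is \emph{wild} it is itself not Euclidean, so any such Euclidean full subquiver $C\subseteq Q$ is \emph{proper}; in particular $Q_0\setminus C_0\neq\varnothing$. If the deleted vertex $v$ can be chosen in $Q_0\setminus C_0$, then $C$ survives as a full subquiver of $Q'=Q\setminus\{v\}$, forcing $Q'$ to be representation-infinite. Thus the whole problem reduces to locating a Euclidean subquiver $C$ together with a vertex $v\notin C$ that is a tip (in case \eqref{lem:HU89_1.1}) or a non-cut vertex (in case \eqref{lem:HU89_1.2}).

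This coordination is the main obstacle, and is exactly the delicate bookkeeping carried out in \cite[\S1.1 and \S1.2]{HU89}. The subtlety is real: it is \emph{not} the case that deleting an arbitrary tip preserves representation-infiniteness --- for example the wild star with arms of lengths $1,3,4$ has a tip whose removal yields the Dynkin quiver $\qE_8$ --- so $v$ must be selected with care, and this is why the tip is taken to be a suitable one. When every tip (resp. every available non-cut vertex) happens to lie inside a first choice of $C$, one either enlarges $C$ along a connecting path and deletes a free endpoint of that path, or re-chooses $C$; since the Euclidean quivers form the explicit finite list $\qAtilde$, $\qDtilde$, $\qEtilde$, this becomes a finite case analysis on the possible shapes of $C$ and on how it is attached to the rest of $Q$. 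I would organise that analysis by how many neighbours the exceptional vertices of $C$ have in $Q$, which is precisely the point at which the tip versus twice edge-connected dichotomy enters.

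Finally, for \eqref{lem:HU89_1.1+2} I would pass to the postprojective component $\mathcal{P}$ of $\K Q$. For $Q$ connected and representation-infinite, $\mathcal{P}$ has exactly $n=|Q_0|$ different $\tau$-orbits, namely $\{\tau^{-i}P(w)\}_{i\geq 0}$ indexed by the vertices $w$, while a basic postprojective tilting module $T$ has exactly $n$ indecomposable summands, all lying in $\mathcal{P}$. It therefore suffices to show that no two summands share a $\tau$-orbit; pigeonhole then forces exactly one summand in each orbit, and in particular one in the orbit of $P(v)$. To rule out two summands $\tau^{-a}P(w)$ and $\tau^{-b}P(w)$ with $a<b$ occurring in a rigid module, I would invoke the Auslander--Reiten formula $\Ext^1_{\K Q}(X,Y)\cong \duality\,\overline{\Hom}_{\K Q}(Y,\tau X)$ together with the fact that postprojective modules are never injective: taking $X=\tau^{-b}P(w)$ and $Y=\tau^{-a}P(w)$ gives $\Ext^1_{\K Q}(\tau^{-b}P(w),\tau^{-a}P(w))\cong \duality\,\overline{\Hom}_{\K Q}(\tau^{-a}P(w),\tau^{-(b-1)}P(w))$, and since $\tau^{-(b-1)}P(w)$ is a (possibly equal) successor of $\tau^{-a}P(w)$ in $\mathcal{P}$ this $\overline{\Hom}$-space is nonzero, contradicting the rigidity of $T$. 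Hence the required indecomposable summand $T_1$ in the $\tau$-orbit of $P(v)$ exists.
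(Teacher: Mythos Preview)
The paper does not give its own proof of this lemma, deferring entirely to \cite{HU89}, so there is no direct comparison to make. That said, your argument for \eqref{lem:HU89_1.1+2} contains a genuine gap. Your pigeonhole step rests on the claim that no two summands of a rigid postprojective module can lie in the same $\tau$-orbit, which you justify by asserting that $\overline{\Hom}_{\K Q}(\tau^{-a}P(w),\tau^{-(b-1)}P(w))\neq 0$ whenever $a<b$. This is false: being further to the right in the postprojective component does not force a nonzero morphism, since mesh relations can annihilate composites of irreducible maps. Concretely, the paper itself computes in \Cref{lem:at_most_two_summands_in_orbit_of_P(a)} that for $Q$ of type $\qAtildetilde_n$ with tip $a$ one has $\Hom_{\K Q}(P(a),\tau^{-1}P(a))=0$, so $P(a)\oplus\tau^{-2}P(a)$ is rigid and does occur inside postprojective tilting modules; \Cref{lem:postprojective_tilting_modules_of_type_Atildetilde} then shows that in this situation two summands sit in the orbit of $P(a)$ while none sits in the orbit of its neighbour $P(b)$. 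This is exactly why the proof of \Cref{thm:red_to_hyperbolic} treats the $\qAtildetilde$ case separately rather than appealing to \eqref{lem:HU89_1.1+2} alone.

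Your suspicion about \eqref{lem:HU89_1.1} as literally stated is well founded: the star with arms $1,3,4$ is wild, yet deleting the short-arm tip yields $\qA_8$. The paper tacitly concedes this by invoking \Cref{lem:hyperbolic_tips} in the proof of \Cref{thm:red_to_hyperbolic} to \emph{select} a tip whose removal stays wild, rather than using an arbitrary one; so what is actually needed, and what \cite{HU89} supplies, is a suitably chosen $v$ together with the summand in its $\tau$-orbit, not the universal claim your sketch tries to salvage.
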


\begin{lemma}\cite{HU89}\label{lem:remove_vertex_in_tau_orbit}
    Let $Q$ be a quiver, let $v$ be a vertex such that $Q' \defeq Q\setminus\{v\}$ is connected representation-infinite, and let $P(v)$ denote the indecomposable projective $\K Q$-module corresponding to $v$. Let $T$ be a postprojective tilting ${\K Q}$-module and let $T_1$ be an indecomposable direct summand of $T$ such that $T_1 = \tau^{-r}P(v)$ for some $r$. Let $B= \End_{\K Q}(T)\op$ and let $e$ be the idempotent such that $Be = \Hom(T, T_1)$. Then $B/(e)$ is concealed of type $Q'$.
    \begin{proof}
        This is proven in \cite{HU89}. Note that they assume that $v$ is a tip whenever $Q$ is not twice edge-connected, but that this assumption is only used to guarantee the existence of a a summand equal to $\tau^{-r}P(v)$, and is not necessary for the remaining argument. 
    \end{proof}
\end{lemma}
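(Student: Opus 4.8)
The plan is to identify $B/(e)$ with the endomorphism algebra of a postprojective tilting module over the hereditary algebra $\K Q'$, passing through the perpendicular category of $T_1$. Write $T = T'\oplus T_1$, so that the idempotent $e$ is the projection onto the summand $\Hom_{\K Q}(T,T_1)$. First I would record the purely ring-theoretic identification
\[
B/(e) \;=\; B/(BeB) \;\cong\; \End_{\K Q}(T')\,/\,\mathcal{I},
\]
where $\mathcal{I}$ is the ideal of endomorphisms of $T'$ that factor through $\add T_1$. This comes from viewing $B=\End_{\K Q}(T'\oplus T_1)$ as a matrix algebra: one has $(1-e)B(1-e)=\End_{\K Q}(T')$, while $(1-e)(BeB)(1-e)$ is exactly the maps $T'\to T'$ factoring through $\add T_1$, and $B/(BeB)\cong (1-e)B(1-e)/(1-e)(BeB)(1-e)$.

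Next I would bring in the perpendicular category. Since $\K Q$ is hereditary and $T_1$ is exceptional (indecomposable and rigid, being a summand of a tilting module), the right perpendicular category $T_1^{\perp}=\{M:\Hom_{\K Q}(T_1,M)=0=\Ext^1_{\K Q}(T_1,M)\}$ is, by the Geigle--Lenzing and Schofield theory of perpendicular categories, equivalent to $\mod H'$ for a connected hereditary algebra $H'$ with one fewer simple than $\K Q$. The crucial point is to pin down the type of $H'$. For the projective $P(v)$ one computes directly that $\Hom_{\K Q}(P(v),M)\cong M_v$ and $\Ext^1_{\K Q}(P(v),-)=0$, so that $P(v)^{\perp}=\{M:M_v=0\}\cong \mod \K Q'$ is visibly of type $Q'$. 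Because $T_1=\tau^{-r}P(v)$ and the Auslander--Reiten translation extends to the autoequivalence $\tau=\nu[-1]$ of $\Dbmod{\K Q}$, the autoequivalence $\tau^{-r}$ carries the perpendicular category of $P(v)$ in $\Dbmod{\K Q}$ onto that of $T_1$; hence $H'$ is derived equivalent to $\K Q'$, and as connected hereditary algebras that are derived equivalent share the same underlying graph, $H'$ is of type $Q'$.

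The third step is to transport $T'$ into $T_1^{\perp}$ and recognise the quotient of the first step there. The inclusion $T_1^{\perp}\hookrightarrow \mod \K Q$ is exact and admits a reflection functor $\ell$, and I would set $U\defeq \ell(T')=\bigoplus_{i\neq 1}\ell(T_i)$. The perpendicular-category formalism then gives that $U$ is a basic tilting $H'$-module with $|Q'_0|=n-1$ indecomposable summands, and the adjunction defining $\ell$ matches $\End_{H'}(U)$ with $\End_{\K Q}(T')/\mathcal{I}$; combined with the first step this yields $B/(e)\cong \End_{H'}(U)$, so $B/(e)$ is at least \emph{tilted} of type $Q'$.

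What remains, and what I expect to be the main obstacle, is to upgrade ``tilted'' to ``\emph{concealed},'' i.e. to verify that $U$ is a \emph{postprojective} $H'$-module. Every summand of $T$ is postprojective over $\K Q$, so the task is to show that $\tau^{-r}$ and the reflection $\ell$ send the postprojective component of $\K Q$ into the postprojective component of $\K Q'$. This is precisely where the hypotheses that $Q'$ be connected and representation-infinite are indispensable: they guarantee a well-defined infinite postprojective component in $\mod \K Q'$ to receive the images, and that the AR-component structure is rigid enough to be tracked through the perpendicular equivalence. Controlling the position of $U$ inside the Auslander--Reiten quiver of $\K Q'$ is the delicate part of the argument supplied by Happel--Unger; note that their tip hypothesis is used only to \emph{produce} a summand of the form $\tau^{-r}P(v)$, which is granted here as a hypothesis, so it plays no role once $T_1$ is in hand.
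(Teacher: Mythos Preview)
The paper's own proof is a bare citation to Happel--Unger together with the remark that the tip assumption there serves only to \emph{produce} a summand $\tau^{-r}P(v)$ and is irrelevant once such a summand is given. You make exactly this observation, so on that level you match the paper. Where you go further is in sketching an actual argument via perpendicular categories, which is not what the paper does (nor, judging from the paper's appendix where a ``tilting $\K Q'$-complex $\tilde N^\bullet$'' is invoked, what Happel--Unger do: their route is closer to applying $\tau^{r}$ in $\Dbmod{\K Q}$ to move $T_1$ back to $P(v)$, then restricting to $Q'$ and tracking the resulting tilting complex directly).

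Your perpendicular-category framing is a legitimate alternative, but Step~3 has a genuine gap. The claim that the adjunction $\ell\dashv\iota$ alone yields $\End_{H'}(\ell T')\cong \End_{\K Q}(T')/\mathcal{I}$ is not justified: adjunction gives $\End_{H'}(\ell T')\cong \Hom_{\K Q}(T',\iota\ell T')$, and to identify this with $\End_{\K Q}(T')/\mathcal{I}$ you would need to know that post-composition with the unit $\eta\colon T'\to\iota\ell T'$ is surjective with kernel precisely the maps factoring through $\add T_1$. Neither is automatic; in general $T'\notin T_1^{\perp}$, the unit need not be epi, and the localization kernel of $\ell$ is the Serre subcategory generated by $T_1$, not $\add T_1$ itself. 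One way to repair this is to work instead in the derived category and use that $\tau^{r}$ is an autoequivalence carrying $T$ to a tilting complex with $P(v)$ as a summand, so that factoring out the corresponding idempotent literally becomes restriction to $\mod\K Q'$; this is essentially the Happel--Unger computation, and it sidesteps the adjunction issue entirely. Your final step (showing the resulting tilting object is postprojective) is, as you acknowledge, where the connectedness and representation-infiniteness of $Q'$ really bite, and deferring that to Happel--Unger is in line with the paper.
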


We will also need some technical lemmata.

\begin{lemma}\label{lem:hyperbolic_tips}
    Let $Q$ be a wild tree. Suppose that each subquiver of the form $Q\setminus\{v\}$, where $v$ is a tip, is a tame quiver. Then $Q$ is a hyperbolic quiver. 
\end{lemma}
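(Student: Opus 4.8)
The plan is to verify the definition of hyperbolicity directly. Since $Q$ is assumed wild, it remains only to show that $Q\setminus\{v\}$ is a union of tame quivers for \emph{every} vertex $v$, not just for the tips; equivalently, in the language of the path algebra, that $\K Q/(e_v)\cong \K(Q\setminus\{v\})$ is tame for each primitive idempotent $e_v$. For a tip $v$ this is exactly the hypothesis, so the entire content of the lemma is to upgrade the hypothesis from tips to arbitrary vertices. Note that, as $Q$ is a tree, it has no multiple edges, so ``tip'' coincides with ``vertex of degree one''. The single external fact I would invoke is the monotonicity of representation type: if a quiver contains a wild full subquiver, then it is itself wild. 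This holds because the Tits form of a full subquiver is the restriction of the ambient Tits form, and indefiniteness of a quadratic form is inherited by the whole space once a restriction is indefinite.

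I would argue by contradiction. Suppose some vertex $v$ has $Q\setminus\{v\}$ wild. By hypothesis $v$ is not a tip, so $v$ has degree at least two in the tree $Q$, and removing $v$ disconnects $Q$ into at least two subtrees, at least one of which, call it $C$, is wild. Since $v$ has a second neighbour, there is another component $D\neq C$ of $Q\setminus\{v\}$. The key geometric step is to locate a tip of $Q$ inside $D$: take a vertex $w\in D$ at maximal graph distance from $v$. Its only neighbour in $Q$ is the next vertex on the (unique) path towards $v$, since any further neighbour would lie strictly farther from $v$, contradicting maximality. Hence $w$ has degree one in $Q$, that is, $w$ is a tip, and $w\in D$ forces $w\notin C$.

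Finally I would assemble the contradiction. Because $w\notin C$, the wild full subquiver $C$ is contained in $Q\setminus\{w\}$ and remains a full subquiver there, so $Q\setminus\{w\}$ is wild by the monotonicity fact. But $w$ is a tip, so the hypothesis guarantees that $Q\setminus\{w\}$ is tame, a contradiction. Therefore $Q\setminus\{v\}$ is tame for every vertex $v$, and together with the wildness of $Q$ this is precisely the definition of a hyperbolic quiver.

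I expect the only genuinely delicate point to be placing the tip $w$ \emph{outside} the wild component $C$: it is tempting to pick an arbitrary tip of $Q$, but one must exploit the branching of the tree at $v$ to ensure the chosen tip avoids $C$. The maximal-distance argument in the middle paragraph is what makes this precise; everything else is bookkeeping together with the standard inheritance of wildness under full subquivers.
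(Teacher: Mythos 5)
Your proposal is correct and follows essentially the same route as the paper: assume an internal vertex $v$ with $Q\setminus\{v\}$ wild, isolate a wild component $C$, find a tip of $Q$ in a different component, and contradict the hypothesis via inheritance of wildness by full subquivers. Your maximal-distance argument for locating a tip of $Q$ (rather than merely a tip of the component) outside $C$ is in fact slightly more careful than the paper's own wording, which elides this point.
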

\begin{proof}
    If $Q$ were not hyperbolic, there would exist a vertex $x$ that is not a tip such that $Q\setminus\{x\}$ is wild. The quiver $Q\setminus\{x\}$ has more than one connected component, and at least one of which is wild. Let $C_w$ be a wild connected component of $Q\setminus\{x\}$, let $C$ be a different connected component and let $v$ be a tip of $Q$ contained in $C$. Then $Q\setminus\{v\}$ contains $C_w$ as a subquiver, making $Q\setminus\{v\}$ wild, contradicting our assumption on the tips of $Q$. Having reached a contradiction under the assumption of non-hyperbolicity, we conclude that $Q$ must be hyperbolic.
\end{proof}

The previous two lemmata are already enough for us to reduce concealed algebras of tree-type. We now turn our attention to understanding postprojective modules in algebras of type $\qAtildetilde$. These observations seem to be a part of the folklore, but we find it beneficial to state and prove them explicitly.

\begin{lemma}\label{lem:at_most_two_summands_in_orbit_of_P(a)}
    Let $Q$ be a quiver of type $ \qAtildetilde_n$ for $n>3$, and $a\in Q_0$ the unique tip of the quiver. Consider an indecomposable $\K Q$-module $M$  in the $\tau$-orbit of $P(a)$, then for $r\geq 1$
    \(
    \Ext_{\K Q}^{1}({\tau^{-r}M},\,{M})=0 
    \) only if $r=2$. 
\end{lemma}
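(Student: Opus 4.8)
The plan is to convert the vanishing of $\Ext^1$ into an elementary statement about one coordinate of a dimension vector. Write $H=\K Q$ and note that, since $H$ is hereditary and $\tau^{-r}M$ is a non-projective postprojective module for every $r\geq 1$, the Auslander--Reiten formula yields $\Ext^1_H(\tau^{-r}M,M)\cong\duality\Hom_H(M,\tau(\tau^{-r}M))=\duality\Hom_H(M,\tau^{-(r-1)}M)$. The overline that normally appears in the Auslander--Reiten formula may be suppressed here, because no nonzero morphism from $M$ to the postprojective module $\tau^{-(r-1)}M$ factors through an injective: over a connected representation-infinite hereditary algebra there are no nonzero maps from preinjective to postprojective modules. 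Hence the assertion is equivalent to $\Hom_H(M,\tau^{-(r-1)}M)=0$ precisely when $r=2$.

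Next I would remove the shift by $M$. Writing $M=\tau^{-s}P(a)$ and using that $\tau$ extends to an autoequivalence of $\mathcal D^{\mathrm b}(H)$ restricting to isomorphisms of $\Hom$-spaces between postprojective modules, one gets $\Hom_H(M,\tau^{-k}M)\cong\Hom_H(P(a),\tau^{-k}P(a))$ for $k=r-1$. As $P(a)$ is projective, $\Hom_H(P(a),X)\cong e_aX$, so that $\dim_\K\Hom_H(P(a),\tau^{-k}P(a))=\dimvec{\tau^{-k}P(a)}_a=:g(k)$, and moreover $\dimvec{\tau^{-k}P(a)}=\Phi^{-k}\dimvec{P(a)}$, where $\Phi$ is the Coxeter transformation of $Q$. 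The lemma is thereby reduced to the combinatorial claim that $g(0)=1$, that $g(1)=0$, and that $g(k)\geq 1$ for all $k\geq 2$.

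The value $g(0)=\dimvec{P(a)}_a=1$ is clear. For $g(1)$ the tip hypothesis does the work: because $a$ has a single neighbour $b$ and a single incident edge, the Auslander--Reiten sequence starting at $P(a)$ has exactly one indecomposable middle term $E$, namely $E=P(b)$ if $a$ is a sink and $E=\tau^{-1}P(b)$ if $a$ is a source. In either case $g(1)=\dimvec{E}_a-1$, and a direct check gives $\dimvec{E}_a=1$: in the sink case $\dimvec{P(b)}_a$ counts the paths $b\to a$, of which there is exactly one, while in the source case the inclusion $P(a)\hookrightarrow\tau^{-1}P(b)$ forces $\dimvec{\tau^{-1}P(b)}_a\geq 1$, and equality follows from a short computation in the cycle-with-pendant $Q$ (the neighbours $d$ of $b$ on the cycle satisfy $\dimvec{\tau^{-1}P(d)}_a=0$ since $a$ lies at distance two from them). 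Thus $g(1)=0$.

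It remains to prove $g(k)\geq 1$ for $k\geq 2$, i.e. that $a\in\operatorname{supp}\tau^{-k}P(a)$; the base case $g(2)\geq 1$ (indeed $\tau^{-2}P(a)$ is sincere) follows by knitting two steps of the postprojective component from $P(a)$. The main obstacle is to make the positivity uniform, both in $k$ and across the infinite family of quivers $\qAtildetilde_{p,q}$. The asymptotics are favourable: the Coxeter transformation of a hyperbolic quiver has a simple dominant real eigenvalue $\lambda>1$ with a strictly positive eigenvector, so $\Phi^{-k}\dimvec{P(a)}$ eventually has all coordinates positive; the difficulty is upgrading ``eventually positive'' to ``positive for every $k\geq 2$''. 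I would close this gap by proving that $(g(k))_{k\geq 1}$ is non-decreasing, reading the mesh identities $g(k{+}1)+g(k)=\dimvec{E_k}_a$ (with $E_k$ the middle term of the Auslander--Reiten sequence at $\tau^{-k}P(a)$) against the directedness of the postprojective component; failing a clean monotonicity argument, the safe fallback is to compute $\dimvec{\tau^{-k}P(a)}$ outright from the cyclic combinatorics of $Q$ and read off the $a$-coordinate. This explicit computation is the part I expect to be most laborious.
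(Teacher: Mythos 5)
Your reduction is sound and matches the paper's opening moves: both arguments apply the Auslander--Reiten formula (with the stable Hom replaced by ordinary Hom, for exactly the reason you give), shift to $M=P(a)$, and convert the problem into the non-vanishing of $\Hom_H(P(a),\tau^{-k}P(a))$ for $k=0$ and all $k\geq 2$ together with its vanishing for $k=1$. Your treatment of $k=0$ and $k=1$ via the tip hypothesis is fine. The genuine gap is the case $k\geq 2$, which is the entire content of the lemma, and your proposal does not close it: you offer two candidate strategies and complete neither. The first, monotonicity of $g(k)=\dimvec{\tau^{-k}P(a)}_a$, does not follow from the mesh identity $g(k)+g(k+1)=\dimvec{E_k}_a$ alone --- that identity gives no lower bound on $g(k+1)$ in terms of $g(k)$ without extra input about $E_k$, and "directedness of the postprojective component" is not by itself enough to supply it. The second, an explicit closed-form computation of $\Phi^{-k}\dimvec{P(a)}$ uniformly in $k$ and in the shape $\qAtildetilde_{p,q}$, is precisely the laborious infinite verification you acknowledge not having done. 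As you correctly note, Perron--Frobenius only gives eventual positivity, so something is needed to cover every $k\geq 2$, and that something is missing.

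The paper closes this gap with a device worth knowing: it knits only the first few meshes of the postprojective component (three cases, $p>2$, $p=2$, $p=1$, after normalizing $Q$ by APR tilts), reads off that $\Hom(P(a),\tau^{-2}P(a))$ and $\Hom(P(a),\tau^{-3}P(a))$ are nonzero, and then invokes the dual of a lemma of Unger to upgrade these to $\tau^{-}$-stable \emph{monomorphisms} $P(a)\to\tau^{-2}P(a)$ and $P(a)\to\tau^{-3}P(a)$. Since every integer $k\geq 2$ is of the form $2x+3y$ with $x,y\geq 0$, composing $\tau^{-}$-shifted copies of these two monomorphisms produces an injective, hence nonzero, map $P(a)\to\tau^{-k}P(a)$ for every $k\geq 2$. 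This replaces your open-ended induction or global eigenvector estimate with a finite computation plus a numerical-semigroup observation. If you want to salvage your route, you would either need to prove injectivity of enough irreducible maps along the $\tau$-orbit of $P(a)$ to force monotonicity (which is essentially Unger's lemma again), or genuinely carry out the Coxeter-iteration computation for all $p\leq q$; as written, the claim $g(k)\geq 1$ for all $k\geq 2$ remains unproved.
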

\begin{proof}
    Through the Auslander--Reiten-formula and $\tau^-$-shift we see that
    \[
    \Ext_{\K Q}^{1}({\tau^{-r}M},\,{M})\cong D\Hom_{\K Q}({P(a)},\,{\tau^{-r+1}P(a)}),
    \]
    so we may assume that $M=P(a)$, and we are concerned with when $\Hom_{\K Q}({P(a)},\,{\tau^{-r+1}P(a)})=0$. By iterated APR-tilting (see \cite{APR79}), we know that $\K Q$ is concealed of type $\qAtildetilde_{p,q}$ with $p\leq q$ such that the clockwise and anti-clockwise arrows are collected. For the purposes of considering Hom-spaces in the postprojective component, we may therefore assume that $Q$ is of the form indicated in \eqref{eq:Apq_spec}. Let us consider three cases: $p>2$, $p=2$, and $p=1$. In the first case, we have $Q$ of the form
    \begin{equation}\label{eq:Apq_spec}
    \begin{tikzpicture}[scale=.7, baseline=(current bounding  box.center)]
        \node[label=right:{$a$}] (a) at (3,0) {$\bullet$};
        \node[label=left:{$b$}] (b) at (0:1.5) {$\bullet$};
        \node[] (c) at (60:1.5) {$\bullet$};
        \node at (60:2) []{$\alpha$};
        \node[] (f) at (120:1.5) {$\bullet$};
        \node at (120:2) [] {$\beta$};
        
        \node (dotsStart) at (170:1.5) [] {};
        \draw[fill] (175:1.5) circle (.5pt);
        \draw[fill] (180:1.5) circle (.5pt);
        \draw[fill] (185:1.5) circle (.5pt);
        \node (dotsEnd) at (190:1.5) [] {};

        \node[] (e) at (240:1.5) {$\bullet$};
        \node at (240:2) [] {$\hat{\beta}$};
        \node[] (d) at (300:1.5) {$\bullet$};
        \node at (300:2) [] {$\hat{\alpha}$};
        
        \draw[-latex] (dotsStart) to[out=70,in=220] (f);
        \draw[-latex] (dotsEnd) to[out=-70,in=140] (e);
        \draw[-latex] (f) to[out=25,in=155] (c);
        \draw[-latex] (e) to[out=-25,in=-155] (d);
        \draw[-latex] (d) to[out=40, in=-100] (b);
        \draw[-latex] (c) to[out=-40, in=100] (b);
        \draw[-latex] (b)--(a);
    \end{tikzpicture}
        \end{equation}
   At the beginning of the postprojective component in the AR-quiver of $\K Q$, we obtain the below subquiver, where we denote projectives by their corresponding nodes in $Q$.
    \[
    \begin{tikzpicture}[yscale=.6,xscale=.6,every node/.style={scale=.8,minimum width={width"{$\tau^{-3}a$}"}}]
        \node (Pa) at (0,1.8) [] {$a$};
        \node (Pb) at (2,1) [] {$b$};
        \node (Pc) at (4,3) [] {$\alpha$};
        \node (Pf) at (6,5) [] {$\beta$};
        \node (Pd) at (4,-1) [] {${\hat{\alpha}}$};
        \node (Pe) at (6,-3) [] {${\hat{\beta}}$};
        
        \node (t-Pa) at (4,1.8) [] {$\tau^-a$};
        \node (t-Pb) at (6,1) [] {$\tau^-b$};
        \node (t-Pc) at (8,3) [] {$\tau^-\alpha$};
        \node (t-Pd) at (8,-1) [] {$\tau^-{\hat{\alpha}}$};
        
        \node (t-2Pa) at (8,1.8) [] {$\tau^{-2}a$};
        \node (t-2Pb) at (10,1) [] {$\tau^{-2}b$};
        \node (t-3Pa) at (12,1.8) [] {$\tau^{-3}a$};

        \draw[->] (Pa)--(Pb);
        \draw[->] (Pb)--(t-Pa);
        \draw[->] (t-Pa)--(t-Pb);
        \draw[->] (t-Pb)--(t-2Pa);
        \draw[->] (t-2Pa)--(t-2Pb);
        \draw[->] (t-2Pb)--(t-3Pa);
        \draw[->] (Pb)--(Pc);
        \draw[->] (Pc)--(t-Pb);
        \draw[->] (t-Pb)--(t-Pc);
        \draw[->] (t-Pc)--(t-2Pb);
        \draw[->] (Pc)--(Pf);
        \draw[->] (Pf)--(t-Pc);
        \draw[->] (Pb)--(Pd);
        \draw[->] (Pd)--(t-Pb);
        \draw[->] (t-Pb)--(t-Pd);
        \draw[->] (t-Pd)--(t-2Pb);
        \draw[->] (Pd)--(Pe);
        \draw[->] (Pe)--(t-Pd);
        \draw[dashed] (Pa)--(t-Pa) (t-Pa)--(t-2Pa) (t-2Pa)--(t-3Pa) (Pb)--(t-Pb) (t-Pb)--(t-2Pb) (Pc)--(t-Pc) (Pd)--(t-Pd);
    \end{tikzpicture}
    \]
    Applying $\dim\Hom_{\K Q}({P(a)},\,{-})$ on the projectives and then using the exactness of almost split sequences, we can manually compute $\dim\Hom_{\K Q}({P(a)},\,{-})$ for the nodes:
    \[
    \begin{tikzpicture}[yscale=.4,xscale=.4,every node/.style={scale=.7}]
        \node (Pa) at (0,1.8) [] {$1$};
        \node (Pb) at (2,1) [] {$1$};
        \node (Pc) at (4,3) [] {$1$};
        \node (Pf) at (6,5) [] {$1$};
        \node (Pd) at (4,-1) [] {$1$};
        \node (Pe) at (6,-3) [] {$1$};
        
        \node (t-Pa) at (4,1.8) [] {$0$};
        \node (t-Pb) at (6,1) [] {$1$};
        \node (t-Pc) at (8,3) [] {$1$};
        \node (t-Pd) at (8,-1) [] {$1$};
        
        \node (t-2Pa) at (8,1.8) [] {$1$};
        \node (t-2Pb) at (10,1) [] {$2$};
        \node (t-3Pa) at (12,1.8) [] {$1$};

        \draw[->] (Pa)--(Pb);
        \draw[->] (Pb)--(t-Pa);
        \draw[->] (t-Pa)--(t-Pb);
        \draw[->] (t-Pb)--(t-2Pa);
        \draw[->] (t-2Pa)--(t-2Pb);
        \draw[->] (t-2Pb)--(t-3Pa);
        \draw[->] (Pb)--(Pc);
        \draw[->] (Pc)--(t-Pb);
        \draw[->] (t-Pb)--(t-Pc);
        \draw[->] (t-Pc)--(t-2Pb);
        \draw[->] (Pc)--(Pf);
        \draw[->] (Pf)--(t-Pc);
        \draw[->] (Pb)--(Pd);
        \draw[->] (Pd)--(t-Pb);
        \draw[->] (t-Pb)--(t-Pd);
        \draw[->] (t-Pd)--(t-2Pb);
        \draw[->] (Pd)--(Pe);
        \draw[->] (Pe)--(t-Pd);
        \draw[dashed] (Pa)--(t-Pa) (t-Pa)--(t-2Pa) (t-2Pa)--(t-3Pa) (Pb)--(t-Pb) (t-Pb)--(t-2Pb) (Pc)--(t-Pc) (Pd)--(t-Pd);
    \end{tikzpicture}
    \]
which shows us that $\Hom_{\K Q}({P(a)},\,{\tau^{-r}P(a)})$ is nonzero for $r=2,3$. Similar computation for $p=2$ and $p=1$ give the same conclusion:
\[
\begin{tikzpicture}
    \node at (0,0) [anchor=east]{\begin{tikzpicture}[yscale=.4,xscale=.4,every node/.style={scale=.7}]
        \node at (-1,1) [scale=1.5] {$p=2\colon$};
        \node (Pa) at (0,1.8) [] {$1$};
        \node (Pb) at (2,1) [] {$1$};
        \node (Pc) at (4,3) [] {$1$};
        \node (Pf) at (6,5) [] {$2$};
        \node (Pd) at (4,-1) [] {$1$};
        \node (Pe) at (6,-3) [] {$1$};
        
        \node (t-Pa) at (4,1.8) [] {$0$};
        \node (t-Pb) at (6,1) [] {$1$};
        \node (t-Pc) at (8,3) [] {$2$};
        \node (t-Pd) at (8,-1) [] {$1$};
        
        \node (t-2Pa) at (8,1.8) [] {$1$};
        \node (t-2Pb) at (10,1) [] {$3$};
        \node (t-3Pa) at (12,1.8) [] {$2$};

        \draw[->] (Pa)--(Pb);
        \draw[->] (Pb)--(t-Pa);
        \draw[->] (t-Pa)--(t-Pb);
        \draw[->] (t-Pb)--(t-2Pa);
        \draw[->] (t-2Pa)--(t-2Pb);
        \draw[->] (t-2Pb)--(t-3Pa);
        \draw[->] (Pb)--(Pc);
        \draw[->] (Pc)--(t-Pb);
        \draw[->] (t-Pb)--(t-Pc);
        \draw[->] (t-Pc)--(t-2Pb);
        \draw[->] (Pc)--(Pf);
        \draw[->] (Pf)--(t-Pc);
        \draw[->] (Pb)--(Pd);
        \draw[->] (Pd)--(t-Pb);
        \draw[->] (t-Pb)--(t-Pd);
        \draw[->] (t-Pd)--(t-2Pb);
        \draw[->] (Pd)--(Pe);
        \draw[->] (Pe)--(t-Pd);
        \draw[dashed] (Pa)--(t-Pa) (t-Pa)--(t-2Pa) (t-2Pa)--(t-3Pa) (Pb)--(t-Pb) (t-Pb)--(t-2Pb) (Pc)--(t-Pc) (Pd)--(t-Pd);
    \end{tikzpicture}};

    \node at (1,0) [] {and};
    
    \node at (1.5,0) [anchor=west] {\begin{tikzpicture}[yscale=.4,xscale=.4,every node/.style={scale=.7}]
        \node at (-3,1) [scale=1.5] {$p=1\colon$};
        \node (Pa) at (0,1.8) [] {$1$};
        \node (Pb) at (2,1) [] {$1$};
        \node (Pc) at (4,3) [] {$2$};
        \node (Pf) at (6,5) [] {$3$};
        \node (Pd) at (4,-1) [] {$1$};
        \node (Pe) at (6,-3) [] {$1$};
        
        \node (t-Pa) at (4,1.8) [] {$0$};
        \node (t-Pb) at (6,1) [] {$2$};
        \node (t-Pc) at (8,3) [] {$3$};
        \node (t-Pd) at (8,-1) [] {$2$};
        
        \node (t-2Pa) at (8,1.8) [] {$2$};
        \node (t-2Pb) at (10,1) [] {$5$};
        \node (t-3Pa) at (12,1.8) [] {$3$};

        \draw[->] (Pa)--(Pb);
        \draw[->] (Pb)--(t-Pa);
        \draw[->] (t-Pa)--(t-Pb);
        \draw[->] (t-Pb)--(t-2Pa);
        \draw[->] (t-2Pa)--(t-2Pb);
        \draw[->] (t-2Pb)--(t-3Pa);
        \draw[->] (Pb)--(Pc);
        \draw[->] (Pc)--(t-Pb);
        \draw[->] (t-Pb)--(t-Pc);
        \draw[->] (t-Pc)--(t-2Pb);
        \draw[->] (Pc)--(Pf);
        \draw[->] (Pf)--(t-Pc);
        \draw[->] (Pb)--(Pd);
        \draw[->] (Pd)--(t-Pb);
        \draw[->] (t-Pb)--(t-Pd);
        \draw[->] (t-Pd)--(t-2Pb);
        \draw[->] (Pd)--(Pe);
        \draw[->] (Pe)--(t-Pd);
        \draw[dashed] (Pa)--(t-Pa) (t-Pa)--(t-2Pa) (t-2Pa)--(t-3Pa) (Pb)--(t-Pb) (t-Pb)--(t-2Pb) (Pc)--(t-Pc) (Pd)--(t-Pd);
    \end{tikzpicture}};
\end{tikzpicture}
    \]
By the dual of \cite[Lemma 1]{Ung86}, this means there exists $\tau^-$-stable monomorphisms from $P(a)$ to $\tau^{-2}P(a)$ and $\tau^{-3}P(a)$. Now, if $r>1$ is an integer, then it can be written as $2x + 3y$ for $x,y\geq0$. Then the composite map
% https://q.uiver.app/#q=WzAsNyxbMCwwLCJQKGEpIl0sWzEsMCwiXFx0YXVeey0yfVAoYSkiXSxbMiwwLCJcXGNkb3RzIl0sWzMsMCwiXFx0YXVeey0yeH1QKGEpIl0sWzQsMCwiXFx0YXVeey0oMngrMyl9UChhKSJdLFs1LDAsIlxcY2RvdHMiXSxbNiwwLCJcXHRhdV57LXJ9UChhKSJdLFswLDFdLFsxLDJdLFsyLDNdLFszLDRdLFs0LDVdLFs1LDZdXQ==
\[\begin{tikzcd}[column sep=0.5cm]
	{P(a)} & {\tau^{-2}P(a)} & \cdots & {\tau^{-2x}P(a)} & {\tau^{-(2x+3)}P(a)} & \cdots & {\tau^{-r}P(a)}
	\arrow[from=1-1, to=1-2]
	\arrow[from=1-2, to=1-3]
	\arrow[from=1-3, to=1-4]
	\arrow[from=1-4, to=1-5]
	\arrow[from=1-5, to=1-6]
	\arrow[from=1-6, to=1-7]
\end{tikzcd}\]
is a composition of monomorphisms, hence nonzero. Thus, the only value of $r\geq 1$ such that $\Hom_{\K Q}({P(a)},\,{\tau^{-r+1}P(a)})=0$ is $r=2$.
\end{proof}

\begin{lemma}\label{lem:postprojective_tilting_modules_of_type_Atildetilde}
    Let $Q$ be a quiver of type $\qAtildetilde_n$ for $n>3$, and $a\in Q_0$ the unique tip of the quiver with unique neighbour $b\in Q_0$. Let $M$ be any indecomposable $\K Q$-module in the $\tau$-orbit of $P(a)$, and $N$ any indecomposable $\K Q$-module in the $\tau$-orbit of $P(b)$, then $N\oplus M\oplus \tau^{-2}M$ is not rigid.

    Moreover, for any postprojective tilting module $T\in \mod{\K Q}$ and vertex $c\neq b$ of $\qAtildetilde_n$, there is a summand $T'$ of $T$ in the $\tau$-orbit of $P(c)$.
    \begin{proof}
    Like before, we may assume that $P(a)$ is a simple $\K Q$-module. By shifting with $\tau^-$ if necessary, we may assume that $M$ and $N$ are not projective. Then we have an almost split sequence
    \[\begin{tikzcd}
        0 \ar[r] & \tau M \ar[r] & X \ar[r] & M \ar[r] & 0
    \end{tikzcd}\]
    and an integer $r$ such that $\tau^r X = N$. If $r\geq 2$, then by \cref{lem:at_most_two_summands_in_orbit_of_P(a)} we have a morphism $\tau^{r+1}M \to \tau M$, and since every morphism between postprojective $\K Q$-modules is a composition of irreducible maps, this must factor through $\tau^{r+1}M \to \tau^r X$. Hence, there is a nonzero map $N \to \tau M$, meaning $\Ext_{\K Q}^1({M,\, N})\neq 0$. A similar argument applies for $r \leq -3$. We have a map from $M$ to $\tau^{r+1}M$ which must factor through $\tau^{r+1}X \to \tau^{r+1}M$, hence there is a nonzero map $M \to \tau N$. If $r$ is $1$ or $-2$ we can immediately see that we have non-zero maps $N \to \tau M$ and $M \to \tau N$ respectively. Leaving the only possibilities as $r=0$ or $r=-1$. Doing similar analysis replacing $M$ with $\tau^{-2}M$ leaves the only possibilities to be $r=-2$ or $r-3$. Since these restrictions do not overlap, there can be no such $r$. Hence $N \oplus M \oplus \tau^{-2} M$ is not rigid.

    To prove the second part, we first wish to establish that $X \oplus \tau^{-r}X$ is never rigid when $X$ is a postprojective indecomposable $\K Q$-module which is outside of the $\tau$-orbit of $P(a)$ and $r\geq 1$.
    Let $X$ and $Y$ be postprojective indecomposable $\K Q$-modules outside of the $\tau$-orbit of $P(a)$, and let $f\colon X\to Y$ be an irreducible morphism. We wish to show that $f$ is injective. If $Y$ is projective, then this is immediate, so let $Y$ be non-projective.
    Then we have an almost split sequence
    \[\begin{tikzcd}
    0\rar& \tau Y\rar& X\oplus E\rar& Y\rar & 0,
    \end{tikzcd}
    \]
    where $E$ is not zero and has a summand in the $\tau$-orbit of some $P(c)$ for some $c\neq a$. Then $f$ is a pushout of $\tau Y\to E$, and by induction this is injective, hence $f$ is also injective. 

    Consequently, if $X$ is a postprojective indecomposable $\K Q$-module not in the $\tau$-orbit of $P(a)$, we can find a sequence of irreducible monomorphisms from $X$ to $\tau^{-r} X$, and so $\Hom(X, \tau^{-r} X) \neq 0$, when $r\geq 0$. Hence $X \oplus \tau^{-r}X$ is not rigid for $r\geq 1$.
    
    Let now $T$ be a basic postprojective tilting $\K Q$-module, and consider which $\tau$-orbits the direct summands of $T$ occupy. By the preceding discussion, $T$ can have at most one summand in the $\tau$-orbit of $P(c)$ for any $c \neq a$. By \cref{lem:at_most_two_summands_in_orbit_of_P(a)} there can be at most two summands in the $\tau$-orbit of $P(a)$. In the case where there are two summands, then by the first part of this lemma there can be no summand in the $\tau$-orbit of $P(b)$.

    Since the number of summands $T$ has is equal to the number of vertices in $Q$, we see that $T$ must have exactly one summand in each $\tau$-orbit that is not the $\tau$-orbit of $P(a)$ or $P(b)$, and that there are either one or two summands in the $\tau$-orbit of $P(a)$. In particular, we have a summand in the $\tau$-orbit of each $P(c)$ for $c\neq b$.
    \end{proof}
\end{lemma}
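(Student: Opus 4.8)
The statement splits into two independent claims, and the plan is to reduce both to the combinatorics of the postprojective component of $H$. Exactly as in \Cref{lem:at_most_two_summands_in_orbit_of_P(a)}, I would first normalise by iterated APR-tilting so that $P(a)$ becomes a simple projective $H$-module; this only reshuffles $\tau$-orbits and preserves the relevant Hom- and Ext-spaces inside the postprojective component, so it is harmless. For the first claim I would additionally apply a power of $\tau^{-}$ and assume that $M$ and $N$ are non-projective. The structural fact I would exploit throughout is that, since $a$ is a tip with unique neighbour $b$, the almost split sequence
\[
\begin{tikzcd}
0 \ar[r] & \tau M \ar[r] & X \ar[r] & M \ar[r] & 0
\end{tikzcd}
\]
has \emph{indecomposable} middle term $X$, necessarily lying in the $\tau$-orbit of $P(b)$. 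As $N$ also lies in that orbit, there is a unique integer $r$ with $N\cong\tau^{r}X$, and I would phrase the whole argument in terms of $r$.

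For the first claim, I would detect non-vanishing extensions via the Auslander--Reiten formula $\Ext_H^1(A,B)\cong D\Hom_H(B,\tau A)$, converting the problem into one of producing nonzero homomorphisms between postprojectives. When $r\geq 2$, \Cref{lem:at_most_two_summands_in_orbit_of_P(a)} supplies a nonzero map $\tau^{r+1}M\to\tau M$ inside the $\tau$-orbit of $P(a)$; because $a$ is a tip, any such map must factor through the irreducible map $\tau^{r+1}M\to \tau^{r}X=N$, yielding a nonzero map $N\to\tau M$ and hence $\Ext_H^1(M,N)\neq 0$. The cases $r=1$ and $r=-2$ produce an irreducible, hence nonzero, map $N\to\tau M$ resp. $M\to\tau N$ directly from the (suitably shifted) almost split sequence, and the case $r\leq -3$ is dual to $r\geq 2$. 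This shows $M\oplus N$ is rigid only if $r\in\{0,-1\}$. Running the identical analysis with $M$ replaced by $\tau^{-2}M$ (so that $N\cong\tau^{r+2}(\tau^{-2}X)$) shows $\tau^{-2}M\oplus N$ is rigid only if $r\in\{-2,-3\}$. As $\{0,-1\}$ and $\{-2,-3\}$ are disjoint, no value of $r$ makes both pairs rigid, so $N\oplus M\oplus\tau^{-2}M$ is never rigid.

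For the second claim, the key lemma I would establish is that every irreducible map $f\colon X\to Y$ between postprojective indecomposables lying outside the $\tau$-orbit of $P(a)$ is a monomorphism. I would prove this by induction along the postprojective component: if $Y$ is projective the map is the inclusion of an indecomposable summand of the radical, and otherwise $f$ is the pushout of the component $\tau Y\to E$ of the almost split sequence $0\to\tau Y\to X\oplus E\to Y\to 0$. The point is that the vertex carrying $Y$ has at least two neighbours (only $a$ is a tip), at least one of which differs from $a$; hence $E$ has a summand $E_i$ outside the $\tau$-orbit of $P(a)$, the component $\tau Y\to E_i$ is injective by induction, and a single injective component forces $\tau Y\to E$ to be injective, whence $f$ is injective as the pushout of a monomorphism. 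Composing such monomorphisms gives $\Hom_H(Z,\tau^{-s}Z)\neq 0$ for every postprojective $Z$ off the $\tau$-orbit of $P(a)$ and every $s\geq 0$, so $\Ext_H^1(\tau^{-r}Z,Z)\cong D\Hom_H(Z,\tau^{-r+1}Z)\neq 0$ for $r\geq 1$; thus a rigid module has at most one summand in each such orbit.

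Finally I would combine the counts. By \Cref{lem:at_most_two_summands_in_orbit_of_P(a)} a basic postprojective tilting module $T$ has at most two summands in the $\tau$-orbit of $P(a)$, and by the first claim, having two there forces it to have none in the $\tau$-orbit of $P(b)$. Since $T$ has exactly $n=|Q_0|$ indecomposable summands distributed among the $n$ postprojective $\tau$-orbits, the bounds ``at most one off the $a$-orbit, at most two on it'' can sum to $n$ only if every orbit other than that of $P(b)$ contains exactly one summand; in particular, for each vertex $c\neq b$ there is a summand of $T$ in the $\tau$-orbit of $P(c)$. I expect the main obstacle to be the injectivity lemma: making the pushout/induction argument precise (in particular verifying that some component $\tau Y\to E_i$ lands off the $a$-orbit and is covered by the inductive hypothesis), together with justifying the factorisation-through-$N$ step in the first claim, both of which hinge delicately on $a$ being a tip.
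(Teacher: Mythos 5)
Your proposal is correct and follows essentially the same route as the paper's proof: the same normalisation via APR-tilting, the same parametrisation of $N$ as $\tau^r X$ with the case analysis forcing $r\in\{0,-1\}$ versus $r\in\{-2,-3\}$, the same inductive pushout argument showing irreducible maps off the $\tau$-orbit of $P(a)$ are monomorphisms, and the same summand count. The extra justifications you flag (indecomposability of $X$, and why $E$ has a summand outside the $a$-orbit) are correct refinements of points the paper leaves implicit.
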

    \begin{proof}[Proof of \Cref{thm:red_to_hyperbolic}]
        One may assume $Q$ to be connected, since it is simply a matter of treating each connected component otherwise.
        We proceed by induction on the number of vertices $n$ in $Q$. If $n$ equals 1, then $Q$ cannot be wild, so the statement holds vacuously. If $n=2$, then $Q$ must be hyperbolic since no proper full subquiver may be wild, and so $B$ is already hyperbolically concealed. This establishes the base case.

        To prove the inductive step, suppose that our claim holds for all quivers having fewer than $\ell$ vertices, for some $\ell > 2$. Suppose that $Q$ has $\ell$ vertices. If $Q$ is hyperbolic, we are done. So assume that $Q$ is not hyperbolic. Then we wish to show that there exists a proper wild subquiver $Q'$ and an idempotent $e$ such that $B/(e)$ is concealed of type $Q'$. This will be achieved by applying the previous lemmata.

        If $Q$ is a tree, then \cref{lem:hyperbolic_tips} tells us that we can find a tip that yields a wild subquiver $Q'$ when removed. Then, by \Cref{lem:HU89} and \cref{lem:remove_vertex_in_tau_orbit}, there is an idempotent $e$ such that $B/(e)$ is concealed of type $Q'$.

        If $Q$ is twice edge-connected and not hyperbolic, then there is a vertex $v$ such that $Q' \defeq Q \setminus \{v\}$ is wild. It is not hard to see that we may pick $v$ so that $Q'$ is connected. Again, by \Cref{lem:HU89} and \cref{lem:remove_vertex_in_tau_orbit}, there is an idempotent $e$ such that $B/(e)$ is concealed of type $Q'$.

        Lastly, if $Q$ is not a tree and not twice edge-connected, then removing a tip yields a subquiver $Q'$ which is not a tree. In this case, $Q'$ is either wild or of type $\qAtilde$. If $Q'$ is wild we may argue as before. If $Q'$ is of type $\qAtilde$, then $Q$ is of type $\qAtildetilde$. If $Q$ has fewer than 10 vertices, then $Q$ is hyperbolic and we are done. If $Q$ has 10 or more, then removing a vertex sufficiently far from the vertex of order 3 (named $b$ in \eqref{eq:Apq_spec}) yields a subquiver $Q'$ that has $\qEtildetilde_7$ as a subquiver, so it is wild. By \cref{lem:postprojective_tilting_modules_of_type_Atildetilde} and \cref{lem:remove_vertex_in_tau_orbit}, there is an idempotent $e$ such that $B/(e)$ is concealed of type $Q'$.

        This covers all the cases, and hence by induction we can always find an idempotent $e$ such that $B/(e)$ is hyperbolically concealed.
    \end{proof}

In \Cref{sec:red}, we apply \Cref{thm:red_to_hyperbolic} to prove results on $\tau$-tilting reduction of concealed $\K$-algebras.
\label{sec:3.1-concealed_typeA}

\subsection{Wild concealed algebras are not \texorpdfstring{$\mathbf{g}$}{g}-tame}
Let $\Lambda$ be a finite-dimensional $\K$-algebra.
The $\Z$-bilinear pairing
\begin{equation*}
    \begin{tikzcd}
            \mathrm{K}_0(\proj{\Lambda})\times \mathrm{K}_0(\mod{\Lambda}) \arrow[r,"{\langle - , - \rangle}"] & \Z \\
        {([P],[M])} \arrow[r,mapsto]\arrow[u,phantom, sloped,"\in"] & \mathrm{dim}(\Hom_{\Lambda}(P,M))\arrow[u,phantom, sloped,"\in"]
    \end{tikzcd}
\end{equation*}
induces an $\R$-bilinear pairing
\begin{equation*}
    \begin{tikzcd}[column sep=4em]
            \mathrm{K}_0(\proj{\Lambda})_{\R}\times \mathrm{K}_0(\mod{\Lambda})_{\R} \arrow[r,"{\langle - , - \rangle}"] & \R. 
    \end{tikzcd}
\end{equation*}
Given an element $\theta\in \mathrm{K}_0(\proj{\Lambda})_{\R}$ and $N\in\mod{\Lambda}$, we write $\theta(N)$ for $\langle \theta, [N] \rangle$.

Henceforth, we fix a hereditary $\K$-algebra $H=\K Q$ and a tilting $H$-module $T$. The associated tilted $\K$-algebra of type $Q$ is then given by $\End_{H}(T)\op$, which we denote by $B$. We get a linear isomorphism between the Grothendieck groups of $H$ and $B$. In this section, we aim to use this, in the case that $B$ is concealed, to show that $B$ is \g-tame only if $H$ is (\g-)tame.

\begin{lemma}\cite[§4.1(7)]{Rin84}\label{lem:HtoB_fan}
    Let $H$ be a hereditary $\K$-algebra, let $T$ be a tilting $H$-module and let $B = \End_H(T)\op$ be the associated tilted $\K$-algebra. Then there are isomorphisms
    \begin{equation*}
        \begin{tikzcd}
        K_0(\proj H) \arrow[r,"{(-)_{B}}"] & K_0(\proj B)\\
        \theta \arrow[r,mapsto] \arrow[u,phantom, sloped,"\in"]& \theta_B\arrow[u,phantom, sloped,"\in"]
                \end{tikzcd}
        \end{equation*}
    and
    \begin{equation*}
        \begin{tikzcd}
        K_0(\mod H) \arrow[r,"(\widehat{-})"] & K_0(\mod B)\\
        d \arrow[r,mapsto] \arrow[u,phantom, sloped,"\in"]& \widehat d \arrow[u,phantom, sloped,"\in"]
                \end{tikzcd}
        \end{equation*}
    with the following properties:
    \begin{itemize}
        \item $\theta(d) = \theta_B(\widehat d$),%, where $-\cdot-$ is the natural pairing.
        \item If $M$ is in $\Gen T$, then $\widehat {[M]} = [\Hom(T, M)]$ and $\g^M_B$ is the \g-vector of $\Hom(T, M)$.
        \item If $N$ is in $T^{\perp}$, then $\widehat {[N]} = - [\Ext(T, N)]$.
    \end{itemize}
\end{lemma}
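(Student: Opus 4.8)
The plan is to realize both isomorphisms as the maps induced on Grothendieck groups by the standard derived equivalence attached to $T$, and then read off the three listed properties. Since $H$ is hereditary we have $\projdim{H}{T}\le 1$, so by Happel's theorem the total derived functor $\Phi\defeq\mathbf{R}\Hom_H(T,-)\colon\Dbmod{H}\to\Dbmod{B}$ is a triangle equivalence with quasi-inverse $T\otimes^{\mathbf{L}}_B-$. Both $H$ and $B$ have finite global dimension, so for $\Lambda\in\{H,B\}$ the stalk functors give identifications $K_0(\mod\Lambda)\cong K_0(\Dbmod\Lambda)$ and $K_0(\proj\Lambda)\cong K_0(\Kb{\proj\Lambda})$, where $\Kb{\proj\Lambda}\subseteq\Dbmod\Lambda$ is the category of perfect complexes (here all of $\Dbmod\Lambda$). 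I would \emph{define} $\widehat{(-)}$ to be the isomorphism $K_0(\mod H)\to K_0(\mod B)$ induced by $\Phi$ on $K_0(\Dbmod{-})$, and $(-)_B$ the isomorphism $K_0(\proj H)\to K_0(\proj B)$ induced by $\Phi$ on perfect complexes; both are isomorphisms because $\Phi$ is an equivalence.

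The module-class formula comes from computing cohomology. For an $H$-module $M$, heredity forces $\Phi M=\mathbf{R}\Hom_H(T,M)$ to have cohomology only in degrees $0$ and $1$, namely $\Hom_H(T,M)$ and $\Ext^1_H(T,M)$; hence in $K_0(\Dbmod B)=K_0(\mod B)$ we obtain $\widehat{[M]}=[\Hom_H(T,M)]-[\Ext^1_H(T,M)]$. (Well-definedness can also be checked without derived categories, by applying $\Hom_H(T,-)$ to a short exact sequence and using that the resulting six-term sequence is exact because $\Ext^2_H=0$.) Specializing via the torsion pair $(\mathcal{T}_T,\mathcal{F}_T)=(\Gen T,T^\perp)$ of \Cref{setup:concealed}: if $M\in\Gen T$ then $\Ext^1_H(T,M)=0$, giving $\widehat{[M]}=[\Hom_H(T,M)]$; if $N\in T^\perp$ then $\Hom_H(T,N)=0$, giving $\widehat{[N]}=-[\Ext^1_H(T,N)]$. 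This settles the opening clause of the second and third bullet points.

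For the pairing I would use the Euler pairing $K_0(\Kb{\proj\Lambda})\times K_0(\Dbmod\Lambda)\to\Z$, $\langle[P^\bullet],[X]\rangle=\sum_i(-1)^i\dim\Hom_{\Dbmod\Lambda}(P^\bullet,X[i])$, which restricts to $\langle[P],[M]\rangle=\dim\Hom_\Lambda(P,M)=\theta(d)$ on a projective module $P$ and a module $M$, and which every triangle equivalence preserves. Applying this to $\Phi$ yields $\theta(d)=\langle\theta,d\rangle_H=\langle\theta_B,\widehat d\,\rangle_B=\theta_B(\widehat d)$. Since this pairing is unimodular (as $\langle[P(i)],[S(j)]\rangle=\delta_{ij}$), the identity $\theta(d)=\theta_B(\widehat d)$ in fact characterizes $(-)_B$ uniquely as the transpose-inverse of $\widehat{(-)}$, a convenient alternative description.

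The main obstacle is the $\g$-vector clause, which needs genuine homological control rather than a formal $K_0$-computation. Fix $M\in\Gen T$ and write $W\defeq\Hom_H(T,M)$, so that $\Phi M\cong W$ concentrated in degree $0$. The key claim is $\projdim{B}{W}\le 1$. To prove it, take any $B$-module $Y$; the split torsion pair $(\mathcal{X}_T,\mathcal{Y}_T)$ makes the torsion sequence of $Y$ split, so $Y\cong X'\oplus W'$ with $X'\in\mathcal{X}_T$, $W'\in\mathcal{Y}_T$, while $\Phi^{-1}$ sends $\mathcal{X}_T$ into $\mathcal{F}_T[1]$ and $\mathcal{Y}_T$ into $\mathcal{T}_T$. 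Then
\[
\Ext^2_B(W,Y)\cong\Hom_{\Dbmod B}(\Phi M,Y[2])\cong\Hom_{\Dbmod H}\big(M,\Phi^{-1}(Y[2])\big),
\]
and $\Phi^{-1}(Y[2])$ lies in $\mathcal{F}_T[3]\oplus\mathcal{T}_T[2]$, so the right-hand side only involves $\Ext^{3}_H(M,-)\oplus\Ext^{2}_H(M,-)$, which vanishes by heredity. Hence $\Ext^2_B(W,-)=0$, so $\projdim{B}{W}\le 1$ and the minimal projective presentation $P_1^W\to P_0^W$ of $W$ is a resolution; thus $[W]=[P_0^W]-[P_1^W]=\g^W$ in $K_0(\proj B)$. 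On the $H$-side, heredity gives $\g^M=[M]$ under $K_0(\proj H)\cong K_0(\Kb{\proj H})$, whence $(\g^M)_B=\Phi_*[M]=[\Phi M]=[W]=\g^W$. Therefore $\g^M_B$ is the $\g$-vector of $\Hom_H(T,M)$, which completes the argument.
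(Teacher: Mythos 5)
Your proof is correct, and at its core it follows the same strategy as the paper: transport classes along the tilting functor and use finite global dimension to identify $K_0(\proj \Lambda)$ with $K_0(\mod \Lambda)$, so that $(-)_B$ is the map making the evident square with $\widehat{(-)}$ commute. The differences lie in how the individual properties are verified. Where the paper simply cites Ringel for the formula $[M]\mapsto[\Hom(T,M)]-[\Ext(T,M)]$, you rederive it from Happel's derived equivalence by computing the cohomology of $\mathbf{R}\Hom_H(T,M)$; where the paper checks the pairing identity $\theta(d)=\theta_B(\widehat d\,)$ on the basis of $K_0(\mod H)$ given by the indecomposable summands of $T$ and extends bilinearly, you obtain it from the invariance of the Euler form under a triangle equivalence; and where the paper asserts without further argument that $\Hom(T,M)$ has projective dimension at most $1$ for $M\in\Gen T$ (a standard fact about the torsion-free class $\mathcal{Y}_T$ over a tilted algebra), you actually prove it, via the vanishing of $\Ext^2_B$ computed through the equivalence together with the splitness of $(\mathcal{X}_T,\mathcal{Y}_T)$. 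Your version is more self-contained and uniform, at the cost of invoking the derived machinery; the paper's is shorter because it leans on the literature for exactly the steps you chose to prove.
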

    \begin{proof}
        By \cite[§4.1(7)]{Rin84} there is an isomorphism $K_0(\mod H) \to K_0(\mod B)$ given by $$[M] \mapsto [\Hom(T, M)] - [\Ext(T, M)].$$ We take this to be our $(\widehat -)$. Note that $\widehat{[M]} = [\Hom(T, M)]$ and $\widehat{[N]} = -[\Ext(T, N)]$ for $M \in \Gen T$ and $N \in T^\perp$. 
        Now, since $H$ and $B$ have finite global dimension, the inclusions $K_0(\proj H) \hookrightarrow K_0(\mod H)$ and $K_0(\proj B) \hookrightarrow K_0(\mod B)$ are isomorphisms. Hence we can define $(-)_B$ as the unique map making the square below commute.
        \begin{center}
        \begin{tikzcd}
            K_0(\proj H) \ar[r, "(-)_B"] \ar[d, "\cong"] & K_0(\proj B) \ar[d, "\cong"]\\
            K_0(\mod H) \ar[r, "(\widehat{-})"] & K_0(\mod B)
        \end{tikzcd}
        \end{center}
        If $M \in \Gen T$, then both $M$ and $\Hom(T, M)$ have projective dimension at most 1. Then the vertical isomorphisms identify $[M]$ and $[\Hom(T, M)]$ with their respective \g-vectors, hence $(-)_B$ maps the \g-vector of $M$ to the \g-vector of $\Hom(T, M)$. 
        Lastly, note that when $T_1$ and $T_2$ are in $\add T$, we have 
        \begin{align*}
            \g^{T_1}\left(T_2\right) = \HomDim\left(T_1, T_2\right) = \HomDim\left(\Hom\left(T, T_1\right),\Hom\left(T, T_2\right)\right) = \g^{T_1}_B\left(\widehat{T_2}\right),
        \end{align*}
        where 
        \begin{align}
            \HomDim(T_1, T_2) &\coloneqq \dim_{\K}\Hom_{H}(T_1, T_2), \label{eq:defHomDim} \\
            \HomDim\left(\Hom\left(T, T_1\right),\Hom\left(T, T_2\right)\right) &\coloneqq \dim_{\K}\Hom_{B}\left(\Hom\left(T, T_1\right),\Hom\left(T, T_2\right)\right). \nonumber
        \end{align}
        Since the summands of $T$ form a basis for $K_0(\mod H)$, the same holds for all $\theta$ and $d$ in $K_0(\proj H)$ and $K_0(\mod H)$, respectively. 
    \end{proof}

Thinking of the \g-vector fans of $H$ and $B$ inside $K_0(\proj H)$ and $K_0(\proj B)$, we would like to relate them using the isomorphism $(-)_B$. To do this we must first understand the \g-vector fan of $H$ a little better.
Because $H$ is hereditary, the isomorphism $K_0(\proj H) \to K_0(\mod H)$ is particularly nice. It associates each $H$-module with its \g-vector. This correspondence allows us to use classical tools that study the dimension vectors of quivers.

Let us now recall the Tits forms of hereditary $\K$-algebras. 
Suppose that $Q$ has $n$ vertices.
The \emph{Tits form} of $Q$ is the $\Z$-linear quadratic form defined by 
\begin{equation*}
    \begin{tikzcd}
            \Z^n \arrow[r,"q_H"] & \Z \\
         x \arrow[r,mapsto]\arrow[u,phantom, sloped,"\in"] & \sum_{i\in Q_0}x_i^2 - \sum_{i\in Q_0}d_{i,j}x_i x_j ,\arrow[u,phantom, sloped,"\in"]
    \end{tikzcd}
\end{equation*}
where $d_{i,j}$ is the number of arrows in $Q$ from the vertex $i$ to the vertex $j$. If $x$ is the dimension vector of an $H$-module $M$, one can express $q_H(x)$ by
\begin{equation}\label{eq:tits_dimvec}q_H(x)= \HomDim(M,M) - \ExtDim(M,M),\end{equation} 
where $\HomDim\coloneqq \dim_{\K} \Hom_{H}$ (see also \eqref{eq:defHomDim} above) and $\ExtDim \coloneqq \dim_{\K} \Ext^1_{H}$. 
%We write $\HomDim$ for $\dim_{\K} \Hom_{H}(M,M)$ and $\ExtDim$ for $\dim_{\K} \Ext^1_{H}$, so that the expression above simplifies to \begin{equation*}q_H(x)= \dim_{\K} \Hom_{H}(M,M) - \dim_{\K} \Ext^1_{H}(M,M)\end{equation*} 
We often think of $q_H$ as a function from $K_0(\mod H)$ to $\Z$. With this formulation, we have that $q_H([M]) = \g^M(M)$, and hence $q_H([M])$ is positive whenever $M$ is rigid. Using the isomorphism between $K_0(\proj H)$ and $K_0(\mod H)$, we may think of $q_H$ as a $\Z$-valued function on $K_0(\proj H)$. 

Suppose now that $H$ is hyperbolic. 
The hyperbolicity of $H$ famously enables the following characterization of the dimensions vectors in $\mod({H})$ in terms of the Tits form: for any integer vector with $q_H(x) \leq 1$, we have that $x$ or $-x$ is the dimension vector of an indecomposable $H$-module \cite[p.~78]{Kac83}. In particular, if $x$ is a positive vector with integer coefficients such that $q_H(x) \leq 1$, then $x$ is the dimension vector of an indecomposable $H$-module. Since $q_H(x)=1$ for all dimension vectors $x$ of an indecomposable $H$-module which is either postprojective or preinjective, we have that $x$ is the dimension vector of a regular $H$-module whenever $q_H(x)\leq 0$. 

The Tits form $q_H$ extends to a quadratic $\R$-linear form
\begin{equation*}
\begin{tikzcd}
    q_{H,\R}\colon \R^n \arrow[r] & \R.
\end{tikzcd}
\end{equation*}
We define the \textit{negative cone} of the Tits form to be the set of positive vectors in the negative locus of the Tits form, namely the set
\begin{equation*}
    C^{<0}_H \defeq \{x\in \R^n \sth q_{H,\R}(x)<0, x \text{ positive} \} \subseteq \R^n.
\end{equation*}
We have now established that when $H$ is hyperbolic, any rational vector in $C^{<0}_H$ is a scalar multiple of an indecomposable regular $H$-module. Using the isomorphism $$K_0(\proj H)_\R \cong K_0(\mod H)_\R \cong \R^n,$$ where $[P]$ is sent to its dimension vector $\underline{\dim}(P)$, we will think of $C^{<0}_H$ as a subset of $K_0(\proj H)_\R$, and relate it to the \g-vector fan.

\begin{remark}
    It is well known that a hereditary $\K$-algebra $H$ is wild if and only if $C^{<0}_H$ is nonempty. See for example \cite[p.~7]{Rin84}, \cite[Theorem~1.11]{DDPW08} or \cite[Lemma~1.2]{Kac80}.
\end{remark}

\begin{lemma}\label{lem:negative_cone_and_postprojectives}
    Let $H$ be a hyperbolic hereditary $\K$-algebra, let $N$ be an indecomposable postprojective $H$-module and let $\theta \in C^{<0}_H$ be a vector in the negative cone. Then $\theta(N) < 0$.
    \begin{proof}
        Let us first assume that $\theta$ is a rational vector. Then by the preceding discussion, we deduce that $\theta$ is a multiple of the $\g$-vector of a regular $H$-module $M$, and so we may assume that $\theta = \g^M$. Then $\theta(N) = \HomDim(M, N) - \ExtDim(N, M) = -\ExtDim(N, M) \leq 0$. Now, since $N$ is postprojective, it must be rigid, whence $\g^N(N) > 0$. Since the negative cone is an open subset of $\R^n$, we may pick a rational number $\varepsilon > 0$ such that $\theta + \varepsilon \g^N$ is still in $C^{<0}_H$. By the same argument as before, we have $(\theta + \varepsilon \g^N)(N) \leq 0$, but this implies that
        \begin{align*}
            \theta(N) \leq - \varepsilon \g^N(N) < 0.
        \end{align*}
        We conclude that $\theta(N) < 0$.
    \end{proof}
\end{lemma}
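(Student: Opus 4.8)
The plan is to settle first the case of a rational vector $\theta$, where I can identify $\theta$ with the $\g$-vector of an honest module and evaluate $\theta(N)$ through the Euler form, and then to bootstrap the resulting non-strict inequality to a strict one and remove the rationality assumption. So suppose first that $\theta\in C^{<0}_H$ is rational. Clearing denominators, I may rescale $\theta$ to a positive \emph{integral} vector; since $q_{H,\R}(\theta)<0$ is preserved under positive rescaling and the sign of $\theta(N)$ is likewise unaffected, it suffices to treat this integral vector. By the characterization recalled just before the lemma (Kac's theorem for hyperbolic $H$), a positive integral vector with $q_H\leq 0$ is the dimension vector of an indecomposable \emph{regular} $H$-module $M$, so under $K_0(\proj H)\cong K_0(\mod H)$ I may write $\theta=\g^M$. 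Reading the pairing off a projective presentation of $M$ then gives the Euler-form identity $\theta(N)=\HomDim(M,N)-\ExtDim(M,N)$.

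Next I would invoke the structure of the module category of a connected wild hereditary algebra: there are no nonzero homomorphisms from a regular module to a postprojective one, so $\HomDim(M,N)=0$ and hence $\theta(N)=-\ExtDim(M,N)\leq 0$. To promote this to a strict inequality I would use that $N$, being postprojective, is rigid, so that $\g^N(N)=q_H([N])>0$. Since $C^{<0}_H$ is open, for a sufficiently small rational $\varepsilon>0$ the rational vector $\theta+\varepsilon\g^N$ still lies in $C^{<0}_H$; applying the previous step to it yields $(\theta+\varepsilon\g^N)(N)\leq 0$, and by linearity of the pairing this gives $\theta(N)\leq-\varepsilon\,\g^N(N)<0$.

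It remains to drop the rationality assumption. The map $x\mapsto x(N)$ is linear, hence continuous; by the previous paragraph it is negative on the rational points of $C^{<0}_H$, which are dense since $C^{<0}_H$ is open, so $x(N)\leq 0$ throughout $C^{<0}_H$. Given an arbitrary $\theta\in C^{<0}_H$, openness again lets me choose $\delta>0$ with $\theta+\delta\g^N\in C^{<0}_H$, whence $\theta(N)+\delta\,\g^N(N)=(\theta+\delta\g^N)(N)\leq 0$ forces $\theta(N)<0$.

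I expect the genuinely delicate point to be the passage from the non-strict bound $\theta(N)\leq 0$ to the strict bound $\theta(N)<0$. The two structural inputs---identifying $\theta$ with a regular module via Kac's theorem, and the vanishing $\HomDim(M,N)=0$---are supplied by the preceding discussion and by standard facts about the components of the Auslander--Reiten quiver; but only the combination of the rigidity of $N$ with the \emph{openness} of the negative cone, exploited in a perturbation by $\g^N$, produces the strict inequality and at the same time disposes of the irrational vectors.
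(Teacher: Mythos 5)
Your proof is correct and follows essentially the same route as the paper's: identify a rational $\theta$ with $\g^M$ for a regular module $M$ via Kac's theorem, use the vanishing of homomorphisms from regular to postprojective modules to get $\theta(N)\leq 0$, and then exploit the openness of $C^{<0}_H$ together with the rigidity of $N$ to perturb by $\varepsilon\g^N$ and obtain the strict inequality. The only difference is that you spell out the density/continuity argument needed to pass from rational to arbitrary vectors in $C^{<0}_H$, a step the paper leaves implicit; this is a welcome clarification rather than a divergence.
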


The last tool we will need is the concept of a wall. This was first utilized in the context of $\tau$-tilting by Brüstle--Smith--Treffinger \cite{BT19} and Asai \cite{Asa21}.

\begin{definition}\cite{BT19}
    For a $\K$-algebra $\Lambda$ and a nonzero $\Lambda$-module $M$ we define
    \begin{align*}
        \Theta_M \coloneqq \{\theta \in K_0(\proj \Lambda)_\R \mid \theta(M)=0,\theta(X)\geq 0 \text{ for } X \in \Gen M \}.
    \end{align*}
    We also write $\mathrm{Walls}$ for the union of all walls in $K_0(\proj \Lambda)_\R$, and $\overline{\mathrm{Walls}}$ for its closure in $K_0(\proj \Lambda)_\R$.
\end{definition}

By \cite[Theorem~3.17]{Asa21}, the complement of $\overline{\mathrm{Walls}}$ is exactly the union of the interiors of the maximal cones in the \g-vector fan, and $\mathrm{Walls}$ contains the rational points outside the \g-vector fan. This means that $\overline{\mathrm{Walls}}^{\mathrm o}$ is exactly the exterior of the \g-vector fan. When $\Lambda=H$ is hereditary, $q_H$ is non-negative on any point in the \g-vector fan, hence $C^{<0}_H$ does not intersect it. Therefore $C^{<0}_H$ is contained in $\overline{\mathrm{Walls}}$, and $C^{<0}_H \cap K_0(\proj H)_\Q$ is contained in $\mathrm{Walls}$.

\begin{theorem}\label{thm:notgtame}
    Let $Q$ be a wild acyclic quiver, let $H = \K Q$ be its path $\K$-algebra, and let $B$ be a concealed $\K$-algebra of type $Q$ with $B = \End_{H}(T)\op$ for a postprojective tilting $H$-module $T$. Then $B$ is not \g-tame.
\end{theorem}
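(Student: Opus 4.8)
The plan is to reduce to the hyperbolic case by \Cref{thm:red_to_hyperbolic}, and then to exhibit an open region of $\mathrm{K}_0(\proj B)_{\R}$ lying in the exterior of $\gvectorfan{B}$, namely the image of the negative cone $C^{<0}_H$. First I would argue that it is enough to treat the case where $Q$ is hyperbolic: if $B$ were $\g$-tame, then by \Cref{thm:red_to_hyperbolic} there is an idempotent $e\in B$ with $B/(e)$ concealed of hyperbolic type, and by \Cref{lem:convidem}\eqref{lem.PY23.3.11} the factor $B/(e)$ is again $\g$-tame. So I may assume $H=\K Q$ is itself hyperbolic. Since a hyperbolic quiver is wild, $C^{<0}_H\subseteq \mathrm{K}_0(\proj H)_{\R}$ is a nonempty open cone.

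Next I transport $C^{<0}_H$ through the linear isomorphism $(-)_B\colon \mathrm{K}_0(\proj H)_{\R}\to \mathrm{K}_0(\proj B)_{\R}$ of \Cref{lem:HtoB_fan}, obtaining a nonempty open cone $(C^{<0}_H)_B$. Because the interiors of the maximal cones of $\gvectorfan{B}$ form exactly the complement of $\overline{\mathrm{Walls}}$ \cite[Theorem~3.17]{Asa21}, it suffices to prove the inclusion $(C^{<0}_H)_B\subseteq\overline{\mathrm{Walls}}$: an open set contained in the closed set $\overline{\mathrm{Walls}}$ lies in its interior, which is the exterior of the fan, whence $\gvectorfan{B}$ is not dense and $B$ is not $\g$-tame. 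As rational points are dense in the open cone and $\overline{\mathrm{Walls}}$ is closed, I only need $(\eta)_B\in\mathrm{Walls}$ for every rational $\eta\in C^{<0}_H$.

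This is the heart of the argument. Fix such an $\eta$ and write $\theta=(\eta)_B$. Recalling that $C^{<0}_H\cap \mathrm{K}_0(\proj H)_{\Q}\subseteq\mathrm{Walls}$, choose an indecomposable brick $M'$ over $H$ with $\eta\in\Theta_{M'}$, so that $\eta(M')=0$ and $\eta(X)\geq 0$ for all $X\in\Gen M'$. By \Cref{lem:negative_cone_and_postprojectives}, $\eta$ is strictly negative on every postprojective module, so $\eta(M')=0$ forces $M'$ to be non-postprojective; being indecomposable it is regular or preinjective, and hence lies in $\mathcal{T}_T=\Gen T$ (for a postprojective tilting module the torsion-free class $\mathcal{F}_T=T^{\perp}$ consists solely of postprojective $H$-modules). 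Put $\mathcal{M}\coloneqq\Hom_H(T,M')\in\mod B$, so $\widehat{[M']}=[\mathcal{M}]$. I claim $\theta\in\Theta_{\mathcal{M}}$. The identity $\theta(d)=\theta_B(\widehat d)$ gives $\theta(\mathcal{M})=\eta(M')=0$. For the positivity condition, take $Y\in\Gen\mathcal{M}$ and split it along the split torsion pair $(\mathcal{X}_T,\mathcal{Y}_T)$ as $0\to tY\to Y\to Y/tY\to 0$, where $tY\in\mathcal{X}_T$ is the torsion submodule. Since $\Hom_H(T,-)$ is exact on $\Gen T$, the torsion-free quotient $Y/tY\in\mathcal{Y}_T$ equals $\Hom_H(T,M'')$ for a quotient $M''$ of $M'$ in $\Gen T$, so $\theta(Y/tY)=\eta(M'')\geq 0$. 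The torsion part $tY\in\mathcal{X}_T$ equals $\Ext^1_H(T,W)$ for some $W\in\mathcal{F}_T$, hence $[tY]=-\widehat{[W]}$ and $\theta(tY)=-\eta(W)$; as $W$ is postprojective, \Cref{lem:negative_cone_and_postprojectives} gives $\eta(W)\leq 0$, so $\theta(tY)\geq 0$. Adding these yields $\theta(Y)\geq 0$, establishing $\theta\in\Theta_{\mathcal{M}}\subseteq\mathrm{Walls}$.

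The hard part is exactly this last positivity estimate. A quotient of $\mathcal{M}$ in $\mod B$ need not remain in the torsion-free class $\mathcal{Y}_T$, so the wall inequality for $H$ cannot be transported naively; the extra torsion contribution $tY$ is the obstruction. What rescues the argument is that $tY$ comes from $\mathcal{F}_T$, which for a \emph{postprojective} tilting module consists only of postprojective $H$-modules, precisely the modules on which \Cref{lem:negative_cone_and_postprojectives} forces $\eta$ to be non-positive, so the dangerous term becomes favourable. Once $(C^{<0}_H)_B\subseteq\overline{\mathrm{Walls}}$ is secured, the density criterion for $\gvectorfan{B}$ finishes the proof.
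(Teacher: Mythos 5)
Your proposal is correct and follows essentially the same route as the paper's proof: reduce to the hyperbolic case via \Cref{thm:red_to_hyperbolic} and the idempotent-stability of $\g$-tameness, then show that for each rational $\eta$ in the negative cone $C^{<0}_H$ the image $(\eta)_B$ lies on a wall $\Theta_{\Hom_H(T,M')}$, using \Cref{lem:negative_cone_and_postprojectives} both to force $M'\in\Gen T$ and to control the torsion part coming from $\mathcal{F}_T$, and conclude by density of rationals and Asai's description of the complement of $\overline{\mathrm{Walls}}$. Your explicit canonical-sequence decomposition of $Y\in\Gen\mathcal{M}$ is just a slightly more detailed phrasing of the paper's direct-sum decomposition along the split torsion pair; there is no substantive difference.
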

\begin{proof}
 \Cref{thm:red_to_hyperbolic} provides an idempotent $e\in B$ such that $B/(e)$ is a concealed $\K$-algebra of hyperbolic type. By \Cref{lem:convidem}\eqref{lem.PY23.3.11}, the $\K$-algebra $B$ will not be $\mathbf{g}$-tame if $B/(e)$ is not $\mathbf{g}$-tame. We may therefore make the additional assumption that $H$ is hyperbolic, since it suffices to consider this special case to prove the general statement.

Suppose therefore that $H$ is hyperbolic.
Now consider a rational vector $\theta \in C^{<0}_H$, and let $M$ be an indecomposable $H$-module such that $\theta \in \Theta_M$. By \Cref{lem:negative_cone_and_postprojectives}, we have that $\theta(N) < 0$ for every postprojective $H$-module $N$. 
Therefore $M$ cannot be postprojective, so we must have that $M$ is in $\Gen T$. We wish to show that $\theta_B$ is contained in $\Theta_{\Hom(T, M)}$.

Since $M \in \Gen T$ we have $\theta_B(\Hom(T, M)) = \theta(M) = 0$. Combining the assertions in \Cref{setup:concealed}, one shows that any $B$-module in $\Gen \Hom_{\Lambda}(T, M)$ is of the form $\Hom_{\Lambda}(T, M') \oplus \Ext_{\Lambda}^1(T, N)$ with $M'$ in $\Gen T$ and $N$ in $T^\perp$. There is a  surjective $B$-homomorphism $\Hom_{\Lambda}(T, M)^m \twoheadrightarrow \Hom_{\Lambda}(T, M')$, which is induced by a $\Lambda$-homomorphism $M^m \xrightarrow{p} M'$. Consider the cokernel of $p$ and let $\pi \colon M' \to \mathrm{cok}(p)$ be the canonical map. Since the composition
\begin{equation}\label{eq:lesH}
\begin{tikzcd}
    \Hom_{\Lambda}(T, M)^m \ar[r, two heads, "p\circ-"] & \Hom_{\Lambda}(T, M') \ar[r, "\pi\circ-"]& \Hom_{\Lambda}(T,\mathrm{cok}(p))
\end{tikzcd}
\end{equation}
vanishes, we have that $\Hom(T, \pi) = 0$. But $\Hom_{\Lambda}(T,-)\colon \Gen(T) \to \cY_T$ is an equivalence, hence $\pi$ is $0$. This shows that $M^m \xrightarrow{p} M'$ is an epimorphism, whence $M'$ is in $\Gen M$.
By \cref{lem:HtoB_fan}, 
\begin{align*}
    \theta_B(\Hom(T, M') \oplus \Ext(T, N)) =\theta_B(\widehat{M'})+\theta_B(-\widehat{N}) =\theta(M') - \theta(N) \geq 0.
\end{align*} 
Thus, we have that $\theta_B$ is in $\Theta_{\Hom(T, M)}$.

This means that the image of $C^{<0}_H \cap K_0(\proj H)_\Q$ under $(-)_B$ does not intersect the interior of any maximal cone. Since the subset of rationals vectors is dense in $C^{<0}_H$, it does not intersect any maximal cone either. Hence, the image of $C^{<0}_H$ is an open set in the exterior of the \g-vector fan of $B$. In conclusion, we have that $B$ is not \g-tame.
\end{proof}
\label{sec:3.2-wild_not_gtame}

\subsection{\texorpdfstring{$\mathbf{g}$}{g}-tameness of incidence algebras}
We move on to classify $\mathbf{g}$-tame incidence $\K$-algebras of finite simply connected posets. We will use Leszczyński's characterization of tameness for incidence $\K$-algebras, which we restate in \Cref{thm:Les03} below. This result makes reference to the notion of universal Galois covering of a $\K$-algebra, in the sense of Martinez-Villa--de la Peña \cite{MP83}. Since the universal Galois covering of a simply connected poset is itself \cite[Theorem 4.2]{MP83}, one can prove \Cref{cor:concealed_gvecfan} below without relying on a detailed account of covering theory.  

\begin{theorem}[{\cite[Theorem~1.4]{Les03}}]\label{thm:Les03}
    Let $P$ be a finite poset. Then the incidence $\K$-algebra $\incidencealgebra{\K}{P}$ is tame if and only if its universal Galois covering does not contain a convex subalgebra $L$ which is a concealed $\K$-algebra of one of the types appearing in \cref{tab:Les frames}.
\begin{table}[h!]
    \centering
    \begin{tabularx}{.75\linewidth}{c X|c X}
         $\qAtildetilde_{p,q}$& \hspace*{\fill}\QuiverApqDraw\hspace*{\fill} & $\qDtildetilde_n$ & \hspace*{\fill}\QuiverDtildetildeDraw\hspace*{\fill} \\
         &\footnotesize  where the number of vertices is $p+q+1$, and among the arrows in the unoriented cycle, exactly $p$ arrows have clockwise orientation and exactly $q$ have counter-clockwise orientation. && \footnotesize where $4\leq n\leq 8$, and the number of vertices is $n+2$. \\[3pt]\hline
         &&&\\[-6pt]
         $\qEtildetilde_6$ & \hspace*{\fill}\QuiverEtildetildesixDraw\hspace*{\fill} & $\qEtildetilde_7$ & \hspace*{\fill}\QuiverEtildetildesevenDraw\hspace*{\fill} 
         \\[3pt] \hline
         &&& \\[-6pt]
         $\qEtildetilde_8$ & \hspace*{\fill}\QuiverEtildetildeeightDraw\hspace*{\fill} & $\qT_5$& \hspace*{\fill}\QuiverTfiveDraw\hspace*{\fill}
    \end{tabularx}
    \caption{Quivers appearing in Leszczyński's classification of wild posets. Undirected edges indicate that the arrows may go in either direction.}
    \label{tab:Les frames}
\end{table}
\end{theorem}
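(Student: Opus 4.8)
The plan is to reduce the characterization first, via covering theory, to a question about the simply connected cover, and then to the tame--wild dichotomy for simply connected algebras as governed by the Tits form; the genuine content then lies in the combinatorial classification of the minimal wild frames. Write $A=\incidencealgebra{\K}{P}$ and let $\widetilde{A}$ be its universal Galois covering. Since $P$ is a poset, $\widetilde{A}$ is a locally bounded, simply connected $\K$-category on which the fundamental group acts freely \cite{MP83}, and the first reduction is to note that $A$ is tame if and only if $\widetilde{A}$ is tame; this is the covering correspondence of Dowbor--Skowroński, and it is what makes the criterion a condition on $\widetilde{A}$ rather than on $A$ directly.

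Next I would invoke the quadratic-form criterion for strongly simply connected algebras: such an algebra is tame exactly when its Tits form is weakly non-negative. With this in hand, both implications of the theorem become statements about frames. For necessity, each type listed in \Cref{tab:Les frames} underlies a wild acyclic quiver --- the doubly-extended Dynkin diagrams $\qAtildetilde_{p,q}$, $\qDtildetilde_n$, $\qEtildetilde_6$, $\qEtildetilde_7$, $\qEtildetilde_8$ and the star $\qT_5$ --- so a convex subcategory of this form is wild concealed, forcing the Tits form of $\widetilde{A}$ to fail weak non-negativity and hence $\widetilde{A}$, and with it $A$, to be wild. For sufficiency one argues contrapositively: if $\widetilde{A}$ is wild then its Tits form is not weakly non-negative, and the problem is to pin down a minimal convex subcategory on which this failure is already visible.

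The crux --- and the main obstacle --- is the \emph{completeness} of the table, i.e. the assertion that every such minimal (``hypercritical'') convex incidence subcategory is concealed of one of the six listed types. This is the one-level-up analogue of the classification, recorded in \Cref{tab:Loupias frames}, of the critical frames detecting representation-infiniteness: there the critical frames are tame concealed, here the hypercritical frames are wild concealed. Carrying it out means enumerating the minimal wild one-point extensions and coextensions of weakly non-negative incidence frames and verifying, case by case, that each restricts to a concealed algebra of precisely one tabulated type. Finally one should confirm that every frame in \Cref{tab:Les frames} is genuinely concealed --- obtained by tilting from the postprojective component of the corresponding wild hereditary algebra --- which both validates the phrasing of the criterion and embeds it in the concealed-algebra framework used throughout this paper.
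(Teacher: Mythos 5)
This statement is not proved in the paper at all: it is quoted verbatim from Leszczy\'nski \cite[Theorem~1.4]{Les03} and used as a black box, so there is no in-paper argument to compare yours against. Judged on its own terms, your outline does track the strategy used in the literature for results of this kind --- pass to the universal Galois covering (where tameness of $\incidencealgebra{\K}{P}$ is equivalent to tameness of the cover, since the fundamental group of a poset acts freely), invoke the weak non-negativity criterion for the Tits form of a (strongly) simply connected algebra, and then classify the minimal wild (``hypercritical'') convex subcategories. Your observation that this is the one-level-up analogue of the \Cref{tab:Loupias frames} situation (critical frames are tame concealed, hypercritical frames are wild concealed) is exactly the right way to situate the theorem.

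The gap is that the entire mathematical content of the theorem lives in the step you defer: the completeness of \Cref{tab:Les frames}, i.e.\ the verification that \emph{every} minimal wild convex subcategory of the cover of an incidence algebra is concealed of one of the six listed types, and conversely that each listed frame is genuinely wild concealed. You name this as ``the crux'' and describe what carrying it out would involve (enumerating minimal wild one-point extensions and coextensions, case by case), but you do not execute any of it, nor do you cite the specific classification results (e.g.\ Unger's list of minimal wild concealed algebras, or the hypercritical classifications underlying Leszczy\'nski's argument) that would discharge it. There is also a point you pass over quickly: the weak non-negativity criterion for tameness you invoke is a theorem about \emph{strongly} simply connected algebras, so one must check that the convex subcategories of the cover of an incidence algebra satisfy that hypothesis (or route around it as Leszczy\'nski does). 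As written, the proposal is a correct road map but not a proof; for the purposes of this paper the honest move is to cite \cite{Les03} rather than to reprove it.
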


Since the universal Galois covering of a finite simply connected poset is just the poset itself, we deduce that a simply connected poset has a $\mathbf{g}$-tame incidence $\K$-algebra if and only if it does not contain a convex subposet whose incidence $\K$-algebra is a concealed $\K$-algebra of one of the types in Table \ref{tab:Les frames}.

\begin{corollary}\label{cor:concealed_gvecfan}
    Let $P$ be a finite simply connected poset. Then the incidence $\K$-algebra $\incidencealgebra{k}{P}$ is $\mathbf{g}$-tame if and only if it is tame.
\end{corollary}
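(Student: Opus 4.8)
The plan is to prove the two implications separately, with all the substantive content lying in the direction from $\mathbf{g}$-tameness to tameness. For the easy direction, if $\incidencealgebra{\K}{P}$ is tame, then it is $\mathbf{g}$-tame by the result of Plamondon--Yurikusa \cite{PY23} recalled above; this uses nothing specific to incidence algebras.

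For the converse I would argue by contraposition, showing that if $\incidencealgebra{\K}{P}$ is \emph{not} tame then it is not $\mathbf{g}$-tame. The first step is to exploit simple-connectedness: since the universal Galois covering of a finite simply connected poset is the poset itself \cite[Theorem 4.2]{MP83}, Leszczyński's criterion \Cref{thm:Les03} applies directly to $P$ with no covering-theoretic detour. Thus, as $\incidencealgebra{\K}{P}$ is wild, the poset $P$ must contain a convex subposet $L$ whose incidence algebra $\incidencealgebra{\K}{L}$ is a concealed $\K$-algebra of one of the types listed in \Cref{tab:Les frames}.

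The key observation is that every type appearing in that table---namely $\qAtildetilde_{p,q}$, $\qDtildetilde_n$, $\qEtildetilde_6$, $\qEtildetilde_7$, $\qEtildetilde_8$ and $\qT_5$---is a wild quiver, these being precisely the doubly-extended Dynkin diagrams. Consequently $\incidencealgebra{\K}{L}$ is a \emph{wild concealed} $\K$-algebra, and \Cref{thm:notgtame} tells us that it is not $\mathbf{g}$-tame. To conclude, I would transfer this failure of $\mathbf{g}$-tameness from $L$ up to $P$: because $L$ is convex in $P$, \Cref{lem:convidem}\eqref{lem:convidem2} shows that $\mathbf{g}$-tameness of $\incidencealgebra{\K}{P}$ would force $\mathbf{g}$-tameness of $\incidencealgebra{\K}{L}$. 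Contrapositively, the non-$\mathbf{g}$-tameness of $\incidencealgebra{\K}{L}$ propagates to $\incidencealgebra{\K}{P}$, completing the argument.

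I expect essentially no serious obstacle at this stage, since the hard analytic work has already been carried out in \Cref{thm:red_to_hyperbolic} and \Cref{thm:notgtame}. The only points requiring a moment's care are verifying that the types in Leszczyński's list are genuinely wild---so that the associated concealed algebras fall under the scope of \Cref{thm:notgtame}---and confirming that the simple-connectedness hypothesis is exactly what allows \Cref{thm:Les03} to be read off directly from $P$ rather than from a nontrivial covering.
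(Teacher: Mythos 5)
Your proposal is correct and follows essentially the same route as the paper: the easy direction via Plamondon--Yurikusa, and the converse by combining simple-connectedness (so the universal Galois covering is $P$ itself), Leszczyński's criterion to extract a convex subposet whose incidence algebra is wild concealed, \Cref{thm:notgtame}, and \Cref{lem:convidem}\eqref{lem:convidem2} to transfer non-$\mathbf{g}$-tameness up to $P$. No gaps.
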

\begin{proof}
    Plamondon and Yurikusa have shown that any tame finite-dimensional $\K$-algebra is \g-tame \cite{PY23}. We show that the converse holds for incidence $\K$-algebras of finite simply connected posets. 
    
    Suppose that the incidence $\K$-algebra $\incidencealgebra{k}{P}$ is wild.
    Since the universal Galois covering of $P$ is 
    just $P$, it follows from \Cref{thm:Les03} that $P$ must contain a concealed $\K$-algebra of wild type as a convex subalgebra.
    By \Cref{thm:notgtame}, this concealed $\K$-algebra cannot be $\mathbf{g}$-tame. Applying \Cref{lem:convidem}\eqref{lem:convidem2}, we conclude that $\incidencealgebra{k}{P}$ cannot be $\mathbf{g}$-tame.
\end{proof}

In the case that $P$ is multiply connected, we might not be able to reduce $P$ to a concealed $\K$-algebra of wild type, as achieved above. However we still formulate the following conjecture:
\begin{conjecture}\label{conj:g_tame_poset}
    Let $P$ be an arbitrary finite poset such that the incidence $\K$-algebra $\incidencealgebra{k}{P}$ is $\mathbf{g}$-tame. Then $\incidencealgebra{k}{P}$ is tame.
\end{conjecture}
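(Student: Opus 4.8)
The plan is to prove the contrapositive: if $A \defeq \incidencealgebra{\K}{P}$ is wild, then it is not \g-tame. This mirrors the argument for \Cref{cor:concealed_gvecfan}, which rested on three ingredients: (i) Leszczyński's criterion \Cref{thm:Les03}, equating wildness with the presence of a wild concealed convex subalgebra in the universal Galois covering; (ii) \Cref{thm:notgtame}, that wild concealed algebras are not \g-tame; and (iii) the inheritance of \g-tameness by factor algebras and convex subposets, \Cref{lem:convidem}. The simply connected case went through because the universal Galois covering of a simply connected poset is the poset itself, so the wild concealed subalgebra supplied by \Cref{thm:Les03} is literally a convex subposet of $P$, and ingredient (iii) applies verbatim.

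First I would pinpoint what breaks when $P$ is multiply connected. Here the universal Galois covering $\widetilde{A}$ of $A$ is strictly larger than $A$: it is a locally bounded (typically infinite-dimensional) $\K$-category carrying a free action of the nontrivial fundamental group $\Pi_1(Q_P, I_P)$, with $A$ recovered as the associated orbit category. Leszczyński's theorem still detects a \emph{finite-dimensional} wild concealed convex subcategory $L$ inside $\widetilde{A}$, and \Cref{thm:notgtame} shows $L$ is not \g-tame. What is missing is a \emph{descent} statement: that $A$ inherits the failure of \g-tameness that is visible upstairs in $L \subseteq \widetilde{A}$. The crucial point is that, precisely in the multiply connected case, the projection $\widetilde{A}\to A$ wraps $L$ around nontrivially, so $L$ need not descend to a convex subposet of $P$, and \Cref{lem:convidem} cannot be invoked directly.

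The key step I would therefore develop is covering theory for \g-tameness. The natural vehicle is the push-down functor $F_\lambda\colon \mod \widetilde{A} \to \mod A$, which is exact, sends indecomposables to indecomposables (the action being free), and is compatible with the numerical pairing $\langle -, -\rangle$ underlying $\theta(N)$. The obstruction to \g-tameness produced in the proof of \Cref{thm:notgtame} is an open region of $K_0(\proj L)_\R$ lying in the exterior of $\gvectorfan{L}$, namely the image of the negative cone $C^{<0}_H$, every rational point of which lands on a wall $\Theta_M$ rather than in the interior of a maximal cone. I would try to transport this exterior region through the induced maps on real Grothendieck groups, showing that a covector avoiding every maximal cone of $L$ upstairs descends to a covector avoiding every maximal cone of $A$ downstairs, thereby exhibiting an open subset of $K_0(\proj A)_\R$ outside $\gvectorfan{A}$.

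The hard part will be exactly this descent. Unlike representation-finiteness, tameness, or $\tau$-tilting finiteness, there is at present no established correspondence between the \g-vector fan (equivalently, the wall-and-chamber structure of $K_0(\proj{-})_\R$) of a locally bounded category and that of its orbit category; moreover $\widetilde{A}$ may be infinite-dimensional, so its fan must itself be interpreted with care, e.g.\ through finite convex subcategories such as $L$. One must further control whether the upstairs obstruction region, which lives in a finite-dimensional subspace and is not $\Pi$-invariant, can be organized so that its projection genuinely evades the maximal cones of $A$ rather than being swallowed by one of them. Establishing that no maximal cone downstairs fills the image of the upstairs exterior region is the crux, and is what currently blocks a full proof. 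In lieu of it, I would fall back on \Cref{prop:conj_lim}, which bounds the multiply connected posets that could possibly violate the statement, thereby reducing a potential counterexample search to a short, explicit list and lending the conjecture its plausibility.
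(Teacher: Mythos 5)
The statement you were asked to prove is labelled a \emph{conjecture} in the paper, and the authors do not prove it: they only establish the simply connected case (\Cref{cor:concealed_gvecfan}) and offer \Cref{prop:conj_lim} as evidence that counterexamples, if any, are tightly constrained. Your proposal correctly recognizes this. You accurately diagnose why the simply connected argument breaks down --- in the multiply connected case the wild concealed convex subcategory supplied by \Cref{thm:Les03} lives in the universal Galois covering rather than in $P$ itself, so \Cref{lem:convidem} cannot be applied --- and the route you sketch (descending the exterior region of the $\g$-vector fan through the push-down functor from the covering to the orbit category) is essentially the same future direction the authors themselves propose when they write that one might ``examine the relationship between the $\g$-vector fan of a $\K$-algebra and the $\g$-vector fan of (finite quotients) of its universal Galois cover.'' Your fallback on \Cref{prop:conj_lim} coincides with the paper's own partial evidence.

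To be clear about status: neither you nor the paper closes the descent step, namely that an open region avoiding all maximal cones upstairs projects to a region avoiding all maximal cones of $\incidencealgebra{\K}{P}$ downstairs, and this is a genuine open problem rather than an oversight in your write-up. One small caution on your sketch: the obstruction region constructed in \Cref{thm:notgtame} lives in $K_0(\proj L)_\R$ for a \emph{finite convex subcategory} $L$ of the covering, and the covering itself is typically infinite-dimensional, so before any push-down argument one must first explain how to embed that region into a Grothendieck group on which the fundamental group acts and on which a comparison with $K_0(\proj \incidencealgebra{\K}{P})_\R$ makes sense; as you note, no such framework currently exists, which is exactly why the statement remains conjectural.
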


One could conceivably aim to prove this conjecture by examining multiply connected posets with hyperbolic incidence $\K$-algebras. Here we give such an example, taken from the literature \cite{Les03,Sim10}. It is not clear to the authors whether this example is \g-tame.
\begin{example}
    Let $P$ be the multiply connected poset displayed below.
    \[
    \begin{tikzpicture}[yscale=.4,xscale=.8]
        \node (MinLeft) at (0,0) [] {$\bullet$};
        \node (MinRight) at (2,0) [] {$\bullet$};
        \node (childLeft) at (-1,1) [] {$\bullet$};
        \node (childRight) at (0,2) [] {$\bullet$};
        \node (childchildLeft) at (-1,3) [] {$\bullet$};
        \node (MaxLeft) at (0,4) [] {$\bullet$};
        \node (MaxRight) at (2,4) [] {$\bullet$};

        \draw[-latex] (MinLeft)--(childLeft);
        \draw[-latex] (MinLeft)--(childRight);
        \draw[-latex] (MinLeft)--(MaxRight);
        \draw[-latex] (MinRight)--(MaxRight);
        \draw[-latex] (MinRight)--(MaxLeft);
        \draw[-latex] (childRight)--(MaxLeft);
        \draw[-latex] (childLeft)--(childchildLeft);
        \draw[-latex] (childchildLeft)--(MaxLeft);
        
    \end{tikzpicture}
    \]
    The $\K$-algebra $\incidencealgebra{\K}{P}$ is not a concealed $\K$-algebra. To see this, observe that the indecomposable $\incidencealgebra{\K}{P}$-module with dimension vector
    \[\begin{tikzpicture}[yscale=.2,xscale=0.25]
        \node (MinLeft) at (0,0) [] {$1$};
        \node (MinRight) at (1,0) [] {$1$};
        \node (childLeft) at (-1,1) [] {$0$};
        \node (childRight) at (0,2) [] {$0$};
        \node (childchildLeft) at (-1,3) [] {$0$};
        \node (MaxLeft) at (0,4) [] {$1$};
        \node (MaxRight) at (1,4) []{$1$};
    \end{tikzpicture}\]
    has projective dimension 2 and injective dimension 2 and hence by \cite[Lemma 2.1]{HRS96} the $\K$-algebra is not quasi-tilted, and in particular not concealed. Therefore, we may not apply \Cref{thm:notgtame} to decide whether this poset has a \g-tame incidence $\K$-algebra.
    The universal Galois cover of $\incidencealgebra{\K}{P}$ contains a hereditary $\K$-algebra of type $\qDtildetilde_7$ as a convex subcategory \cite[§1]{Les03}, so using the characterization of wild posets in \Cref{thm:Les03}, we observe that $\incidencealgebra{\K}{P}$ is a wild $\K$-algebra. Moreover, for any primitive idempotent $e\in \incidencealgebra{\K}{P}$, we have that $\incidencealgebra{\K}{P}/(e)$ is not wild, whence $\incidencealgebra{\K}{P}$ is hyperbolic. 
\end{example}

Another approach might be to examine the relationship between the \g-vector fan of a $\K$-algebra and the \g-vector fan of (finite quotients) of its universal Galois cover. We urge those interested to explore this further. 

We finish this section by arguing that the class of possible counterexamples to \Cref{conj:g_tame_poset} is rather constrained. Before our argument culminates in \Cref{prop:conj_lim}, we develop the necessary preparation.

Given an integer $\ell\geq 2$, let $C_\ell$ denote the following poset of type $\qAtilde_{2\ell -1}$:
    \begin{equation}\label{eq:Cell}\tag{$C_\ell$}
        \begin{tikzcd}[row sep = 3em, column sep=2em,every arrow/.append style={-latex}]
p^+_1                 & p^+_2                 & \cdots                                          & p^+_{\ell -1}               & p^+_\ell              \\
                                   &                                 &                                                 &                                         &                                    \\
p^\div_1 \arrow[uu,crossing over] \arrow[rrrruu,crossing over] & p^\div_2 \arrow[luu,crossing over] \arrow[uu,crossing over] & \cdots  \arrow[luu,crossing over]                                        & p^\div_{\ell -1} \arrow[uu,crossing over] \arrow[luu,crossing over] & p^\div_\ell \arrow[uu,crossing over] \arrow[luu,crossing over]
\end{tikzcd}
    \end{equation}
In the literature, such a poset is often called a \textit{crown}.
Adding a maximal element $\omega$ and a minimal element $0$ yields a poset we denote by $C_{\ell}^{\lozenge}$, as displayed below.
\begin{equation}\label{eq:Cell_loz}\tag{$C_\ell^{\lozenge}$}
    \begin{tikzcd}[row sep=3em,column sep=2em,every arrow/.append style={-latex}]
                                   &                                 & \omega                                               &                                         &                                    \\
p^+_1 \arrow[rru]                  & p^+_2 \arrow[ru]                & \cdots                                          & p^+_{\ell -1} \arrow[lu]                & p^+_\ell \arrow[llu]               \\
                                   &                                 &                                                 &                                         &                                    \\
p^\div_1 \arrow[uu,crossing over] \arrow[rrrruu,crossing over] & p^\div_2 \arrow[luu,crossing over] \arrow[uu,crossing over] & \cdots  \arrow[luu,crossing over]                                        & p^\div_{\ell -1} \arrow[uu,crossing over] \arrow[luu,crossing over] & p^\div_\ell \arrow[uu,crossing over] \arrow[luu,crossing over] \\
                                   &                                 & 0 \arrow[llu] \arrow[lu] \arrow[ru] \arrow[rru] &                                         &                                   
\end{tikzcd}
\end{equation}
The existence of a unique maximal element (or indeed a unique minimal element) ensures that $C_{\ell}^{\lozenge}$ is simply connected. Note that the poset $C_2^{\lozenge}$ is a subposet of the poset with the following Hasse quiver:
    \begin{equation}\label{eq:8}\tag{$\mathbf{8}$}
        \begin{tikzcd}[row sep=1.00em, column sep=1.75em,every arrow/.append style={-latex}]
                   & \omega                       &                    \\
p^{+}_1 \arrow[ru] &                               & p^{+}_1 \arrow[lu] \\
                   & m \arrow[lu] \arrow[ru] &                    \\
p^{\div}_1 \arrow[ru] &                               & p^{\div}_2 \arrow[lu] \\
                   & 0 \arrow[lu] \arrow[ru] &                   
\end{tikzcd}
    \end{equation}
This poset is often called the ``\textit{figure 8}'', and we denote it here by $\mathbf{8}$. Note that $\mathbf{8}$ is tame, and thus also \g-tame.

One can use the posets of the form $C_\ell^{\lozenge}$ to characterize the posets whose incidence $\K$-algebras are of global dimension 2 or greater.
Recall that the \emph{global dimension} of a finite-dimensional $\K$-algebra $\Lambda$ is defined as the supremum of the projective dimension of all finitely generated $\Lambda$-modules. The incidence $\K$-algebra of a finite poset is always of finite global dimension \cite[IX.10.3]{Mit65}, but the global dimension may depend on the characteristic of $\K$ \cite[Propostion 2.3]{IZ90}. One can nevertheless characterize the incidence $\K$-algebras of global dimension greater than 2, in a manner that is independent of the characteristic. 

\begin{theorem}[{\cite[Theorem 3.3]{IZ90}}]\label{thm:IZ90.3.3}
    Let $P$ be a finite poset. Then $\incidencealgebra{\K}{P}$ has global dimension at most 2 precisely when the following two criteria both hold.
    \begin{enumerate}[label=(IZ\arabic*)]
        \item\label{IZ1} no subposet of $P$ is isomorphic to $C_\ell^{\lozenge}$ for any $\ell\geq 3$,
        \item\label{IZ2} any subposet of $P$ which is isomorphic to $C_2^{\lozenge}$ is contained in a subposet of $P$ which is isomorphic to $\mathbf{8}$.
    \end{enumerate}
\end{theorem}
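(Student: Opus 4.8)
The plan is to convert the homological statement ``$\gldim{\incidencealgebra{\K}{P}}\le 2$'' into a topological statement about open intervals, and then to extract the combinatorial conditions (IZ1) and (IZ2) by classifying which intervals can carry homology. First I would set up the dictionary. For $x<y$ write $(x,y)=\{z\in P\sth x<z<y\}$ for the open interval and let $\Delta(x,y)$ denote its order complex, the simplicial complex of chains in $(x,y)$. The key input is the standard identification
\[
    \Ext^{n}_{\incidencealgebra{\K}{P}}(S_x,S_y)\;\cong\;\widetilde{H}^{\,n-2}(\Delta(x,y);\K)\qquad(n\ge 1),
\]
with the convention $\widetilde{H}^{-1}(\emptyset)=\K$ recording covering relations; this comes from the chain resolution of a simple module over an incidence algebra. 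Since $\Ext^{n}(S_x,S_y)=0$ unless $x\le y$, and $\incidencealgebra{\K}{P}$ has finite global dimension \cite[IX.10.3]{Mit65}, this yields the reformulation
\[
    \gldim{\incidencealgebra{\K}{P}}\le 2\iff \widetilde{H}^{\,i}(\Delta(x,y);\K)=0\ \text{for all}\ i\ge 1\ \text{and all}\ x<y .
\]
So the theorem reduces to showing that all open-interval complexes of $P$ are acyclic in positive degrees exactly when (IZ1) and (IZ2) hold.

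For the implication that a forbidden configuration forces $\gldim\ge 3$, suppose first $C_\ell^{\lozenge}$ occurs as a subposet with $\ell\ge 3$. Its $2\ell$ middle elements form a bipartite $2\ell$-cycle, whose order complex is homotopy equivalent to $S^1$, and I would argue that the interval $(0,\omega)$ \emph{computed inside $P$} still has $\widetilde{H}^{1}\ne 0$. The point is that any extra element $z\in(0,\omega)$ can contribute a triangle $p^{\div}_i<z<p^{+}_j$ only for pairs already comparable in the crown, since $p^{\div}_i<z<p^{+}_j$ forces $p^{\div}_i<p^{+}_j$, and the subposet being an \emph{induced} $C_\ell^{\lozenge}$ limits such comparabilities to the $2\ell$ edges of the cycle; in particular no single $z$ can lie below all $p^{+}_j$ and above all $p^{\div}_i$ (that would collapse the crown to complete bipartite, impossible for $\ell\ge 3$). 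Thus the added $2$-cells only fill arcs of the cycle, and a nerve/Mayer--Vietoris argument shows the fundamental class survives. The failure of (IZ2) is the $\ell=2$ analogue: a $C_2^{\lozenge}$ contained in no $\mathbf{8}$ is precisely a $4$-crown for which $P$ has no element comparable to all four middle elements (such an element would span an induced $\mathbf{8}$ together with the crown), so again no cone point exists and $\widetilde{H}^{1}(\Delta(0,\omega))\ne 0$.

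For the converse, by the dictionary I would pick an open interval $(x,y)$ with $\widetilde{H}^{i}(\Delta(x,y))\ne 0$ for some $i\ge 1$, chosen minimal (fewest elements) with this property, and show $[x,y]\cong C_\ell^{\lozenge}$. A minimal such interval can contain no element comparable to all others, since that would exhibit its complex as a cone and hence acyclic; and because every proper sub-interval is acyclic, a comparison of $\Delta(x,y)$ with the nerve of the covering by the stars of its elements should collapse all higher homology and identify $\Delta(x,y)$, up to homotopy, with a one-dimensional cycle graph. The poset structure then forces this cycle to be the bipartite $2\ell$-cycle of a crown. If $\ell\ge 3$ this violates (IZ1); if $\ell=2$, the absence of a cone point means no element is comparable to all four middle elements, so the $C_2^{\lozenge}$ lies in no $\mathbf{8}$, violating (IZ2).

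The main obstacle, shared by both directions, is the topological bookkeeping that pins all positive-degree homology of interval complexes to crowns: on one side, that the transitivity-constrained partial fillings of a long crown can never annihilate its fundamental cycle, and on the other, that a \emph{minimal} interval carrying any homology above degree $0$ is necessarily a crown with its homology concentrated in degree $1$. Both rest on controlling the nerve of the star-covering of an interval's order complex subject to the comparabilities an induced poset permits; once that is established, everything else is the formal dictionary of the first step.
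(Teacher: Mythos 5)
The paper does not prove this statement; it is quoted directly from Igusa--Zacharia \cite[Theorem 3.3]{IZ90}, so there is no internal proof to compare against. Your overall strategy --- translating $\gldim{\incidencealgebra{\K}{P}}\le 2$ into the vanishing of $\widetilde{H}^{i}(\Delta(x,y);\K)$ for all $i\ge 1$ and all open intervals, via the dictionary $\Ext^{n}(S_x,S_y)\cong\widetilde{H}^{n-2}(\Delta(x,y);\K)$ --- is the standard route to results of this kind and the reduction itself is correct.

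The genuine gaps sit exactly where you flag ``the main obstacle,'' and they are not bookkeeping: they are the content of the theorem. In the forward direction, to show that a $C_\ell^{\lozenge}$-subposet forces $\gldim{\incidencealgebra{\K}{P}}\ge 3$ you must show the crown's $1$-cycle is not a boundary in $\Delta\bigl((0,\omega)_P\bigr)$. Your observation that a $2$-simplex $p^{\div}_i<z<p^{+}_j$ can only occur over an edge of the crown controls the $2$-simplices containing two crown vertices, but says nothing about $2$-chains supported on simplices meeting the crown in at most one vertex, through which the class could a priori bound; moreover, killing the cycle does not require a cone point. For instance, with $\ell=2$, elements $z_1,z_2$ satisfying $p^{\div}_1,p^{\div}_2<z_1<p^{+}_1$ and $p^{\div}_1,p^{\div}_2<z_2<p^{+}_2$ make the original $4$-cycle homologous to a new one on $\{p^{\div}_1,p^{\div}_2,z_1,z_2\}$, so the nonvanishing witness migrates to a different crown; a correct proof must track this (by induction, or by exhibiting an explicit $1$-cocycle pairing nontrivially with some crown), and ``a nerve/Mayer--Vietoris argument shows the fundamental class survives'' is an assertion rather than an argument. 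In the converse direction, the claim that a minimal interval with $\widetilde{H}^{\ge 1}\ne 0$ is itself isomorphic to some $C_\ell^{\lozenge}$ is both stronger than needed and unestablished: ``a comparison with the nerve of the star-covering should collapse all higher homology'' is a hope, not a proof, and what one actually has to do is rule out homology in degrees $\ge 2$ and extract from the interval a crown \emph{subposet} violating \ref{IZ1} or \ref{IZ2}. As written, the proposal is a correct set-up together with an accurate description of what remains to be proved, but the remaining part is the substance of the theorem.
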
 

If $P$ is a poset and $P'\subseteq P$ is a subposet, one defines the \textit{convex hull} $P'$ in $P$ as the smallest convex subposet of $P$ containing $P'$. It is denoted $\overline{P'}$. 

\begin{lemma}\label{lem:contr_wild}
Let $P$ be a finite poset.
    \begin{enumerate}
        \item\label{lem:contr_wild0.5} Let $P'$ be a subposet of $P$. If $P'$ has a unique maximal or a unique minimal element, so does its convex hull $\overline{P'}$ in $P$. In particular, the poset $\overline{P'}$ is simply connected.
        \item\label{lem:contr_wild1} Let $P'$ be a poset which is a subposet or a contraction of $P$. If $P'$ is wild, so is $P$.
        \item\label{lem:contr_wild0} For $\ell\geq 2$, incidence $\K$-algebra of the poset $C_\ell^{\lozenge}$ is representation-infinite, and wild precisely when $\ell\geq5$. 
        %\item\label{lem:contr_wild2} The poset $C_{\ell}^\lozenge$ is a contraction of a poset of the form $C^{\lozenge}(P_\omega,P_1^\div, P_2^\div,\dots, P_{\ell}^\div,P_1^+, P_2^+,\dots, P_{\ell}^+,P_0)$.
        \item\label{lem:contr_wild3} Given an integer $\ell \geq 5$ and a poset embedding $\begin{tikzcd}C_{\ell}^{\lozenge}\arrow[r,hook] & P,\end{tikzcd}$ then the convex hull of the image of $C_{\ell}^{\lozenge}$ in $P$ is simply connected and wild. In particular, by \Cref{cor:concealed_gvecfan}, no convex hull of this form may have a \g-tame incidence $\K$-algebra.
    \end{enumerate}
\end{lemma}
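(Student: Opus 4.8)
The plan is to dispatch the four assertions in order: the first is purely order-theoretic, while the last three feed into one another and culminate in an application of Leszczyński's criterion (\Cref{thm:Les03}) together with \Cref{cor:concealed_gvecfan}.

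For the first assertion I would start from the explicit description of the convex hull, $\overline{P'}=\{z\in P \mid x\leq z\leq y \text{ for some } x,y\in P'\}$; this set visibly contains $P'$, is immediately seen to be convex, and is contained in any convex subposet containing $P'$, so it is the convex hull. In a finite poset a unique maximal element is automatically a greatest element, so if $m$ is the unique maximal element of $P'$ then $x\leq m$ for all $x\in P'$. Given $z\in\overline{P'}$ we have $z\leq y\leq m$ for some $y\in P'$, and since $m\in P'\subseteq\overline{P'}$ this exhibits $m$ as the greatest, hence unique maximal, element of $\overline{P'}$; the unique-minimal case is dual. The simple connectivity then follows at once from the fact that any finite poset with a unique maximal (or minimal) element is simply connected.

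For the second assertion I would use the fully faithful functors of \eqref{eq:fullyfaithful_subtr} and \eqref{eq:fullyfaithful_contr}, which realise $\Modf(\incidencealgebra{\K}{P'})$ as a full subcategory of $\Modf(\incidencealgebra{\K}{P})$ whenever $P'$ is a subposet or a contraction of $P$. Both are exact: the contraction functor is restriction along the surjection of functor categories $\Modf(\K)^{P'}\to\Modf(\K)^{P}$ induced by $P\twoheadrightarrow P'$, while the subtraction functor is the corner-algebra embedding attached to $\incidencealgebra{\K}{P'}\cong e\incidencealgebra{\K}{P}e$. Wildness of $P'$ is witnessed by a representation embedding $\Modf(\K\langle x,y\rangle)\to\Modf(\incidencealgebra{\K}{P'})$, and composing it with the exact fully faithful functor above produces an exact functor that preserves indecomposability and reflects isomorphisms, witnessing wildness of $\incidencealgebra{\K}{P}$; equivalently, since tameness is inherited by corner algebras and by contractions, Drozd's dichotomy forces $P$ to be wild. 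This is the step I expect to demand the most care: the delicate point is the exactness (and the representation-embedding form) of the subtraction functor, which I would justify through the standard theory of the algebras $e\Lambda e$.

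For the third assertion, representation-infiniteness is cheap: $C_\ell$ is a subposet of $C_\ell^\lozenge$ of type $\qAtilde_{2\ell-1}$, which contracts onto $\qAtilde_3$ and is therefore representation-infinite, so \Cref{lem:Lou75} forces $\incidencealgebra{\K}{C_\ell^\lozenge}$ to be representation-infinite for every $\ell\geq2$. The heart of the assertion is the dichotomy at $\ell=5$. For $\ell\geq5$ the set $\{0,p^\div_1,\ldots,p^\div_5\}$ is a convex subposet of $C_\ell^\lozenge$ (nothing lies strictly between $0$ and any $p^\div_i$, and the $p^\div_i$ are pairwise incomparable) whose Hasse quiver is the five-leaf star centred at $0$, that is, a poset of type $\qT_5$; its incidence algebra is the hereditary path algebra $\K\qT_5$ and is thus concealed of the wild type $\qT_5$ occurring in \Cref{tab:Les frames}. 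As $C_\ell^\lozenge$ is simply connected, its universal Galois covering is itself, so \Cref{thm:Les03} applies verbatim and yields wildness. For $2\leq\ell\leq4$ the only high-valency vertices are $0$ and $\omega$, of valency $\ell\leq4$, so the stars they span are at worst of the tame type $\qDtilde_4$; a finite inspection of the small posets $C_2^\lozenge,C_3^\lozenge,C_4^\lozenge$ via \Cref{thm:Les03} shows that no convex concealed subposet of wild type occurs, so these are tame (this can also be read off from \cite{Sim10}). Finally, for the fourth assertion I would assemble the previous three: fixing $\ell\geq5$ and an embedding $C_\ell^\lozenge\hookrightarrow P$, write $Q$ for the convex hull of the image. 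Since $C_\ell^\lozenge$ has both a unique maximal and a unique minimal element, the first assertion makes $Q$ simply connected; since $C_\ell^\lozenge$ is a subposet of $Q$ and is wild by the third assertion, the second assertion makes $Q$ wild. Being simply connected, $Q$ is governed by \Cref{cor:concealed_gvecfan}, so $\incidencealgebra{\K}{Q}$ is \g-tame precisely when it is tame; as $Q$ is wild it is not tame, whence $\incidencealgebra{\K}{Q}$ is not \g-tame, which is exactly the stated conclusion.
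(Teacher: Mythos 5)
Your treatment of assertions (1), (2) and (4), and of the wild direction of (3), matches the paper's: (1) is the same elementary order-theoretic argument the paper dismisses as immediate; (2) is simply cited in the paper (to Loupias and to \cite{CR19}), and your sketch via the fully faithful functors \eqref{eq:fullyfaithful_subtr}--\eqref{eq:fullyfaithful_contr} (or, more safely, via inheritance of tameness by corner algebras and contractions plus Drozd's dichotomy) is a reasonable substitute; for $\ell\geq 5$ you exhibit the convex $\qT_5$-star $\{0,p^\div_1,\dots,p^\div_5\}$ where the paper uses the dual star $\{\omega,p^+_1,\dots,p^+_5\}$, and the appeal to \Cref{thm:Les03} via simple connectedness is identical; and (4) is assembled from the earlier parts exactly as in the paper.

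The genuine gap is in the tame direction of (3), i.e.\ showing that $\incidencealgebra{\K}{C_\ell^{\lozenge}}$ is \emph{not} wild for $2\leq\ell\leq 4$. Your valency remark (``the stars spanned by $0$ and $\omega$ are at worst of type $\qDtilde_4$'') is not to the point: the obstruction in \Cref{thm:Les03} is the presence of a convex subposet whose incidence algebra is \emph{concealed} of one of the listed wild types, and such a concealed algebra need not contain any $\qT_5$-star, so bounding vertex valencies rules out nothing. More seriously, ``a finite inspection shows that no convex concealed subposet of wild type occurs'' silently includes the case where the convex subposet is all of $C_4^{\lozenge}$ itself, and deciding whether a given bound quiver algebra is concealed of some wild type is not something one reads off the Hasse quiver. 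This is precisely the step the paper handles with a separate idea: it checks that no \emph{proper} convex subposet of $C_4^{\lozenge}$ is wild, and then excludes $C_4^{\lozenge}$ itself from being wild concealed by observing via \Cref{thm:IZ90.3.3} that its global dimension exceeds $2$, whereas concealed (indeed quasi-tilted) algebras have global dimension at most $2$ by \cite[Lemma 2.1]{HRS96}; tameness of $C_2^{\lozenge}$ and $C_3^{\lozenge}$ then follows from part (2) since they are contractions of $C_4^{\lozenge}$. Without the global-dimension obstruction (or an explicit comparison with a classification of wild concealed algebras), your inspection does not close this case, and the fallback citation to \cite{Sim10} is doing the real work there.
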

\begin{proof}
    Since \eqref{lem:contr_wild0.5} immediately follows from the definitions, and \eqref{lem:contr_wild1} is known to hold \cite[Lemma 2.1]{CR19} \cite[Proposition 1.3]{Lou75},
    we first prove \eqref{lem:contr_wild0}. 
    Since $C_\ell^{\lozenge}$ contains a subposet $C_{\ell}$ of type $\qAtilde$, we deduce from \Cref{lem:Lou75} that $C_{\ell}^{\lozenge}$ has a representation-infinite incidence $\K$-algebra for all $\ell\geq 2$. 
    If $\ell\geq 5$, the subposet consisiting of the elements in $\{\omega, p^+_1, \dots , p^+_5 \}$ forms a convex subposet of type $\qT_5$, whence it follows from \Cref{lem:convidem}\eqref{lem:convidem2} and \Cref{thm:Les03} that $\incidencealgebra{\K}{C_{\ell}^{\lozenge}}$ is wild.
    On the other hand, we have that $C_4^{\lozenge}$ has a tame incidence $\K$-algebra. Suppose to the contrary that the $C_4^{\lozenge}$ has a wild incidence $\K$-algebra. By simple-connectedness and \Cref{thm:Les03}, there should exist a convex subposet of $C_4^{\lozenge}$ whose incidence $\K$-algebra is wild concealed. It is readily checked that no proper convex subposet of $C_4^{\lozenge}$ has a wild incidence $\K$-algebra, whence $\incidencealgebra{\K}{C_4^{\lozenge}}$ must be a wild concealed $\K$-algebra itself. However, since \Cref{thm:IZ90.3.3} proves that the global dimension of $\incidencealgebra{\K}{C_4^{\lozenge}}$ is larger than 2, it cannot be a concealed $\K$-algebra \cite[Lemma 2.1]{HRS96}, a contradiction, so $\incidencealgebra{\K}{C_4^{\lozenge}}$ is indeed tame. It is easy to see that $C_2^{\lozenge}$ and $C_3^{\lozenge}$ are contractions of $C_4^{\lozenge}$, whereby it now follows from \eqref{lem:contr_wild1} that $\incidencealgebra{\K}{C_2^{\lozenge}}$ and $\incidencealgebra{\K}{C_3^{\lozenge}}$ are tame as well.\footnote{We note in passing that the poset $C_3^{\lozenge}$ is exactly the poset $\totallyordered{2}\times\totallyordered{2}\times\totallyordered{2}$, see \Cref{ex:repfiniteAnTensorAm}.}

    We conclude the proof by addressing \eqref{lem:contr_wild3}. Since $C_\ell^{\lozenge}$ is wild for $\ell\geq 5$, the assertion in \eqref{lem:contr_wild3} now follows from \eqref{lem:contr_wild0.5}, \eqref{lem:contr_wild1} and \eqref{lem:contr_wild0} above.
\end{proof}

We are now in a position to argue that most wild posets do not have \g-tame incidence $\K$-algebras. In conclusion, we contend that our \Cref{conj:g_tame_poset} is justified. In an informal sense, \Cref{prop:conj_lim} shows that at most only a handful wild posets may have a \g-tame incidence $\K$-algebra, since we will be able to exclude all posets admitting a subposet which is isomorphic to $C_{\ell}^{\lozenge}$ for some $\ell\geq 5$.

\begin{proposition}\label{prop:conj_lim}
    Let $P$ be a finite multiply connected poset such that $\incidencealgebra{\K}{P}$ is both wild and \g-tame. Then $P$ must have one of the following three properties (if it exists at all):
\begin{enumerate}
    \item the global dimension of $\incidencealgebra{\K}{P}$ is exactly 2,
    \item the global dimension of $\incidencealgebra{\K}{P}$ is greater than 2 and $P$ contains a subposet of the form $C_3^{\lozenge}$ or $C_4^{\lozenge}$, so that the condition \ref{IZ1} in \Cref{thm:IZ90.3.3} is not met. 
    \item the global dimension of $\incidencealgebra{\K}{P}$ is greater than 2 and the condition \ref{IZ2} in \Cref{thm:IZ90.3.3} is not met, i.e. there is a subposet of $P$ which is isomorphic to $C_2^{\lozenge}$ and cannot be extended to a subposet of $P$ which is isomorphic to $\mathbf{8}$.
\end{enumerate}
\end{proposition}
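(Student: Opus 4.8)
The plan is to run a case analysis on $\gldim{\Lambda}$, where $\Lambda\defeq\incidencealgebra{\K}{P}$; recall this is finite by \cite[IX.10.3]{Mit65}. Since each of the three listed properties forces $\gldim{\Lambda}\geq 2$, the first task is to rule out $\gldim{\Lambda}\leq 1$ using the standing hypotheses that $\Lambda$ is wild and $\g$-tame. If $\gldim{\Lambda}=0$, then $\Lambda$ is semisimple, hence representation-finite, contradicting wildness. If $\gldim{\Lambda}=1$, then $\Lambda$ is hereditary, so $\Lambda\cong\K Q_P$ with $Q_P$ a wild quiver; but a wild hereditary algebra is not $\g$-tame by \Cref{prop:g_tam}, contradicting $\g$-tameness. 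Hence $\gldim{\Lambda}\geq 2$.

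Before splitting into cases, I would isolate the one place where $\g$-tameness does real work: under our hypotheses, $P$ admits no subposet isomorphic to $C_\ell^{\lozenge}$ with $\ell\geq 5$. Indeed, such an embedding yields, by \Cref{lem:contr_wild}\eqref{lem:contr_wild3}, a convex hull $\overline{C_\ell^{\lozenge}}\subseteq P$ which is simply connected and wild, and therefore not $\g$-tame by \Cref{cor:concealed_gvecfan}; then \Cref{lem:convidem}\eqref{lem:convidem2} propagates non-$\g$-tameness from a convex subposet up to $P$, contradicting the hypothesis on $\Lambda$. I expect this observation to be the main (and essentially only) obstacle; everything else is bookkeeping around \Cref{thm:IZ90.3.3}.

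With these preliminaries in hand the conclusion follows quickly. If $\gldim{\Lambda}=2$ we are in property (1). Otherwise $\gldim{\Lambda}>2$, so \Cref{thm:IZ90.3.3} guarantees that at least one of \ref{IZ1}, \ref{IZ2} fails. If \ref{IZ1} fails, then $P$ contains a subposet isomorphic to $C_\ell^{\lozenge}$ for some $\ell\geq 3$; the preceding observation excludes $\ell\geq 5$, forcing $\ell\in\{3,4\}$, which is exactly property (2). If instead \ref{IZ1} holds, then \ref{IZ2} must fail, which is property (3). This exhausts all cases and completes the argument. I would remark finally that the hypothesis that $P$ be multiply connected is not used in the proof itself; it merely delimits the range of cases left open after \Cref{cor:concealed_gvecfan}, which already settles the simply connected situation.
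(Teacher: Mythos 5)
Your proposal is correct and follows essentially the same route as the paper: rule out the hereditary case via \Cref{prop:g_tam}, exclude subposets $C_\ell^{\lozenge}$ with $\ell\geq 5$ by combining \Cref{lem:contr_wild}, \Cref{cor:concealed_gvecfan} and \Cref{lem:convidem}, and then let \Cref{thm:IZ90.3.3} do the remaining bookkeeping. The only difference is that you spell out the case analysis around \ref{IZ1} and \ref{IZ2} that the paper leaves implicit, which is harmless.
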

\begin{proof}
    If the global dimension of $\incidencealgebra{\K}{P}$ is less than 2, then $\incidencealgebra{\K}{P}$ is hereditary, so it follows directly from \Cref{prop:g_tam} that $\incidencealgebra{\K}{P}$ cannot be \g-tame if it is wild, which would contradict our assumptions on $P$.
    It now suffices to show that $P$ does not have a \g-tame incidence $\K$-algebra whenever $P$ contains a subposet $P'$ of the form $C_\ell^{\lozenge}$ for some $\ell\geq 5$.
    In this case, we have that $\incidencealgebra{\K}{\overline{P'}}$ is not \g-tame by \Cref{lem:contr_wild}\eqref{lem:contr_wild3}, whence $\incidencealgebra{\K}{P}$ cannot be \g-tame, as a result of \Cref{lem:convidem}\eqref{lem:convidem2}.
\end{proof}

\appendix
\section{\texorpdfstring{$\tau$}{tau}-tilting reduction of concealed algebras}\label{sec:red}
\renewcommand{\theequation}{\Alph{section}.\alph{equation}}

This appended section builds on \Cref{sec:3.1-concealed_typeA}. We will use \Cref{thm:red_to_hyperbolic} to prove results on $\tau$-tilting reductions of concealed $\K$-algebras. For simplicity, we keep the assumption that the field $\K$ is algebraically closed, as imposed in \Cref{sec:g}.

\begin{definition}\label{def:Bongartz}
Let $\Lambda$ be a finite-dimensional $\K$-algebra and let $(M,\,R)$ be a $\tau$-rigid pair in $\mod({\Lambda})$. The \textit{Bongartz completion} of $(M,\,R)$ is the unique support $\tau$-tilting pair $(M^+,\,R)$ satisfying $\Gen{M^+}={^\perp}(\tau M)\cap R^\perp$. The \textit{Bongartz complement} of $(M,\,R)$ is the $\tau$-rigid $\Lambda$-module $M^+/M$.
\end{definition}

The Bongartz completion of a $\tau$-rigid pair always exists \cite[Theorem 2.10]{AIR14}.
The notion of Bongartz completion has its origin in the study of tilting modules over hereditary $\K$-algebras. In this setting, the Bongartz completion of a partial tilting module $U$ can be defined in terms of a universal extension of the $\K$-algebra $\Lambda$ in $\add(U)$ \cite[Lemma 2.1]{Bon81}. In the more general setting of $\tau$-tilting theory, such a universal extension does not exist, but the Bongartz completion can be recovered from a right exact sequence. Although this fact is well-known, we give a proof in \Cref{lem:BonSeq} with reference to the relevant literature.

Recall that an additive subcategory $\mathcal{X}$ of $\Modf(\Lambda)$ is said to be \textit{contravariantly finite} (resp \textit{covariantly finite}) if for every $\Lambda$-module $Y$, there exists a $\Lambda$-homomorphism $X \xrightarrow{f} Y$ (resp. $Y \xrightarrow{f} X$) such that the induced map $\Hom_{\Lambda}(X,f)$ (resp. $\Hom_{\Lambda}(f,X)$) is surjective. The $\Lambda$-homomorphism $f$ is called a \textit{right $\mathcal{X}$-approximation} (resp. \textit{left $\mathcal{X}$-approximation}) of $Y$.
A \textit{functorially finite} subcategory of $\Modf(\Lambda)$ is by definition a full subcategory that is both contravariantly and covariantly finite.

\begin{lemma}\label{lem:BonSeq}
%[{\cite[Lemma~3.7]{marks2017torsion}, \cite[Lemma~4.13]{BM21tau}\label{lem:BonSeq}\cite[Lemma~2.8]{ingalls2009noncrossing}}]
    Let $\Lambda$ be a finite-dimensional $\K$-algebra, $(M,\,R)$ a $\tau$-rigid pair in $\mod(\Lambda)$ and $M^+/M$ the Bongartz complement of $(M,\,R)$. Then, there exists a right exact sequence 
    \begin{equation*}
    \begin{tikzcd}
        \Lambda \arrow[r,"f"]&
        L \arrow[r]& M' \arrow[r]& 0,
    \end{tikzcd}
    \end{equation*}
    where $f$ is a minimal left $\add(M^+)$-approximation, the object $L$ is in $\add(M^+)$ and $M'$ is in $\add(M)$.
\end{lemma}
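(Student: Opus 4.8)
The plan is to realize the desired sequence as the cokernel sequence of a minimal left approximation and then to identify the cokernel using the torsion class attached to $M^+$. Concretely, let $f\colon \Lambda \to L$ be a minimal left $\add(M^+)$-approximation of the regular module $\Lambda$; this exists and is unique up to isomorphism because $\add(M^+)$, being the additive closure of a single module, is a functorially finite subcategory of $\Modf(\Lambda)$. Setting $M' \defeq \operatorname{coker}(f)$ yields a right exact sequence $\Lambda \xrightarrow{f} L \to M' \to 0$ in which $L \in \add(M^+)$ and $f$ is left minimal by construction. Writing $\mathcal{T} \defeq \Gen(M^+)$ for the functorially finite torsion class determined by the support $\tau$-tilting pair $(M^+,\,R)$, the module $M'$ already lies in $\mathcal{T}$, being a quotient of $L \in \add(M^+)$. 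It therefore remains to place $M'$ in $\add(M)$.

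The first step I would carry out is to show $M' \in \add(M^+)$ by verifying that $M'$ is $\Ext$-projective in $\mathcal{T}$; since the $\Ext$-projective objects of $\mathcal{T}=\Gen(M^+)$ are exactly $\add(M^+)$ by \cite{AIR14}, this suffices. Factor $f$ through its image as $\Lambda \twoheadrightarrow I \xhookrightarrow{\iota} L$, giving a short exact sequence $0 \to I \to L \to M' \to 0$. For any $T \in \mathcal{T}$, applying $\Hom_\Lambda(-,T)$ and using $\Ext^1_\Lambda(L,T)=0$ (as $L$ is $\Ext$-projective and $T\in\mathcal{T}$) identifies $\Ext^1_\Lambda(M',T)$ with the cokernel of $\iota^{*}\colon \Hom_\Lambda(L,T)\to \Hom_\Lambda(I,T)$. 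To see this cokernel vanishes, take $h\colon I \to T$; since $T\in\mathcal{T}=\Gen(M^+)$ there is an epimorphism $p\colon (M^+)^{k}\twoheadrightarrow T$, and projectivity of $\Lambda$ lets me lift $h$ precomposed with $\Lambda \twoheadrightarrow I$ to a map $\Lambda \to (M^+)^{k}$, which factors through $f$ by the approximation property. A short diagram chase then exhibits $h$ as $\iota^{*}$ of a map $L \to T$, so $\Ext^1_\Lambda(M',T)=0$ for all $T\in\mathcal{T}$ and hence $M' \in \add(M^+)$.

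The remaining and most delicate point, which I expect to be the main obstacle, is to upgrade $M' \in \add(M^+)$ to $M' \in \add(M)$, that is, to rule out summands of the Bongartz complement $M^+/M$ appearing in $M'$. This is where the special role of the complement (as opposed to an arbitrary $\Ext$-projective) must enter, and I would close it using the left-minimality of $f$ together with the defining property $\Gen(M^+)={}^{\perp}(\tau M)\cap R^{\perp}$: the indecomposable summands of $M^+/M$ are precisely the ``new'' $\Ext$-projectives of $\mathcal{T}$ that are not generated by $M$, and a complement summand surviving in the cokernel of a minimal approximation would contradict minimality, exactly as in the classical universal-extension construction over hereditary algebras, where $L\in\add(M^+/M)$ and the cokernel lies in $\add(M)$. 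This is the content recorded in \cite[Theorem~2.10]{AIR14} and in Jasso's setup of $\tau$-tilting reduction \cite{Jas15}, to which I would appeal to conclude that $M' \in \add(M)$.
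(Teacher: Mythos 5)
Your construction of the sequence and your verification that $M'=\operatorname{coker}(f)$ is $\Ext$-projective in $\mathcal{T}=\Gen{M^+}$ (hence lies in $\add(M^+)$, since $M^+$ is the Ext-progenerator of $\Gen{M^+}$ by \cite{AIR14}) are correct, and in fact more self-contained than the paper, which extracts this directly from \cite[Lemma~3.7]{marks2017torsion}. The problem is the step you yourself flag as the main obstacle: upgrading $M'\in\add(M^+)$ to $M'\in\add(M)$. As written, this step is not a proof. You assert that ``a complement summand surviving in the cokernel of a minimal approximation would contradict minimality,'' but you never derive that contradiction, and your proposed characterization of the complement summands as the Ext-projectives ``not generated by $M$'' does not obviously yield it: $M'$ is a quotient of $L\in\add(M^+)$, not of $M$, so nothing so far prevents a summand of $M^+/M$ from occurring in $M'$. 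The references you fall back on do not close the gap either: \cite[Theorem~2.10]{AIR14} only establishes existence of the Bongartz completion, and \cite{Jas15} does not record this right-exact sequence in the form needed — which is precisely why the paper assembles a proof rather than citing one.

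The two ingredients actually needed, and used in the paper, are: (i) for the minimal left $\mathcal{T}$-approximation $\Lambda\to T_0$ with cokernel $T_1$, an indecomposable in $\add(T_0\oplus T_1)$ is a summand of $T_0$ if and only if it is \emph{split projective} in $\mathcal{T}$, so that left-minimality forces the cokernel to contain no split projective summands \cite[Lemma~3.7]{marks2017torsion}; and (ii) every indecomposable summand of the Bongartz complement $M^+/M$ \emph{is} split projective in $\Gen{M^+}$ \cite[Lemma~4.13]{BM21tau}. Item (ii) is the only place where the Bongartz property of the complement genuinely enters, and it is exactly the fact missing from your argument. If you either cite it or prove it (e.g.\ by showing that an indecomposable summand $X$ of $M^+/M$ satisfies $X\notin\Gen(M^+/X)$, so every epimorphism onto $X$ from an object of $\mathcal{T}$ splits), your outline completes to a correct proof along essentially the same lines as the paper's.
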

\begin{proof}
Since the Bongartz completion $M^+$ is a support $\tau$-tilting $\Lambda$-module, we have that \\ $\mathcal{T}=\Gen{M^+}={^\perp}(\tau M)\cap R^\perp$ is a functorially finite torsion class of $\Modf(\Lambda)$ \cite[Theorem~2.7]{AIR14}. By the covariant finiteness of $\mathcal{T}$, we can construct the following exact sequence
\begin{equation*}
    \begin{tikzcd}
        \Lambda \arrow[r,"f"]&
        T_0 \arrow[r]& T_1 \arrow[r]& 0,
    \end{tikzcd}
    \end{equation*}
where $f$ is a minimal left $\mathcal{T}$-approximation. By \cite[Lemma~3.7]{marks2017torsion}, we know that $\add(T_0\oplus T_1)$ is the class of $\mathrm{Ext}$-projectives in $\mathcal{T}$, i.e. $\add(T_0\oplus T_1)=\{X\in \mathcal{T} \sth \Ext_\Lambda^1(X,\mathcal{T})=0\}$. Also, any $\Lambda$-module $X\in \add(T_0\oplus T_1)$ is in $\add(T_0)$ if and only if it is split projective in $\mathcal{T}$.
%all surjections $T\to X$ splits when $T\in \mathcal{T}$. 
Now, since all summands of $M^+/M$ are split projective by \cite[Lemma~4.13]{BM21tau}, we are done. 
\end{proof}

Let $(M,R)$ be a $\tau$-rigid pair in $\Modf(\Lambda)$. The \emph{$\tau$-tilting reduction} of $\Lambda$ with respect to $(M,R)$ is defined as the $\K$-algebra $C_{(M,R)}\defeq \End_{\Lambda}({M^+}/[M])\op$, where $(M^+,R)$ is the Bongartz completion of $(M,R)$ \cite{Jas15,DIRRT23}. Let $e\in\Lambda$ be a primitive idempotent and $\Lambda e$ the corresponding indecomposable projective $\Lambda$-module. For the $\tau$-rigid pair $(\Lambda e,0)$, one can consider the $\tau$-tilting reduction $C_{(\Lambda e,0)}$, which is obviously isomorphic to the factor $\K$-algebra $\Lambda/(e)$. 

It is known that the $\tau$-tilting reduction of a hereditary $\K$-algebra is hereditary \cite[§4.16(3)]{GL91}. We include a result letting us determine the representation-type.

\begin{lemma}\label{lem:red_minwild_quiver}
    Let $Q$ be a quiver and let $(M,\,R)$ be a non-zero $\tau$-rigid pair in $\Modf(\K Q)$ in which $M$ is postprojective.
    \begin{enumerate}
        \item\label{lem:red_minwild_quiver1} Suppose that $Q$ is hyperbolic. Then the $\tau$-tilting reduction of $\K Q$ with respect to $(M,\,R)$ is tame.
        \item\label{lem:red_minwild_quiver2} Suppose that $Q$ is tame. Then the $\tau$-tilting reduction of $\K Q$ with respect to $(M,\,R)$ is representation-finite.
    \end{enumerate}
\end{lemma}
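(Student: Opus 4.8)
The plan rests on three facts about the reduction $C_{(M,R)}$ of a hereditary algebra. First, $C_{(M,R)}$ is again hereditary, as recalled above from \cite[§4.16(3)]{GL91}. Second, by Jasso's reduction \cite{Jas15} the category $\mod C_{(M,R)}$ is realized as a wide (extension-closed, exact abelian) subcategory $\mathcal W\subseteq\mod\K Q$; for hereditary $H=\K Q$ this $\mathcal W$ is the perpendicular category cut out by the indecomposable summands of $M$ together with the simples supporting $R$. Since $\mathcal W$ is extension-closed and $H$ has global dimension one, the inclusion induces an embedding $K_0(C_{(M,R)})\hookrightarrow K_0(\K Q)$ compatible with the Euler forms, hence with the Tits forms; in particular $q_{C_{(M,R)}}$ is the restriction of $q_{\K Q}$ to the sublattice $K_0(\mathcal W)$. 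Third, by the Remark preceding \Cref{lem:negative_cone_and_postprojectives} (applied to any hereditary algebra, using that $q(|x|)\le q(x)$), a hereditary algebra is tame, i.e.\ not wild, precisely when its Tits form is positive semidefinite. This identification of $q_{C_{(M,R)}}$ as a restriction is the engine of both parts.

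\textbf{Part (2).} If $Q$ is tame then $q_{\K Q}$ is positive semidefinite, and the restriction of a positive semidefinite form to any sublattice remains positive semidefinite; thus $q_{C_{(M,R)}}$ is positive semidefinite and $C_{(M,R)}$ is not wild. Note this uses neither that $M$ be postprojective nor that the pair be nonzero, and it shows more generally that \emph{the $\tau$-tilting reduction of a not-wild hereditary algebra by any $\tau$-rigid pair is not wild}; I record this general principle for use below.

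\textbf{Part (1).} By the iterativity of Jasso's reduction \cite{Jas15}, $C_{(M,R)}$ is obtained from $\K Q$ by peeling off the indecomposable summands of the pair one at a time, each intermediate algebra being hereditary. It therefore suffices to peel a single summand off the hyperbolic algebra $\K Q$ so as to land on a not-wild hereditary algebra $C_1$, and then to invoke the general principle from Part (2) for the remaining reduction, which takes place over $C_1$. If the summand peeled is a projective $P(v)$ (a summand of $R$), then $C_1\cong\K Q/(e_v)=\K(Q\setminus v)$, the full subquiver on the remaining vertices; since $Q$ is hyperbolic this proper subquiver is a union of tame quivers (\Cref{definition:hyperbolic}), so $C_1$ is not wild. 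If instead the first summand peeled is an indecomposable postprojective module $M_1$, I identify $K_0(C_1)=K_0(M_1^\perp)=\{y\mid\langle[M_1],y\rangle=0\}$ and use that $[M_1]$ is a postprojective real Schur root: a product of reflection functors (APR-tilts) gives an isometry $\phi\colon(K_0(\K Q),\langle-,-\rangle)\to(K_0(\K Q'),\langle-,-\rangle)$ between the Euler lattices of $\K Q$ and of a reoriented quiver $Q'$, carrying $[M_1]$ to $\pm[P'(v)]$ for an indecomposable projective over $Q'$. As an isometry of Euler lattices $\phi$ carries the Euler-perpendicular $K_0(M_1^\perp)$ onto the coordinate hyperplane $\{z_v=0\}=K_0(P'(v)^\perp)$ and preserves $q$, so $q_{C_1}=q_{\K Q}|_{K_0(M_1^\perp)}$ is isometric to $q_{Q'\setminus v}$. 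Since hyperbolicity is orientation-independent (it is detected by the underlying graph through the representation types of the proper subquivers), $Q'$ is hyperbolic, whence $Q'\setminus v$ is a proper subquiver of a hyperbolic quiver and $q_{Q'\setminus v}$ is positive semidefinite. Either way $C_1$ is not wild, and the general principle finishes the argument.

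\textbf{Main obstacle.} The delicate point is the purely postprojective single-summand peel in Part (1): one must see that cutting out a postprojective exceptional module agrees, at the level of the Tits form, with deleting a vertex of a reoriented hyperbolic quiver. I would handle this through the reflection-functor description of the Euler form, verifying that the product of reflections sending the real Schur root $[M_1]$ to a simple root is an isometry of the (non-symmetric) Euler forms and therefore maps the Euler-perpendicular lattice of $M_1$ onto the coordinate hyperplane of the corresponding projective; the orientation-independence of hyperbolicity then yields positive semidefiniteness directly from \Cref{definition:hyperbolic}. I would also flag that Part (1) requires the pair $(M,R)$ to be nonzero, since $C_{(0,0)}=\K Q$ is itself wild; this is exactly the hypothesis $|M|+|R|\ge 1$ present in all intended applications.
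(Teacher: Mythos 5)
Your argument is correct, but it runs along a genuinely different track from the paper's. The paper proves (1) by showing that the Bongartz completion $M^{+}$ of a postprojective rigid module is itself postprojective (via the universal extension and \Cref{lem:BonSeq}), so that $\End_{\K Q}(M^{+})$ is hyperbolically concealed, hence hyperbolic, and its quotient by the nonzero idempotent $e_M$ is tame; (2) is handled the same way with ``tame concealed'' in place of ``hyperbolic''. You instead identify the reduction with a Geigle--Lenzing perpendicular category, note that its Euler form is the restriction of that of $\K Q$, and conclude from positive semidefiniteness of the Tits form; for (1) you peel one summand first and use a reflection isometry to match the Euler-perpendicular of a postprojective class with a coordinate hyperplane, where hyperbolicity gives semidefiniteness. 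Three remarks. First, your strengthened part (2) --- that the reduction of a non-wild hereditary algebra by \emph{any} $\tau$-rigid pair is non-wild, with no postprojectivity hypothesis --- appears to be right, and it contradicts the remark the paper places after the lemma: for the three-vertex quiver $1\to 2$, $1\to 3$, $2\to 3$ with $M$ the indecomposable of dimension vector $(1,0,1)$, the Bongartz completion is $M\oplus P(1)\oplus P(3)$, no map between $P(1)$ and $P(3)$ factors through $\add(M)$ since $\Hom(M,P(1))=\Hom(M,P(3))=0$, and the reduction is $\End(P(1)\oplus P(3))$, the $2$-Kronecker algebra; equivalently $q_{\K Q}$ restricted to $\{y_1=y_2\}$ is $(t-s)^2\geq 0$. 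So the paper's claimed counterexample to (2) is in fact tame (the four-vertex counterexample to (1) does work). Second, your observation that (1) fails for the zero pair is a genuine, if minor, defect of the statement as printed; the paper's proof silently assumes $e_M\neq 0$, and the hypothesis $|M|+|R|\geq 1$ duly reappears in \Cref{prop:MinimalLemJassoRed}. Third, the one step you should tighten is the reflection argument: an arbitrary word in simple reflections is an isometry between the non-symmetric Euler forms of $Q$ and of a reorientation only when the word is admissible, which is exactly what postprojectivity supplies; a cleaner route is to use that the Coxeter transformation $\Phi$ is an Euler-form isometry with $\Phi^{-r}[P(v)]=[\tau^{-r}P(v)]$, so that $[M_1]^{\perp}$ is isometric to the hyperplane $\{y_v=0\}$, on which $q$ is the Tits form of $Q\setminus\{v\}$. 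With that adjustment your proof is complete, and it has the advantage of not needing the postprojectivity of the Bongartz completion nor Kerner's theorem that hyperbolically concealed algebras are hyperbolic, at the cost of invoking perpendicular-category machinery the paper does not otherwise use.
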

\begin{proof}
    We start by proving \eqref{lem:red_minwild_quiver1}.
    Suppose first that $R=0$.
    Let $M^+$ be the Bongartz completion of $M$. Since we are working over a hereditary $\K$-algebra, we can consider the universal extension of $\Lambda$ in $\add(M)$
    \begin{equation*}
        \begin{tikzcd}
            0 \arrow[r] & \Lambda \arrow[r] & M' \arrow[r] & M'' \arrow[r] & 0,
        \end{tikzcd}
    \end{equation*}
    and assert that $\add(M^+) = \add({M'\oplus M''})$ by \Cref{lem:BonSeq}. It follows from the extension-closure of the postprojective component that $M^+$ is a postprojective ($\tau$-)tilting $\K Q$-module. The $\tau$-tilting reduction of $\K Q$ with respect to $(M,0)$ is given by $C_M=\End_{\K Q}({M^+})\op/(e_M)$, where $e_M$ is the idempotent endomorphism corresponding to $M$. We have that $C_M$ is tame for the reason that $\End_{\K Q}(M^+)\op$ is a hyperbolic (concealed) $\K$-algebra and $C_M$ is obtained through factoring out a non-zero sum of primitive idempotents.

    In the case where $R\neq 0$, the $\tau$-tilting reduction of $\K Q$ with respect to $(M,\,R)$ is given by the $\tau$-tilting reduction of $\K Q / (e_R)$ with respect to $(M,\,0)$. The hyperbolicity of $\K Q $ makes $\K Q / (e_R)$ tame hereditary $\K$-algebra. Repeating the argument in the previous paragraph shows that the $\tau$-tilting reduction of $\K Q / (e_R)$ with respect to $(M,\,0)$ is a factor of a representation-finite or tame concealed $\K$-algebra. In particular, it is tame, and we are done.

    To prove \eqref{lem:red_minwild_quiver2}, we employ precisely the same techniques as in our proof of \eqref{lem:red_minwild_quiver1}. In the steps where we consider $\K$-algebras of the form $C/(e)$, where $e$ is an idempotent in $C$, the $\K$-algebra $C$ will be tame concealed (as opposed to hyperbolically concealed). Since tame concealed $\K$-algebras are \textit{minimally representation infinite}, in the sense that every proper quotient is representation-finite \cite[Proposition 6]{Bon17}, we may indeed repeat the proof above, \textit{mutatis mutandis}, in order to prove \eqref{lem:red_minwild_quiver2}.
\end{proof}

We note that the assumption of postprojectivity in \Cref{lem:red_minwild_quiver} is necessary for the conclusions to hold. A counterexample for \eqref{lem:red_minwild_quiver1} would be the following quiver
\[
\begin{tikzpicture}
    \node[] (A) at (0,0) {$\bullet$};
    \node[below=.1pt of A] [scale=.8] {$1$};
    \node[] (B) at (1,1) {$\bullet$};
    \node[above=.1pt of B] [scale=.8] {$2$};
    \node[] (C) at (2,0) {$\bullet$};
    \node[below=.1pt of C] [scale=.8] {$3$};
    \node[] (D) at (3,0) {$\bullet$};
    \node[below=.1pt of D] [scale=.8] {$4$};
    \draw[-latex] (A)--(B);
    \draw[-latex] (A)--(C);
    \draw[-latex] (B)--(C);
    \draw[-latex] (D)--(C);
\end{tikzpicture}
\]
with the $\tau$-rigid pair $(M,\,0)$, where $M$ is the indecomposable representation with dimension vector $(1,0,1,0)$. For \eqref{lem:red_minwild_quiver2}, consider the quiver obtained by removing the vertex $4$, and the indecomposable representation with dimension vector $(1,0,1)$.

\begin{lemma}\label{lem:MinimalLemIdemRed}
Let $B$ be a concealed $\K$-algebra of type $Q$. 
    \begin{enumerate}
    \item\label{lem:MinimalLemIdemRed2} If $Q$ is euclidean and $e$ is a non-zero primitive idempotent, then $B/(e)$ is representation-finite.
        \item\label{lem:MinimalLemIdemRed3} If $Q$ is hyperbolic and $e=e_1+e_2$ is a sum of distinct primitive idempotents in $B$, then $B/(e)$ is representation-finite.
    \end{enumerate}
\end{lemma}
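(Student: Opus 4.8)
The plan is to treat the two assertions separately, deducing \eqref{lem:MinimalLemIdemRed3} from \eqref{lem:MinimalLemIdemRed2}. For \eqref{lem:MinimalLemIdemRed2} the argument is essentially immediate: when $Q$ is euclidean the algebra $B$ is tame concealed, and a nonzero primitive idempotent $e$ generates a nonzero proper ideal, so $B/(e)$ is a nonzero proper quotient of $B$. Since any proper quotient of a tame concealed $\K$-algebra is representation-finite \cite[Proposition~6]{Bon17} — the fact already invoked after \Cref{thm:tautfiniteposets} — this settles \eqref{lem:MinimalLemIdemRed2}.

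For \eqref{lem:MinimalLemIdemRed3} I would first set up a two-step reduction. Taking $e_1,e_2$ orthogonal, one has $e_1=e_1e\in(e)$ and $e_2=e_2e\in(e)$, whence $(e)=(e_1)+(e_2)$ and therefore
\[
    B/(e)\;\cong\;\bigl(B/(e_1)\bigr)\big/(\overline{e_2}),
\]
with $\overline{e_2}$ the nonzero image of $e_2$. In light of \eqref{lem:MinimalLemIdemRed2} it then suffices to show that one of $B/(e_1),B/(e_2)$ is either representation-finite or tame concealed: in the first case $B/(e)$ is a quotient of a representation-finite algebra, and in the second it is a nonzero proper quotient of a tame concealed algebra, hence representation-finite by \eqref{lem:MinimalLemIdemRed2}.

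To identify such a quotient, recall that each $e_i$ corresponds to an indecomposable postprojective summand $T_i=\tau^{-r_i}P(v_i)$ of $T$. When $Q\setminus\{v_i\}$ is connected and representation-infinite, \Cref{lem:HU89} and \Cref{lem:remove_vertex_in_tau_orbit} show that $B/(e_i)$ is concealed of type $Q\setminus\{v_i\}$; as $Q$ is hyperbolic, this connected proper subquiver is euclidean, so $B/(e_i)$ is tame concealed. If instead $Q\setminus\{v_i\}$ is connected of Dynkin type, the construction of \Cref{lem:remove_vertex_in_tau_orbit} still produces a tilted algebra of type $Q\setminus\{v_i\}$, which is representation-finite since tilted algebras of Dynkin type are (an immediate consequence of \Cref{setup:concealed}). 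The orbit count in \Cref{lem:postprojective_tilting_modules_of_type_Atildetilde} controls the only situation in which $T_1$ and $T_2$ can share a $\tau$-orbit, namely type $\qAtildetilde$, and lets me arrange $v_1\neq v_2$.

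The hard part will be ruling out the degenerate case in which deleting either idempotent leaves a surviving euclidean (tame concealed) direct factor — that is, where both $v_1$ and $v_2$ are cut vertices of $Q$, each separating off a euclidean component, with the second vertex lying outside the first component; in that configuration $Q\setminus\{v_1,v_2\}$ would retain a tame concealed factor and $B/(e)$ would be representation-infinite, so the truth of the lemma hinges on this never occurring. This is a purely combinatorial statement about hyperbolic quivers, and I expect to settle it by running through Kerner's classification of hyperbolic quivers (equivalently, Unger's list of hyperbolically concealed algebras), verifying in each type — with the asymmetric fork of $\qDtildetilde_n$ and the trivalent cores of $\qEtildetilde_6,\qEtildetilde_7,\qEtildetilde_8$ being the delicate ones — that $v_1$ and $v_2$ may always be removed in an order leaving no euclidean component. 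Once this combinatorial check is in place, the displayed two-step reduction together with \eqref{lem:MinimalLemIdemRed2} yields that $B/(e)$ is representation-finite.
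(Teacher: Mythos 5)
Your overall strategy coincides with the paper's: prove \eqref{lem:MinimalLemIdemRed2} by quoting that proper quotients of tame concealed algebras are representation-finite, then factor $B/(e)$ as $(B/(e_1))/(\overline{e_2})$ and analyse $Q\setminus\{v\}$ for the vertex $v$ whose $\tau$-orbit contains the summand $T_1$ corresponding to $e_1$. However, your proof is not complete as written, because the step you yourself identify as the crux --- ruling out a surviving euclidean component after deleting a vertex --- is left as a promissory note (``I expect to settle it by running through Kerner's classification''). This is precisely how the paper closes the argument, and the outcome is cleaner than you fear: inspecting Kerner's list shows that for a hyperbolic quiver $Q$, whenever $Q\setminus\{v\}$ is representation-infinite it is automatically \emph{connected}. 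So a euclidean direct factor can never survive a single deletion, and no choice of ordering of $v_1,v_2$ is needed; one simply treats $B/(e_1)$ and splits into the cases ``$Q\setminus\{v\}$ representation-infinite (hence connected euclidean)'' and ``$Q\setminus\{v\}$ representation-finite''.

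Two further points in your second case need repair. First, when $Q\setminus\{v\}$ is representation-finite, \Cref{lem:remove_vertex_in_tau_orbit} does not apply (it assumes $Q'$ connected representation-infinite), and the Happel--Unger construction does not in general hand you a tilting \emph{module} over $\K Q'$: the paper instead produces a tilting \emph{complex} $\tilde N^\bullet$ over $\K Q'$ with $\End_{\Dbmod{\K Q'}}(\tilde N^\bullet)\cong B/(e_1)$, and concludes that $B/(e_1)$ is derived equivalent to a representation-finite hereditary algebra, hence representation-finite. Your claim that one obtains a tilted algebra of type $Q\setminus\{v\}$ is stronger than what the construction yields and is not justified by \Cref{setup:concealed}. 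Second, your case analysis only covers $Q\setminus\{v_i\}$ \emph{connected} of Dynkin type, but for hyperbolic quivers two distinct vertices may both disconnect the quiver (e.g.\ two internal arm vertices of $\qEtildetilde_7$), in which case both deletions are disconnected unions of Dynkin quivers and neither of your two covered cases applies. The derived-equivalence argument handles this uniformly, since $\K Q'$ is representation-finite hereditary whether or not $Q'$ is connected.
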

\begin{proof}
    The assertion in \eqref{lem:MinimalLemIdemRed2} is shown by Happel--Vossieck \cite[Theorem 2]{HV83}. We deduce the assertion in \eqref{lem:MinimalLemIdemRed3} from those in \eqref{lem:MinimalLemIdemRed2} above and \cref{lem:remove_vertex_in_tau_orbit}. We first consider $B/(e_1)$. Pick a postprojective tilting $\K Q$-module $T$ such that $B = \End_{\K Q}(T)\op$. Then $e_1$ corresponds to a summand $T_1$ of $T$, and let $v$ be the vertex in $Q$ such that $T_1$ is in the $\tau$-orbit of $P(v)$. If $Q \setminus \{v\}$ is connected representation-infinite, then by \cref{lem:remove_vertex_in_tau_orbit} $B/(e_1)$ is tame concealed, and hence by \eqref{lem:MinimalLemIdemRed2} we get that $B/(e) \cong \frac{B / (e_1)}{(e_2)}$ is representation finite.

    If $Q' \defeq Q\setminus\{v\}$ is representation-finite, then following the construction in \cite{HU89}, we can construct a tilting $\K Q'$-complex $\tilde N^\bullet$, such that $\End_{\Dbmod{\K Q'}}(\tilde N^\bullet)\op \cong B/(e_1)$. Now, the $\K$-algebra $B/(e_1)$ is derived equivalent to a representation-finite hereditary $\K$-algebra, hence representation-finite, and therefore so is $B/(e)$.

    By examining the list of hyperbolic quivers \cite[Remark following Theorem~4.1]{Ker88}, we see that whenever $Q\setminus\{v\}$ is representation-infinite it is also connected, so this covers all the cases, hence $B/(e)$ is representation-finite.
\end{proof}

We are now ready to prove the main result of this appended section.

\begin{theorem}\label{prop:MinimalLemJassoRed}
    Let $Q$ be a hyperbolic quiver, and let $B=\End_{\K Q}(T)\op$ be a concealed $\K$-algebra of type $Q$.
    For $(M,\,R)\in\taurigidpair{B}$, where $M$ is postprojective, the following assertions hold:
    \begin{enumerate}
        \item\label{lem:MinimalLemJassoRed0} Let $\mathcal{Y}_T$ be the torsion class in $\Modf(B)$ as defined in \Cref{setup:concealed}. Then the Bongartz complement $M^+/M$ of $(M,\,R)$ is in $\cY_T$.
        \item\label{lem:MinimalLemJassoRed1} If $|M|+|R|\geq 1$, then the $\tau$-tilting reduction of $B$ with respect to $(M,\,R)$ is tame.
        \item\label{lem:MinimalLemJassoRed2} If $|M|+|R|\geq 2$, then the $\tau$-tilting reduction of $B$ with respect to $(M,\,R)$ is representation-finite.
    \end{enumerate}
\end{theorem}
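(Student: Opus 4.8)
The overall plan is to transfer the whole reduction from the hyperbolically concealed algebra $\Lambda$ to the hereditary algebra $H=\K Q$ along the Brenner--Butler equivalence of \Cref{setup:concealed}, after which the hypothesis that $Q$ is hyperbolic lets me invoke \Cref{lem:red_minwild_quiver} and \Cref{lem:MinimalLemIdemRed}. Assertion \eqref{lem:MinimalLemJassoRed0} is exactly what makes this transfer possible. Recall that $\Hom_H(T,-)$ restricts to an equivalence $\Gen T\xrightarrow{\sim}\cY_T$ with quasi-inverse $T\otimes_\Lambda-$, and that $\Lambda$ itself lies in $\cY_T$, its indecomposable projective summands being the modules $\Hom_H(T,T_i)$ with $T_i\in\add T\subseteq\Gen T$. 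Once $M^+/M\in\cY_T$, writing $N\defeq T\otimes_\Lambda(M^+/M)\in\Gen T$ yields $C_{(M,R)}=\End_\Lambda(M^+/M)\cong\End_H(N)$ by full faithfulness. Since $\Hom_H(T,-)$ is exact on short exact sequences inside $\Gen T$, rigidity is preserved, so $N$ is a partial tilting $H$-module with $|N|=|M^+/M|=|\Lambda|-|M|-|R|$ indecomposable summands.

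\textbf{Assertion \eqref{lem:MinimalLemJassoRed0}.} To prove $M^+/M\in\cY_T$ I would start from the right exact sequence $\Lambda\xrightarrow{f}L\to M'\to 0$ of \Cref{lem:BonSeq}, in which $f$ is a minimal left $\add(M^+)$-approximation, $L\in\add(M^+)$ contains every indecomposable summand of the Bongartz complement (the complement summands being split projective in $\Gen M^+$), and $M'\in\add M$. Using that $\Lambda\in\cY_T$ and that $\cY_T$ is closed under submodules and extensions, I would decompose $L=L_{\cY}\oplus L_{\cX}$ along the split torsion pair $(\cX_T,\cY_T)$ and argue that the $\cX_T$-summands of $L$ can only be summands of $M$. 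Here one exploits that $\cX_T$ has only finitely many indecomposables, being the image $\Ext^1_H(T,\cF_T)$ of the finite torsion-free class $\cF_T=T^{\perp}$, and that these finitely many modules cannot occur as \emph{new} Ext-projective summands produced by a left approximation of $\Lambda\in\cY_T$. This forces the complement into $\cY_T$.

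\textbf{Assertions \eqref{lem:MinimalLemJassoRed1} and \eqref{lem:MinimalLemJassoRed2}.} Granting \eqref{lem:MinimalLemJassoRed0}, I would first verify that $N$ is \emph{postprojective}: it is the Brenner--Butler lift of a Bongartz complement, and as the construction lifts to an approximation of the postprojective module $T$ inside $\Gen T$, extension-closure of the postprojective component keeps $N$ postprojective. Completing $N$ to a postprojective tilting module $\widetilde N=N\oplus N'$ with $|N'|=|M|+|R|$, the algebra $\widetilde B\defeq\End_H(\widetilde N)$ is concealed of type $Q$, and $C_{(M,R)}\cong\End_H(N)$ is identified with the $\tau$-tilting reduction of $H$ along the postprojective pair $(N',0)$, namely the quotient $\widetilde B/(e_{N'})$ by the idempotent of $N'$, exactly as in the proof of \Cref{lem:red_minwild_quiver}. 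If $|M|+|R|\geq 1$, then $e_{N'}\neq 0$, and deleting the corresponding vertices of the hyperbolic quiver $Q$ yields, via \Cref{lem:remove_vertex_in_tau_orbit}, a tame concealed (or representation-finite) algebra, giving \eqref{lem:MinimalLemJassoRed1}. If $|M|+|R|\geq 2$, then $e_{N'}$ is a sum of at least two primitive idempotents, and \Cref{lem:MinimalLemIdemRed}\eqref{lem:MinimalLemIdemRed3} shows that $\widetilde B/(e_{N'})$ is representation-finite, which is \eqref{lem:MinimalLemJassoRed2}.

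\textbf{Main obstacle.} The genuine difficulty is concentrated in \eqref{lem:MinimalLemJassoRed0} together with the postprojectivity of the lift $N$: one must control how the Bongartz completion interacts with the split torsion pair $(\cX_T,\cY_T)$, in particular ruling out that summands of $M$ lying in the finite class $\cX_T$ pollute the complement, and then confirm that the lift lands in the postprojective component so that $\widetilde B$ is genuinely concealed. Once the complement is pinned inside $\cY_T$ and shown to be postprojective, the representation-type bookkeeping through \Cref{lem:red_minwild_quiver} and \Cref{lem:MinimalLemIdemRed} is routine.
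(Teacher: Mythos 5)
Your overall strategy---transferring the reduction to $H=\K Q$ along the Brenner--Butler equivalence and then invoking \Cref{lem:red_minwild_quiver} and \Cref{lem:MinimalLemIdemRed}---is the same as the paper's, but your proof of assertion \eqref{lem:MinimalLemJassoRed0} has a genuine gap, and this is where the real content of the theorem sits. You correctly reduce to showing that the $\cX_T$-summands of the middle term $L$ of the sequence from \Cref{lem:BonSeq} already lie in $\add(M)$, but your justification --- that $\cX_T$ has only finitely many indecomposables and that these ``cannot occur as new Ext-projective summands produced by a left approximation of $\Lambda\in\cY_T$'' --- is an assertion, not an argument. Finiteness of $\cX_T$ does nothing to prevent one of its finitely many indecomposables from showing up as a complement summand, and since $\cX_T$ is the torsion class of the split pair while $\Lambda$ is torsion-free, $\Hom_\Lambda(\Lambda,X)\cong X\neq 0$ for $0\neq X\in\cX_T$, so such summands cannot be excluded on formal vanishing grounds either. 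The paper's mechanism is different: apply $\Tor_1^{\Lambda}(T,-)$ to $\Lambda\to L_{\cX}\oplus L_{\cY}\to N\to 0$; the terms for $\Lambda$ and $L_{\cY}$ vanish because both lie in $\cY_T$, which yields $\Tor_1^{\Lambda}(T,L_{\cX})\cong\Tor_1^{\Lambda}(T,N)$ with $N\in\add(M)$, and since $\Tor_1^{\Lambda}(T,-)$ is an equivalence $\cX_T\to\cF_T$ this forces $L_{\cX}\in\add(M)$. You need this (or an equivalent homological computation); the finiteness of $\cX_T$ is a red herring.

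For \eqref{lem:MinimalLemJassoRed1} and \eqref{lem:MinimalLemJassoRed2} your route is close in spirit to the paper's, but two identifications are off. First, $C_{(M,R)}=\End_\Lambda(M^+)/[M]$ is not $\End_\Lambda(M^+/M)\cong\End_H(N)$ but the further quotient by the ideal of maps factoring through $\add(M_{\cY})$, where $M_{\cY}$ is the part of $M$ lying in $\cY_T$ (the $\cX_T$-part of $M$ contributes nothing because $\Hom_\Lambda(\cX_T,\cY_T)=0$); the paper accordingly writes $C_{(M,R)}\cong\End_{\K Q}\bigl(T\otimes_{\Lambda}(M^+/M)\bigr)/[T\otimes_{\Lambda}M_{\cY}]$. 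This omission is harmless for tameness, but it makes your later claim $C_{(M,R)}\cong\widetilde B/(e_{N'})$ inconsistent with your earlier claim $C_{(M,R)}\cong\End_H(N)$ unless $[N']=0$. Second, for a completion $\widetilde N=N\oplus N'$ to a tilting module, $\End_H(N)$ is the corner algebra $e_N\widetilde Be_N$, whereas $\widetilde B/(e_{N'})\cong\End_H(N)/[N']$; these are different algebras in general, and $\widetilde N$ is not the Bongartz completion of $(N',0)$ for an arbitrary choice of $N'$, so your identification of $C_{(M,R)}$ with the $\tau$-tilting reduction of $H$ at $(N',0)$ also needs justification. Pinning down exactly which quotient of which concealed endomorphism algebra $C_{(M,R)}$ is---keeping track of $M_{\cX}$, $M_{\cY}$ and $R$ separately---is precisely the bookkeeping that assertion \eqref{lem:MinimalLemJassoRed0} is there to enable, and it cannot be waved through.
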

\begin{proof}
    We first address our claim in \eqref{lem:MinimalLemJassoRed0}.
    By \Cref{lem:BonSeq}, we have a right exact sequence of the form
    \begin{equation*}
    \begin{tikzcd}
        B \arrow[r]&
        L_{\cX}\oplus L_{\cY} \arrow[r]& N \arrow[r]& 0,
    \end{tikzcd}
    \end{equation*}
    where $N\in\add(M)$ and $L_{\cX}\oplus L_{\cY}$ is a not necessarily basic object in $\add(M^+)$ with $L_{\cX}\in \cX_T$ and $L_{\cY}\in \cY_T$.
    Applying $\Tor^{B}_{1}({T},\,{-})$ gives a right exact sequence in $\K Q$
    \begin{equation*}
    \begin{tikzcd}
        0 \arrow[r]& \Tor^{B}_{1}({T},\,{L_{\cX}}) \arrow[r]& \Tor^{B}_{1}({T},\,{N}) \arrow[r]& 0,
    \end{tikzcd}
    \end{equation*}
    having used that $\Tor^{B}_{1}({T},\,{B})$ and $\Tor^{B}_{1}({T},\,{L_{\cY}})$ vanish, since both $B$ and $L_{\cY}$ are in $\cY_T$.
    Hence, we have that $\Tor^{B}_{1}({T},\,{L_{\mathcal{X}}}) \simeq \Tor^{B}_{1}({T},\,{N})$. From the Brenner--Butler Theorem (\cref{setup:concealed}), we deduce that ${L_{\cX}} \simeq {N}$. Thus, we have that $L_{\cX}\in \add(M)$. We can therefore take $L_{\cX}$ to be $0$ if we insist that the Bongartz completion be basic. This proves that $M^+/M\in\cY_T$, as claimed.

    We move on to prove \eqref{lem:MinimalLemJassoRed1} {and \eqref{lem:MinimalLemJassoRed2}}.
    Since $M$ is postprojective, all of its indecomposable direct summands are in $\cY_T$.
    %\revised{By \eqref{lem:MinimalLemJassoRed0}, we have that $M^+/M\in \cY$. Henceforth, we denote this $\Lambda$-module by $M^+_{\cY}$.}
    By \eqref{lem:MinimalLemJassoRed0}, the Bongartz completion $M^+$ is in $\cY_T$.
    Transferring endomorphisms along the equivalence $T\otimes_{B} - \colon \cY_T \to \cT_T$ provides an isomorphism
    \begin{equation}\label{eq:EndT'}
       C_{(M,R)}\coloneqq\End_{B}(M^+)\op/[M] \simeq \End_{\K Q}\left({T\otimes_{B} M^+/M}\right)\op/[T\otimes_{B}M].
    \end{equation}
    The $\K Q$-module $T'=T\otimes_{B} {M^+_{\cY}} \in\cT_T$ is clearly partial tilting, and even tilting since the Brenner--Butler equivalence  $T\otimes_{B} - \colon \cY_T \to \cT_T$ preserves the number of indecomposable direct summands. Since $T'$ becomes the Bongartz completion of $T\otimes_{B}M$, the right hand side of \eqref{eq:EndT'} is then the $\tau$-tilting reduction of $\K Q$ with respect to $T\otimes_{B}M$, whence it follows from \Cref{lem:red_minwild_quiver}\eqref{lem:red_minwild_quiver1} that $C_{(M,R)}$ is tame.
    %\revised{If it is tilting, then $M_{\cX}$ must be zero in order for $T'$ to have the right number of indecomposable direct summands, and $M_{\cY}$ must be non-zero in order for $|M|+|R|\geq 1$. Since $T'$ becomes the Bongartz completion of $T\otimes_{B}M_{\cY}$, the right hand side of \eqref{eq:EndT'} is then the $\tau$-tilting reduction of $\K Q$ with respect to $T\otimes_{B}M_{\cY}$, whence it follows from \Cref{lem:red_minwild_quiver}\eqref{lem:red_minwild_quiver1} that $C_{(M,R)}$ is tame. On the other hand, when $T'$ is not tilting, its endomorphism $\K$-algebra is nevertheless tilted \cite[Corollary III.6.5]{Hap88}. Moreover, one can follow the proof of the cited result to show that $\End_{\K Q}(T')$ must be tame, since it is obtained from a sequence of $\tau$-tilting reductions of hereditary $\K$-algebras, letting us apply \Cref{lem:red_minwild_quiver}\eqref{lem:red_minwild_quiver1} again. The right hand side of \eqref{eq:EndT'} is thus a quotient of a tame $\K$-algebra, and remains tame. Having shown that $C_{(M,R)}$ is tame whether $T'$ is tilting or not, we conclude the proof of \eqref{lem:MinimalLemJassoRed1}.} 
    {Our claim in \eqref{lem:MinimalLemJassoRed2} is proved using the same reasoning as above, applying \Cref{lem:red_minwild_quiver}\eqref{lem:red_minwild_quiver2} in place of \Cref{lem:red_minwild_quiver}\eqref{lem:red_minwild_quiver1}.} 
    
\end{proof}

\bibliographystyle{amsalpha}
\bibliography{refs}

\end{document}